\newtheorem{theorem}{Theorem}
\newtheorem{lemma}[theorem]{Lemma}
\newtheorem{corollary}[theorem]{Corollary}
\newtheorem{proposition}[theorem]{Proposition}
\newtheorem{example}[theorem]{Example}
\newtheorem{remark}[theorem]{Remark}
\numberwithin{equation}{section}
\newcommand{\tto}{\twoheadrightarrow}
\begin{document}

\title[Simple transitive 2-representations and Drinfeld center]{simple
transitive $2$-representations and Drinfeld center for some finitary $2$-categories}

\author{Xiaoting Zhang}

\begin{abstract}
We classify all simple transitive $2$-representations for two classes of
finitary $2$-categories associated with tree path algebras and also for
one class of fiat $2$-categories associated with truncated polynomial
rings. Additionally, we compute the Drinfeld centers for all these $2$-categories.

\end{abstract}

\maketitle

\section{Introduction and description of results}

Motivated by the results of \cite{BFK,Kh}, higher representation theory, as the study of
$2$-rep\-re\-sen\-ta\-ti\-ons of additive $2$-categories, originated from the papers \cite{CR,Ro0}.
Further developments in \cite{KL,Ro} motivated development of abstract $2$-representation
theory of finitary $2$-categories in the series \cite{MM1,MM2,MM3,MM4,MM5,MM6} of papers
which formulated and investigated an abstract general setup for the study of natural
$2$-analogues of finite dimensional algebras, called {\em finitary} $2$-categories.
We refer to \cite{Maz} for a general overview.

The ``correct'' $2$-analogue of the notion of an irreducible representation is the notion of
{\em simple transitive} $2$-representation as defined in \cite{MM5}. These $2$-representations
for ``building blocks'' for all $2$-representations, however, the building procedure itself is
more complicated than in the classical representation theory as no good analogue of homological
algebra is available in the $2$-setting for now. Nevertheless, the question of classification
of simple transitive $2$-representations is natural and provides the first layer of information
during the study of a given finitary $2$-category. This question was answered in \cite{MM5,MM6}
for a certain class of finitary (even fiat) $2$-categories, where it was shown that, under
some mild combinatorial assumptions, each simple transitive $2$-representation is equivalent to
a so-called {\em cell $2$-representation}, a class of $2$-representations defined in \cite{MM1}
and further studied in \cite{MM2} and \cite{MM3}. Meanwhile, many new example of finitary
(but not necessarily fiat) $2$-categories were constructed, see for example \cite{GM1,GM,Xa,Zh}
and references therein. In the general case the problem of classification of simple transitive
$2$-representation is wide open. It is not even known whether, for a given finitary $2$-category,
the number of equivalence classes of simple transitive $2$-representations is always finite.

The center of an algebra plays, naturally, a central role in the representation theory of this
algebra. For $2$-categories, an appropriate $2$-analogue of a center is the so-called
{\em Drinfeld center}, as defined in \cite{JS,Maj,MS} in various setups. This is an important
invariant of a $2$-category which is, however, not easy to determine.

The aim of the present article is:
\begin{itemize}
\item to construct new examples of finitary $2$-categories;
\item to classify simple transitive $2$-representations for these new examples and also for
some examples which already appeared in the literature;
\item to describe the Drinfeld center in all these examples.
\end{itemize}

The article is organized as follows: In Section~\ref{s2} we collect all necessary preliminaries
on finitary and fiat $2$-categories. Section~\ref{s3} provides a classification of equivalence
classes of simple transitive $2$-representations for the finitary $2$-category associated to a
tree path algebra, as considered in~\cite{Zh}. The Drinfeld center of this $2$-category is also
described and turns out to be very small.
In Section~\ref{s4}, we define a new finitary $2$-category using functors on certain
quiver algebras, given by tensoring with identity bimodules, the left action of which is
twisted by a class of algebra endomorphisms. We calculates the cell structure of these
$2$-categories in the case when the quiver is a Dynkin diagram of type $A$ with uniform orientation.
The Drinfeld center of this new finitary $2$-category turns out to be quite big. In Section~\ref{s5},
we consider the fiat $2$-category given by twisting functors corresponding to certain
algebra automorphisms of truncated polynomial rings.
In this case we get a fiat $2$-category with unique left cell (resp. right cell) which does not
satisfy the strong regularity assumption from \cite{MM1,MM5}. In particular, the main approach of
\cite{MM5} is not applicable for this $2$-category, however, we reduce the problem of description of
its simple transitive $2$-representations to another result in \cite{MM5}. Nevertheless, we classify all simple
transitive $2$-representations for this $2$-category and describe its Drinfeld center.
It turns out that in this case there are many simple transitive $2$-representations which are
not cell $2$-representations. Moreover, it turns out that the Drinfeld center of this $2$-category is
rather non-trivial.

\textbf{Acknowledgment.}
The paper was written during a visit of the author to Uppsala University, whose hospitality is
gratefully acknowledged. The visit was supported by China Scholarship Council. The author would
like to thank Volodymyr Mazorchuk for useful discussions and valuable comments.
The author is also grateful to the referee for
numerous comments and suggestions which helped to improve the paper greatly.

\section{Preliminaries}\label{s2}

\subsection{Notation}\label{s2.00}

Throughout the paper we will work over a fixed field $\Bbbk$ if not stated explicitly. For simplicity, we assume that it is
algebraically closed.

We let $\mathbf{Cat}$ denote the category of small categories. By a $2$-category we mean a category
enriched over $\mathbf{Cat}$. Thus, a $2$-category consists of
\begin{itemize}
\item objects denoted by $\mathtt{i},\mathtt{j},\ldots$;
\item $1$-morphisms denoted by $F,G,\ldots$;
\item $2$-morphisms denoted by $\alpha,\beta,\ldots$;
\item identity $1$-morphisms $\mathbbm{1}_\mathtt{i}$, for $\mathtt{i}\in\mathscr{C}$;
\item identity $2$-morphisms $\mathrm{id}_F$, for a $1$-morphism $F$;
\item composition $\circ$ of $1$-morphisms;
\item horizontal composition $\circ_0$ of $2$-morphisms;
\item vertical composition  $\circ_1$ of $2$-morphisms.
\end{itemize}
For simplicity, given a $1$-morphism $F$ and a composable $2$-morphism $\alpha$,
we write $F(\alpha)$ for $\mathrm{id}_F\circ_0\alpha$ and $\alpha_F$ for $\alpha\circ_0\mathrm{id}_F$.

\subsection{Finitary and fiat $2$-categories}

An additive $\Bbbk$-linear category is called \textit{finitary} if it is idempotent split, has finitely many
isomorphism classes of indecomposable objects and finite dimensional $\Bbbk$-vector spaces of morphisms.
We denote by $\mathfrak{A}_{\Bbbk}^f$ the $2$-category whose objects are finitary additive $\Bbbk$-linear
categories, $1$-morphisms are additive $\Bbbk$-linear functors and $2$-morphisms are natural
transformations of functors.

A \textit{finitary} $2$-category (over $\Bbbk$) is defined to be a $2$-category $\mathscr{C}$ such that:
\begin{itemize}
\item it has finitely many objects;
\item for each pair $\mathtt{i}$, $\mathtt{j}$ of objects, the category $\mathscr{C}(\mathtt{i},\mathtt{j})$
lies in $\mathfrak{A}_{\Bbbk}^f$ and horizontal composition is biadditive and $\Bbbk$-linear;
\item all identity $1$-morphisms are indecomposable.
\end{itemize}
We refer to \cite{Le,Mc} for generalities on abstract $2$-categories and to
\cite{MM1,MM2,MM3,MM4,MM5,MM6} for more information on finitary $2$-categories.

A finitary $2$-category is called {\em fiat}, see \cite{MM1}, provided that
\begin{itemize}
\item there is a weak involution $\ast:\mathscr{C}\to\mathscr{C}^{\mathrm{op}}$, where
$\mathscr{C}^{\mathrm{op}}$ denote the opposite category in which the directions of both $1$-morphisms
and $2$-morphisms are reversed;
\item for any pair $\mathtt{i}, \mathtt{j}\in\mathscr{C}$ and any $1$-morphism
$F\in\mathscr{C}(\mathtt{i},\mathtt{j})$, there exist $2$-morphisms
$\alpha:F\circ F^\ast\to\mathbbm{1}_\mathtt{j}$ and $\beta:\mathbbm{1}_\mathtt{i}\to F^\ast\circ F$ such that
\begin{displaymath}
\alpha_F\circ_1F(\beta)=\mathrm{id}_F \quad \text{and}\quad F^\ast(\alpha)\circ_1\beta_{F^\ast}=\mathrm{id}_{F^\ast}.
\end{displaymath}
\end{itemize}

\subsection{2-representations}

Let $\mathscr{C}$ be a finitary $2$-category. A $2$-representation of $\mathscr{C}$ is a strict $2$-functor
from $\mathscr{C}$ to $\mathbf{Cat}$. A {\em finitary} $2$-representation of $\mathscr{C}$
is a strict $2$-functor from $\mathscr{C}$ to $\mathfrak{A}_{\Bbbk}^f$. We will usually denote
$2$-representations by $\mathbf{M}, \mathbf{N}, \ldots$. For each $\mathtt{i}\in \mathscr{C}$, we denote by $\mathbf{P}_{\mathtt{i}}$ the i-th {\em principal} $2$-representation $\mathscr{C}(\mathtt{i},{}_-)$.
All finitary $2$-representations of $\mathscr{C}$ form a $2$-category, denoted by $\mathscr{C}$-afmod,
with $2$-natural transformations as $1$-morphisms and modifications as $2$-morphisms, see \cite{Le,MM3}.

Given two $2$-representations $\mathbf{M}$, $\mathbf{N}$ of $\mathscr{C}$, we say they are {\em equivalent}
if there is a $2$-natural transformation $\Phi:\mathbf{M}\to\mathbf{N}$ such that $\Phi_{\mathtt{i}}$
is an equivalence of categories for each $\mathtt{i}$.

Consider a $2$-representation $\mathbf{M}$ of $\mathscr{C}$ and assume that $\mathbf{M}(\mathtt{i})$
is additive and idempotent split for each $\mathtt{i}\in\mathscr{C}$. For any collection of objects
$X_i\in \mathbf{M}(\mathtt{i}_i)$, where $i\in I$, the additive closure of all objects of the form
$\mathbf{M}(F)X_i$, where $i\in I$ and $F$ runs through all $1$-morphisms of $\mathscr{C}$, has the
structure of a $2$-representation of $\mathscr{C}$ by restriction (see \cite{MM5}). We denote this
$2$-subrepresentation of $\mathbf{M}$ by $\mathbf{G}_{\mathbf{M}}(\{X_i:i\in I\})$.
To simplify notation, we will write $FX$ instead of $\mathbf{M}(F)X$ for any $1$-morphism $F$.

Let $\mathbf{M}$ be a finitary $2$-representation of $\mathscr{C}$. For each $1$-morphism
$F\in\mathscr{C}(\mathtt{i},\mathtt{j})$, denote by $[F]$ the matrix with non-negative integer
coefficients whose rows are indexed by isomorphism classes of indecomposable objects in
$\mathbf{M}(\mathtt{j})$, columns are indexed by isomorphism classes of indecomposable objects
in $\mathbf{M}(\mathtt{i})$ and the entry in the position $(Y,X)$ is the multiplicity of $Y$
as a direct summand of $FX$.

\subsection{Combinatorics of finitary $2$-categories}

For a finitary $2$-category $\mathscr{C}$, we denote by $\mathcal{S}_{\mathscr{C}}$ the set of
isomorphism classes of all indecomposable $1$-morphisms in $\mathscr{C}$ with an added external
zero element $0$. From \cite[Section~3]{MM2}, we see that the set $\mathcal{S}_{\mathscr{C}}$
forms a {\em multisemigroup} (for more details, see \cite{KM}), which can be equipped with several
natural preorders. For any two $1$-morphisms $F$ and $G$, we say $G\geq_L F$ in the {\em left preorder}
provided that there is a $1$-morphism $H$ such that $G$ occurs as a direct summand of $H\circ F$,
up to isomorphism. A {\em left cell} is an equivalence class for $\geq_L$. Analogously one defines
the {\em right} and {\em two-sided} preorders $\geq_R$ and $\geq_J$ and the corresponding
{\em right} and {\em two-sided} cells.

\subsection{$2$-ideals}

For a $2$-category $\mathscr{C}$, a {\em left} 2-{\em ideal} $\mathscr{I}$ of $\mathscr{C}$ has
the same objects as $\mathscr{C}$ and for each pair $\mathtt{i}, \mathtt{j}$ of objects we have that
$\mathscr{I}(\mathtt{i}, \mathtt{j})$ is an ideal in $\mathscr{C}(\mathtt{i}, \mathtt{j})$ such that
$\mathscr{I}$ is closed under the left horizontal multiplication with both $1$- and $2$-morphisms
in $\mathscr{C}$. {\em Right } 2-{\em ideals} and {\em two-sided ideals} (which are, simply,
called 2-{\em ideals}) can be defined similarly. For example, principal $2$-representations are left
ideals in $\mathscr{C}$.

Let $\mathbf{M}$ be a $2$-representation of $\mathscr{C}$. An ideal $\mathbf{I}$ in $\mathbf{M}$
is a collection of ideals $\mathbf{I}(\mathtt{i})$ in $\mathbf{M}(\mathtt{i})$ for each
$\mathtt{i}\in \mathscr{C}$ which are stable under the action of $\mathscr{C}$ in the sense that:
for any morphism $\eta\in \mathbf{I}$ and any $1$-morphism $F$ the morphism $\mathbf{M}(F)(\eta)$
is in $\mathbf{I}$ whenever if it is defined.

\subsection{Abelianization}

For a finitary additive $\Bbbk$-linear category $\mathcal{A}$, its {\em abelianization} is the
abelian category $\overline{\mathcal{A}}$ with objects being diagrams of the form $X\overset{\eta}{\to}Y$
for $X,Y\in \mathcal{A}$ and $\eta\in\mathcal{A}(X,Y)$ and morphisms being equivalence classes of
solid commutative diagrams of the form
\begin{displaymath}
\xymatrix{X\ar[rr]^{\eta}\ar[d]^{\tau}&&Y\ar[d]^{\zeta}\ar@{-->}[dll]_{\xi}\\
X'\ar[rr]_{\eta'}&&Y' }
\end{displaymath}
modulo the subspace spanned by diagrams for which there is $\xi$ displayed by the dashed arrow such that $\eta'\xi=\zeta$, see \cite{Fr}.
Let $P$ be a multiplicity-free direct sum of representatives of all isomorphism classes of
indecomposable objects in $\mathcal{A}$. Then, directly from the definitions, we have
$\mathcal{A}\simeq \mathrm{End}_{\mathcal{A}}(P)^{\text{op}}$-proj. Therefore
$\mathcal{A}$ always has weak kernels, c.f.~\cite[Theorem~3.4]{HH}, which implies that $\overline{\mathcal{A}}$
is abelian. We have $\overline{\mathcal{A}}\simeq \mathrm{End}_{\mathcal{A}}(P)^{\text{op}}$-mod,
see~\cite[Propsition~5.3]{AM}.

Given a finitary $2$-category $\mathscr{C}$ and a finitary $2$-representation $\mathbf{M}$ of $\mathscr{C}$,
the abelianization of $\mathbf{M}$ is the $2$-representation $\overline{\mathbf{M}}$ of $\mathscr{C}$ which
sends each $i\in \mathscr{C}$ to the category $\overline{\mathbf{M}}(\mathtt{i})$ and with the action of
$\mathscr{C}$ defined on diagrams component-wise.

\subsection{Cell $2$-representations}

Let $\mathscr{C}$ be a finitary $2$-category and $\mathcal{L}$ a left cell. Since multiplication from
the left does not change the source of the original morphism, there is
an $\mathtt{i}=\mathtt{i}_\mathcal{L}\in\mathscr{C}$ such that for any $1$-morphism
$F\in\mathcal{L}$ we have $F\in\mathscr{C}(\mathtt{i},\mathtt{j})$ for some $\mathtt{j}\in\mathscr{C}$.
For $\mathtt{j}\in\mathscr{C}$ denote by $\mathbf{N}(\mathtt{j})$ the additive closure in
$\mathbf{P}_\mathtt{i}(\mathtt{j})$ of all $1$-morphisms $F\in\mathscr{C}(\mathtt{i},\mathtt{j})$ such
that $F\geq_L \mathcal{L}$, that is, the full subcategory of
$\mathbf{P}_\mathtt{i}(\mathtt{j})$ consisting of all objects
which are isomorphic to direct summands of finite direct sums of all such $1$-morphisms $F$.
Then $\mathbf{N}$ is a $2$-subrepresentation of $\mathbf{P}_\mathtt{i}$.
By \cite[Lemma~3]{MM5}, there exists a unique maximal ideal $\mathbf{I}$ in $\mathbf{N}$ such that
it does not contain $\mathrm{id}_F$ for any $F\in\mathcal{L}$. The corresponding quotient $2$-functor $\mathbf{C}_\mathcal{L}:=\mathbf{N}/\mathbf{I}$ is called the ({\em additive}) {\em cell}
$2$-representations of $\mathscr{C}$ corresponding to  $\mathcal{L}$. The abelianization
$\overline{\mathbf{C}}_\mathcal{L}$ of $\mathbf{C}_\mathcal{L}$ is called the {\em abelian cell}
$2$-representation of $\mathscr{C}$ corresponding to  $\mathcal{L}$.

\subsection{Simple transitive $2$-representations}

Let $\mathscr{C}$ be a finitary $2$-category. A finitary $2$-representation $\mathbf{M}$ of $\mathscr{C}$
is called {\em transitive} provided that for every $\mathtt{i}$ and every non-zero object
$X\in\mathbf{M}(\mathtt{i})$ we have $\mathbf{G}_\mathbf{M}(\{X\})=\mathbf{M}$. By \cite[Lemma~4]{MM5},
each transitive $2$-representation $\mathbf{M}$ contains a unique maximal ideal $\mathbf{I}$ which does
not contain any identity morphisms apart from the one for the zero object.  Denote by $\underline{\mathbf{M}}$
the quotient of $\mathbf{M}$ by this ideal $\mathbf{I}$.

A transitive $2$-representation $\mathbf{M}$ of $\mathscr{C}$ is called {\em simple transitive} provided
that $\mathbf{I}$ is the zero ideal or, alternatively, $\mathbf{M}=\underline{\mathbf{M}}$. For
any transitive $2$-representation $\mathbf{M}$ of $\mathscr{C}$, the $2$-representation
$\underline{\mathbf{M}}$ is simple transitive and is called the {\em simple transitive quotient} of $\mathbf{M}$.

\subsection{Drinfeld center for bicategories}\label{s2.9}

By a $2$-category we always mean a {\em strict} $2$-category and the term {\em bicategory} is used
for the corresponding non-strict structure, see \cite{Le}. Note that any bicategory is biequivalent
to a $2$-category, see \cite[Section~2.3]{Le}.

The notion of Drinfeld center originates in \cite[Example~3.4]{JS} and \cite[Definition~3]{Maj}
where it was given for tensor categories, that is bicategories with one object. In \cite{MS},
E.~Meir and M.~Szymik extended the notion of Drinfeld center to cover any bicategory. As all $2$-categories
considered in this paper only have one object, it is convenient to give the original definition
for the case when $\mathscr{B}$ only has one object $\mathtt{i}$.

In the latter case, the {\em Drinfeld center} $\mathcal{Z}(\mathscr{B})$ is a category, whose objects are
pairs $(F,\Phi)$, where $F\in \mathscr{B}(\mathtt{i},\mathtt{i})$ and $\Phi$ is a natural isomorphism
from the functor $F\circ{}_-$ to the functor ${}_-\circ F$ such that
\begin{equation}\label{1w}
\Phi(\mathbbm{1}_i):F\circ \mathbbm{1}_{\mathtt{i}}\cong F\cong\mathbbm{1}_{\mathtt{i}}\circ F
\end{equation}
and
\begin{equation}\label{1c}
\Phi(K\circ H)=(\mathrm{id}_{K}\circ_0 \Phi(H) )\circ_1(\Phi(K)\circ_0 \mathrm{id}_{H})
\end{equation}
holds for any $1$-morphisms $K,H\in \mathscr{B}(\mathtt{i},\mathtt{i})$ whenever the composition
$K\circ H$ makes sense. The morphisms between any two objects $(F,\Phi)$ and $(G,\Psi)$ are given
by all morphisms $f$ in $\mathrm{Hom}_{\mathscr{B}(\mathtt{i},\mathtt{i})}(F,G)$ such that
\begin{equation}\label{2c}
(\mathrm{id}_K\circ_0 f)\circ_1\Phi(K)=\Psi(K)\circ_1(f\circ_0 \mathrm{id}_K)
\end{equation}
for all $1$-morphisms $K\in\mathscr{B}(\mathtt{i},\mathtt{i})$. The category $\mathcal{Z}(\mathscr{B})$
has the natural structure of a tensor category via
\begin{equation}\label{3c}
(F,\Phi)\circ (G,\Psi)=(F\circ G, (\Phi\circ_0 \mathrm{id})\circ_1(\mathrm{id}\circ_0 \Psi)),
\end{equation}
with the tensor unit $(\mathbbm{1}_{\mathtt{i}},e)$, where
$e(F):\mathbbm{1}_{\mathtt{i}}\circ F\cong F\cong F\circ \mathbbm{1}_{\mathtt{i}}$ for any $1$-morphism
$F\in\mathscr{B}(\mathtt{i},\mathtt{i})$. If $\mathscr{B}$ is a $2$-category, then each $e(F)$
is exactly the identity morphism $\mathrm{id}_F$.

\begin{remark}\label{re1}{\rm
If $\mathscr{B}$ is a $2$-category, then we have the following:
\begin{enumerate}[$($i$)$]
\item \label{re12} Condition \eqref{1w} turns to $\Phi(\mathbbm{1}_i)=\mathrm{id}_F$,
which is redundant since it can be deduced from \eqref{1c} and
the fact that $\Phi(\mathbbm{1}_i)$ is an isomorphism, see \cite[Lemma~3.2]{Mug}.
\item The product $K^n:=\underbrace{K\circ K\circ \cdots \circ K}_{n\text{ factors}}$
is well-defined, for any $1$-morphism $K\in \mathscr{B}(\mathtt{i},\mathtt{i})$ and any positive integer $n$,
moreover,  from \eqref{1c} it follows that
\begin{multline}\label{e5}
\Phi(K\circ K\circ \cdots \circ K)=\\
(\mathrm{id}_{K}\circ _0 \mathrm{id}_{K}\circ_0  \cdots \circ_0\mathrm{id}_{K}\circ_0  \Phi(K) )
\circ_1(\mathrm{id}_{K}\circ _0 \mathrm{id}_{K}\circ_0  \cdots \circ_0\Phi(K)\circ_0  \mathrm{id}_{K} )
\circ_1 \cdots\\\cdots\circ_1
(\mathrm{id}_{K}\circ_0  \Phi(K)\circ_0 \mathrm{id}_{K}\circ_0\cdots \circ_0  \mathrm{id}_{K})\circ_1
(\Phi(K)\circ_0  \mathrm{id}_{K}\circ_0 \cdots\circ_0  \mathrm{id}_{K}),
\end{multline}
where the product in each bracket has $n$ factors.
\item \label{re22}
If $\mathscr{B}(\mathtt{i},\mathtt{i})$ has direct sums,
then so does $\mathcal{Z}(\mathscr{B})$, see \cite[Lemma~3.6]{Mug}.
In more detail, for any two pairs $(F_1,\Phi^{(1)}), (F_2,\Phi^{(2)})\in \mathcal{Z}(\mathscr{B})$,
their direct sum~$(F_1\oplus F_2,\Theta)$ is defined as follows:
for any $1$-morphism $K\in \mathscr{B}(\mathtt{i},\mathtt{i})$,
the isomorphism $\Theta(K)$ is given by the sum of the two paths of maximal length on the following diagram:
\begin{equation}\label{eq321}
\xymatrix@C=2.3pc@R=2.3pc{&(F_1\oplus F_2)\circ K\ar[dl]_{\pi_{F_1}\circ_0 \mathrm{id}_K}
\ar[dr]^{\pi_{F_2}\circ_0 \mathrm{id}_K}&\\
F_1\circ K\ar[d]_{\Phi^{(1)}(K)}&&F_2\circ K\ar[d]^{\Phi^{(2)}(K)}\\
K\circ F_1\ar[dr]_{ \mathrm{id}_K\circ_0\iota_{F_1}}&& K\circ F_2
\ar[dl]^{ \mathrm{id}_K\circ_0\iota_{F_2}}\\
&K\circ(F_1\oplus F_2)&
}
\end{equation}
The inverse of  $\Theta(K)$ is given by the sum of the two paths of maximal length on the following diagram:
\begin{displaymath}
\xymatrix@C=2.3pc@R=2.3pc{&K\circ(F_1\oplus F_2)\ar[dl]_{\mathrm{id}_K\circ_0 \pi_{F_1}}
\ar[dr]^{\mathrm{id}_K\circ_0 \pi_{F_2}}&\\
K\circ F_1\ar[d]_{(\Phi^{(1)}(K))^{-1}}&&K\circ F_2\ar[d]^{(\Phi^{(2)}(K))^{-1}}\\
F_1\circ K\ar[dr]_{ \iota_{F_1}\circ_0\mathrm{id}_K}&& F_2\circ K
\ar[dl]^{ \iota_{F_2}\circ_0\mathrm{id}_K}\\
&(F_1\oplus F_2)\circ K&
}
\end{displaymath}
It follows directly from the definition that $\Theta$ is a natural isomorphism.
For any $1$-morphisms $K,H\in \mathscr{B}(\mathtt{i},\mathtt{i})$, condition~\eqref{1c} for $\Theta$
follows from the commutativity of the following diagram:
\begin{displaymath}
\xymatrix@C=2.3pc@R=2.3pc{&(F_1\oplus F_2)\circ K\circ H\ar[dl]_{\pi_{F_1}\circ_0 \mathrm{id}_K\circ_0\mathrm{id}_H}
\ar[dr]^{\pi_{F_2}\circ_0 \mathrm{id}_K\circ_0\mathrm{id}_H}&\\
F_1\circ K\circ H\ar@/_4pc/[dddd]_{\Phi^{(1)}(K\circ H)}\ar[d]^{\Phi^{(1)}(K)\circ_0\mathrm{id}_H}&&F_2\circ K\circ H\ar[d]_{\Phi^{(2)}(K)\circ_0\mathrm{id}_H}\ar@/^4pc/[dddd]^{\Phi^{(2)}(K\circ H)}\\
K\circ F_1\circ H\ar@{=}[dd]\ar[dr]^{ \mathrm{id}_K\circ_0\iota_{F_1}\circ_0\mathrm{id}_H}&& K\circ F_2 \circ H\ar@{=}[dd]
\ar[dl]_{ \mathrm{id}_K\circ_0\iota_{F_2}\circ_0\mathrm{id}_H}\\
&K\circ(F_1\oplus F_2)\circ H\ar[dl]^{\mathrm{id}_K\circ_0\pi_{F_1}\circ_0\mathrm{id}_H}
\ar[dr]_{\mathrm{id}_K\circ_0\pi_{F_2}\circ_0\mathrm{id}_H}&\\
K\circ F_1\circ H\ar[d]^{\mathrm{id}_K\circ_0\Phi^{(1)}(H)}&&K\circ F_2\circ H\ar[d]_{\mathrm{id}_K\circ_0\Phi^{(2)}(H)}\\
K\circ H\circ F_1\ar[dr]_{ \mathrm{id}_K\circ_0\mathrm{id}_H\circ_0\iota_{F_1}}&& K\circ H \circ F_2
\ar[dl]^{ \mathrm{id}_K\circ_0\mathrm{id}_H\circ_0\iota_{F_2}}\\
&K\circ H\circ(F_1\oplus F_2)&
}
\end{displaymath}
Here the top (resp. bottom) hexagon in the middle is obtained by horizontally composing
the diagram of \eqref{eq321}
with $\mathrm{id}_H$ (resp. $\mathrm{id}_K$) from the right (resp. left) hand side.
The two triangles in the middle obviously commute and the two ``rectangles'' on the sides
commute by condition~\eqref{1c}.  By definition, the right hand side of condition~\eqref{1c} for 
$\Theta$ is the sum of four paths of maximal length obtained from the two hexagons. 
Two of these paths are zero because of $\pi_{F_2}\circ_1\iota_{F_1}=0$ and $\pi_{F_1}\circ_1\iota_{F_2}=0$.
Hence condition~\eqref{1c} for $\Theta$ follows.
\end{enumerate}}
\end{remark}

Conversely to Remark~\ref{re1}~\eqref{re22}, we have the following statement:

\begin{proposition}\label{prop}
For any object $(F_1\oplus F_2,\Theta)\in \mathcal{Z}(\mathscr{B})$, we have that $(F_1\oplus F_2,\Theta)$ can
be decomposed into a direct sum of two objects $(F_1,\Phi^{(1)}),(F_2,\Phi^{(2)})\in\mathcal{Z}(\mathscr{B})$ 
if, and only if, 
for any $1$-morphism $K\in \mathscr{B}(\mathtt{i},\mathtt{i})$ and $i\neq j\in\{1,2\}$, we have
\begin{equation}\label{eq1}
(\mathrm{id}_{K} \circ_0 \pi_{F_j})
\circ_1\Theta(K)\circ_1(\iota_{F_i}\circ_0 \mathrm{id}_{K})=0,
\end{equation}
or, equivalently, for any $1$-morphism $K\in \mathscr{B}(\mathtt{i},\mathtt{i})$ and $i\neq j\in\{1,2\}$, we have
\begin{equation}\label{eq2}
(\pi_{F_j}\circ_0 \mathrm{id}_K)\circ_1
(\Theta(K))^{-1}\circ_1 (\mathrm{id}_K\circ_0 \iota_{F_i})=0.
\end{equation}
\end{proposition}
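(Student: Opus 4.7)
The plan is to use the projection and inclusion morphisms $\iota_{F_i}, \pi_{F_j}$ of the direct sum $F_1 \oplus F_2$ in $\mathscr{B}(\mathtt{i},\mathtt{i})$ as candidates for morphisms in the center, and to define the \emph{diagonal blocks}
\[
\Phi^{(i)}(K) \;:=\; (\mathrm{id}_K\circ_0 \pi_{F_i}) \circ_1 \Theta(K)\circ_1 (\iota_{F_i}\circ_0 \mathrm{id}_K)
\]
as the candidate half-braidings. With this definition the two sides of \eqref{eq1} are precisely the off-diagonal entries of $\Theta(K)$ with respect to the decompositions $F_i\circ K$ and $K\circ F_i$, so the proposition is morally the statement that $\Theta(K)$ is block-diagonal if and only if the direct sum splits in $\mathcal{Z}(\mathscr{B})$.

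For the forward direction, if $(F_1\oplus F_2,\Theta)\cong (F_1,\Phi^{(1)})\oplus(F_2,\Phi^{(2)})$ in $\mathcal{Z}(\mathscr{B})$ with the structure maps being $\iota_{F_i},\pi_{F_j}$, then each $\iota_{F_i}$ and $\pi_{F_j}$ satisfies \eqref{2c}. Applying \eqref{2c} first to $\pi_{F_j}$ and then to $\iota_{F_i}$, and using $\pi_{F_j}\circ_1\iota_{F_i}=0$ for $i\neq j$, the left-hand side of \eqref{eq1} collapses immediately to $0$.

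For the converse, assume \eqref{eq1}. First, using the identity $\iota_{F_i}\circ_1 \pi_{F_i}=\mathrm{id}_{F_1\oplus F_2}-\iota_{F_j}\circ_1 \pi_{F_j}$ (for $j\neq i$) inside a direct computation of $\Phi^{(i)}(K)\circ_1\bigl[(\pi_{F_i}\circ_0\mathrm{id}_K)\circ_1\Theta(K)^{-1}\circ_1(\mathrm{id}_K\circ_0\iota_{F_i})\bigr]$, the cross term vanishes by \eqref{eq1} and $\Phi^{(i)}(K)$ comes out invertible; naturality is automatic from the naturality of $\Theta$. To verify condition~\eqref{1c} for $\Phi^{(i)}$, expand $\Phi^{(i)}(K\circ H)$ using \eqref{1c} for $\Theta$, then expand $(\mathrm{id}_K\circ_0\Phi^{(i)}(H))\circ_1(\Phi^{(i)}(K)\circ_0\mathrm{id}_H)$; the two expressions differ by a single ``middle'' factor $\mathrm{id}_K\circ_0 (\iota_{F_i}\circ_1 \pi_{F_i})\circ_0\mathrm{id}_H$ versus $\mathrm{id}_{K\circ(F_1\oplus F_2)\circ H}$, and substituting $\iota_{F_i}\circ_1\pi_{F_i}=\mathrm{id}-\iota_{F_j}\circ_1\pi_{F_j}$ reduces the discrepancy to a term whose factor $(\mathrm{id}_K\circ_0\pi_{F_j})\circ_1\Theta(K)\circ_1(\iota_{F_i}\circ_0\mathrm{id}_K)\circ_0\mathrm{id}_H$ is zero by \eqref{eq1}. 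Exactly the same ``$\iota_{F_i}\circ_1\pi_{F_i}=\mathrm{id}-\iota_{F_j}\circ_1\pi_{F_j}$'' trick shows that $\iota_{F_i}$ and $\pi_{F_j}$ satisfy condition~\eqref{2c}, hence define morphisms $(F_i,\Phi^{(i)})\to(F_1\oplus F_2,\Theta)$ and $(F_1\oplus F_2,\Theta)\to(F_j,\Phi^{(j)})$ in $\mathcal{Z}(\mathscr{B})$; since they already witness $F_1\oplus F_2$ as a biproduct in $\mathscr{B}(\mathtt{i},\mathtt{i})$, they witness the biproduct in $\mathcal{Z}(\mathscr{B})$ too.

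Finally, for the equivalence of \eqref{eq1} and \eqref{eq2}, inserting the identity decomposition $\sum_k(\mathrm{id}_K\circ_0\iota_{F_k})\circ_1(\mathrm{id}_K\circ_0\pi_{F_k})=\mathrm{id}_{K\circ(F_1\oplus F_2)}$ into $\Theta(K)^{-1}\circ_1\Theta(K)=\mathrm{id}$, and similarly $\sum_k(\iota_{F_k}\circ_0\mathrm{id}_K)\circ_1(\pi_{F_k}\circ_0\mathrm{id}_K)=\mathrm{id}_{(F_1\oplus F_2)\circ K}$ into $\Theta(K)\circ_1\Theta(K)^{-1}=\mathrm{id}$, shows that the $2\times 2$ ``matrices'' whose entries are the left-hand sides of \eqref{eq1} and \eqref{eq2} are mutually inverse; consequently one is diagonal precisely when the other is. I expect the main bookkeeping obstacle to be writing out the verification of~\eqref{1c} for $\Phi^{(i)}$ cleanly, since it requires carefully tracking several horizontal and vertical compositions simultaneously—the rest of the argument is then a transparent application of the same $\iota\pi + \iota\pi=\mathrm{id}$ trick.
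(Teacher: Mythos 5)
Your proof is correct and follows essentially the same route as the paper: define the diagonal blocks $\Phi^{(i)}(K):=(\mathrm{id}_K\circ_0\pi_{F_i})\circ_1\Theta(K)\circ_1(\iota_{F_i}\circ_0\mathrm{id}_K)$, use $\iota_{F_i}\circ_1\pi_{F_i}=\mathrm{id}-\iota_{F_j}\circ_1\pi_{F_j}$ together with \eqref{eq1} to verify invertibility and condition~\eqref{1c}, inherit naturality from $\Theta$, and conclude. Two cosmetic differences: you spell out the equivalence of~\eqref{eq1} and~\eqref{eq2} via the mutually inverse ``block matrices'' of $\Theta(K)$ and $\Theta(K)^{-1}$ (the paper simply calls this clear), and in the final step you argue via the biproduct being reflected along the faithful forgetful functor $\mathcal{Z}(\mathscr{B})\to\mathscr{B}(\mathtt{i},\mathtt{i})$ once $\iota_{F_i},\pi_{F_j}$ are seen to satisfy~\eqref{2c}, whereas the paper implicitly reassembles $\Theta$ from the $\Phi^{(i)}$ as in Remark~\ref{re1}\eqref{re22}; both are valid ways to finish.
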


\begin{proof}
It directly follows from the definition and Remark~\ref{re1}\eqref{re22} that any direct sum of two objects in 
$\mathcal{Z}(\mathscr{B})$ satisfies both conditions~\eqref{eq1} and~\eqref{eq2}.

We now prove the ``only if" part of the statement. It is clear that condition~\eqref{eq1} 
and  condition~\eqref{eq2} are equivalent, so we assume that condition~\eqref{eq1} is satisfied.
For any $1$-morphism $K\in \mathscr{B}(\mathtt{i},\mathtt{i})$, we define $\Phi^{(i)}(K)$, $i=1,2$,
by requiring that the solid square in the following diagram commutes:
\begin{displaymath}
\xymatrix@R=3.3pc@C=2.5pc{F_i\circ K\ar@<+.5ex>[rr]^{\iota_{F_i}\circ_0 \mathrm{id}_K\quad}\ar@<-.5ex>[d]_{\Phi^{(i)}(K)}
&&(F_1\oplus F_2)\circ K\ar@<+.5ex>@{-->}[ll]^{\pi_{F_i}\circ_0 \mathrm{id}_K\quad}
\ar@<-.5ex>[d]_{\Theta(K)}\\
K\circ F_i\ar@<-.5ex>@{-->}[u]_{(\Phi^{(i)}(K))^{-1}}\ar@<-.5ex>@{-->}[rr]_{\mathrm{id}_K\circ_0\iota_{F_i}\quad}&
&K\circ (F_1\oplus F_2)\ar@<-.5ex>@{-->}[u]_{(\Theta(K))^{-1}}
\ar@<-.5ex>[ll]_{\mathrm{id}_{K}\circ_0 \pi_{F_i}\quad}.}
\end{displaymath}
Similarly, we define $(\Phi^{(i)}(K))^{-1}$ so that the dashed square in the above diagram commutes.
From the definition, we have that, for each $i$, the corresponding  $\Phi^{(i)}(K)$'s give a natural 
transformation $\Phi^{(i)}$.
Indeed, for any $i,j$ and any $2$-morphism $\alpha: K\to H$, we have the following commutative diagram 
\begin{equation}\label{eq01}
\xymatrix@R=3.3pc@C=2.3pc{F_i\circ K\ar[rr]^{\iota_{F_i}\circ_0 \mathrm{id}_K\quad}\ar[d]_{\mathrm{id}_{F_i}\circ_0\alpha}
&&(F_1\oplus F_2)\circ K
\ar[rr]^{\Theta(K)}\ar[d]_{\mathrm{id}_{F_1\oplus F_2}\circ_0\alpha}
&& K\circ (F_1\oplus F_2)
\ar[d]_{\alpha\circ_0\,
\mathrm{id}_{F_1\oplus F_2}}\ar[rr]^{\quad\mathrm{id}_{K} \circ_0 \pi_{F_j}}
&& K\circ F_j
\ar[d]_{\alpha\circ_0\,
\mathrm{id}_{F_j}}\\
F_i\circ H\ar[rr]^{\iota_{F_i}\circ_0 \mathrm{id}_{H}\quad}&&(F_1\oplus F_2)\circ H
\ar[rr]^{\Theta(H)}&&
 H\circ (F_1\oplus F_2)\ar[rr]^{\quad\mathrm{id}_{H}\circ_0\pi_{F_j}}&&H\circ F_j.}
\end{equation}
Note that $\sum_{i} \iota_{F_i}\circ_1\pi_{F_i}=\mathrm{id}_{F_1\oplus F_2}$.
By condition~\eqref{eq1} and definitions, for any $1$-morphism $K$, we have  
\begin{displaymath}
\Phi^{(i)}(K)\circ_1(\Phi^{(i)}(K))^{-1}=\mathrm{id}_{K\circ F_i}\quad \text{and}\quad
(\Phi^{(i)}(K))^{-1}\circ_1\Phi^{(i)}(K)= id_{F_i\circ K}, 
\end{displaymath}
which implies that each $\Phi^{(i)}$ is, indeed, a natural isomorphism.
It is also easy to  check condition~\eqref{1c}, for each $\Phi^{(i)}$. 
This 
is given by the two ``rectangles'' on the sides of the following diagram
(commutativity of this diagram uses condition~\eqref{eq1}):
\begin{displaymath}
\xymatrix@C=2.3pc@R=2.3pc{&(F_1\oplus F_2)\circ K\circ H\ar[ddd]^{\Theta(K)\circ_0 \mathrm{id}_H}&\\
F_1\circ K\circ H\ar[ur]^{\iota_{F_1}\circ_0 \mathrm{id}_K\circ_0\mathrm{id}_H}\ar@/_4pc/[dddd]_{\Phi^{(1)}(K\circ H)}
\ar[d]^{\Phi^{(1)}(K)\circ_0\mathrm{id}_H}&&F_2\circ K\circ 
H\ar[d]_{\Phi^{(2)}(K)\circ_0\mathrm{id}_H}\ar@/^4pc/[dddd]^{\Phi^{(2)}(K\circ H)}
\ar[ul]_{\iota_{F_2}\circ_0 \mathrm{id}_K\circ_0\mathrm{id}_H}\\
K\circ F_1\circ H\ar@{=}[dd]&& K\circ F_2 \circ H\ar@{=}[dd]\\
&K\circ(F_1\oplus F_2)\circ H \ar[ul]_{\mathrm{id}_K\circ_0\pi_{F_1}\circ_0\mathrm{id}_H}
\ar[ur]^{ \mathrm{id}_K\circ_0\pi_{F_2}\circ_0\mathrm{id}_H}
\ar[ddd]^{\mathrm{id}_K\circ_0\Theta(H)}&\\
K\circ F_1\circ H\ar[d]^{\mathrm{id}_K\circ_0
\Phi^{(1)}(H)}\ar[ur]_{\mathrm{id}_K\circ_0\iota_{F_1}\circ_0\mathrm{id}_H}&
&K\circ F_2\circ H\ar[d]_{\mathrm{id}_K\circ_0\Phi^{(2)}(H)}\ar[ul]^{\mathrm{id}_K\circ_0\iota_{F_2}\circ_0\mathrm{id}_H}\\
K\circ H\circ F_1&& K\circ H \circ F_2\\
&K\circ H\circ(F_1\oplus F_2)\ar[ur]_{ \mathrm{id}_K\circ_0\mathrm{id}_H\circ_0\pi_{F_2}}\ar[ul]^{ \mathrm{id}_K\circ_0\mathrm{id}_H\circ_0\pi_{F_1}}&
}
\end{displaymath}
\end{proof}
Similarly to~\eqref{eq01}, for any $i,j$ and 
any $2$-morphism $\alpha: K\to H$, we have the following commutative diagram
\begin{equation}\label{eq3}
\xymatrix@R=3.3pc@C=2.3pc{ K \circ F_i\ar[rr]^{\mathrm{id}_K\circ_0 \iota_{F_i}\quad}
\ar[d]_{\alpha\circ_0\mathrm{id}_{F_i}}
&&K \circ(F_1\oplus F_2)
\ar[rr]^{(\Theta(K))^{-1}}\ar[d]_{\alpha\circ_0\mathrm{id}_{F_1\oplus F_2}}
&& (F_1\oplus F_2)\circ K
\ar[d]_{\mathrm{id}_{F_1\oplus F_2}\circ_0\alpha}\ar[rr]^{\quad \pi_{F_j}\circ_0\mathrm{id}_{K}}
&&  F_j\circ K
\ar[d]_{\mathrm{id}_{F_j}\circ_0\alpha}\\
H \circ F_i\ar[rr]^{\mathrm{id}_{H}\circ_0 \iota_{F_i}\quad}&&H \circ(F_1\oplus F_2)
\ar[rr]^{(\Theta(H))^{-1}}&&
(F_1\oplus F_2)\circ H\ar[rr]^{\quad\pi_{F_j}\circ_0\mathrm{id}_{H}}&& F_j\circ H.}
\end{equation}

\begin{remark}{\rm
Later on, in Theorem~\ref{th28'}~\eqref{th28'.1} (for the case $k=d=2$), we will see examples of  
indecomposable elements in the Drinfeld center which, in particular, have the form $(F_0\oplus F_0,\Phi)$.
}
\end{remark}

\section{Finitary $2$-category of ideals for a tree algebra}\label{s3}

\subsection{A finitary $2$-category for tree algebra}\label{s3.1}

Let $A$ be the path algebra of a finite connected tree quiver $Q=(Q_0, Q_1, \mathfrak{s}, \mathfrak{t})$, where $Q_0$
is the set of vertices, $Q_1$ is the set of arrows, $\mathfrak{s}:Q_1\to Q_0$ is the source function and
$\mathfrak{t}:Q_1\to Q_0$ is the target function. Denote by $Q^p$ the set consisting of all paths in $Q$ and
by $\mathfrak{l}:Q^p\to \{0,1,2,\dots\}$ the {\em length function}, which assigns the length of the path
to each path. The set $Q^p$ can be equipped with a partial order given, for  $w,w'\in Q^p$, by
$w\preceq w'$ if $w'=awb$ for some $a,b\in Q^p$. We also write $w\prec w'$ if $w\preceq w'$ and $w\neq w'$.
For each vertex $v\in Q_0$, we denote by $\varepsilon_v$ the corresponding trivial path in $Q$ of length zero
and in this way we identify vertices in $Q$ with paths of length zero.

Let $\mathcal{I}(A)$ denote the set consisting of all ideals in $A$. Elements in $\mathcal{I}(A)$ can be
alternatively viewed as subbimodules of the $A\text{-}A$-bimodule ${}_AA_A$. We denote by
$\mathcal{I}(A)^{\mathrm{ind}}$ the subset of $\mathcal{I}(A)$ consisting of all indecomposable ideals,
namely, indecomposable subbimodules. By \cite[Lemma~3]{Zh}, each ideal $I$ in $A$ has a unique minimal set
of path generators denoted by $G(I)$. We denote by $\mathfrak{s}_{G(I)}$ the set of all sources for
elements in $G(I)$ and by $\mathfrak{t}_{G(I)}$ the set of all targets for
elements in $G(I)$ respectively.

For each ideal $I$ of $A$, define $\mathrm{Dp}_{I}$ to be the functor
\begin{displaymath}
I\otimes_A {}_-:A\text{-mod}\to A\text{-mod}.
\end{displaymath}

Let $\mathcal{C}$ be a small category equivalent to $A$-mod. Then we define the $2$-category $\mathscr{D}_A$ to have
\begin{itemize}
\item one object $\mathtt{i}$ (which we identify with $\mathcal{C}$);
\item as $1$-morphisms, all functors are given, up to equivalence with $A$-mod, by
functors from the additive closure of all $\mathrm{Dp}_{I}$'s;
\item as $2$-morphisms, all  natural transformations of functors.
\end{itemize}
The category $\mathscr{D}_A$ is a finitary $2$-category but not a fiat one unless $Q$ has only one vertex. Note that $\mathrm{Dp}_{I}\circ \mathrm{Dp}_{J}\cong\mathrm{Dp}_{{IJ}}$ for any ideals $I,J$ of $A$, see \cite{GM}.
It follows from the Krull-Schmidt theorem that every indecomposable $1$-morphism is uniquely (up to isomorphism) determined by an indecomposable ideal.

\subsection{Simple transitive $2$-representations for $\mathscr{D}_A$}

By \cite[Lemma~6]{Zh}, for each ideal $I\in\mathcal{I}(A)^{\mathrm{ind}}$, the corresponding
isomorphism class of the functor $\mathrm{Dp}_{I}$ forms a left cell which we will denote by $\mathcal{L}_I$.
The same isomorphism class forms, as well, a right cell and hence also a two-sided cell.
By definition, we have
\begin{align*}
\mathbf{N}_I(\mathtt{i})=\mathrm{add}(\{F:&\ F \text{ is isomorphic to
a direct summand of}\\
&\text{ an element in }
\{G\circ \mathrm{Dp}_{I}|\ G\in \mathcal{S}_{\mathscr{D}_A}\}\}).
\end{align*}
From \cite[Corollary~8]{Zh}, we obtain that the unique maximal ideal $\mathbf{I}_I$ in $\mathbf{N}_I$
which does not contain the identity $2$-morphism on $\mathrm{Dp}_{I}$
is generated by all $2$-morphisms $\mathrm{id}_F$, where $F>_{\mathcal{L}}\mathrm{Dp}_{I}$. Moreover,
the endomorphism algebra of the object $\mathrm{Dp}_{I}$ in the quotient category $\mathbf{N}_I/\mathbf{I}_I(\mathtt{i})=\mathbf{C}_{\mathcal{L}_I}(\mathtt{i})$ is isomorphic to
$\Bbbk$. Therefore $\mathbf{C}_{\mathcal{L}_I}(\mathtt{i})\cong \Bbbk$-mod.

For a fixed ideal $I\in \mathcal{I}(A)^{\mathrm{ind}}$, set $K=\langle \varepsilon_{i} |\ i\in \mathfrak{t}_{G(I)}\rangle$. It follows from the definition that $K$ is an idempotent ideal and $KI=I$.

\begin{proposition}\label{p1}
For any ideal $I\in \mathcal{I}(A)^{\mathrm{ind}}$ and the corresponding ideal $K$ defined above,
the cell $2$-representations $\mathbf{C}_{\mathcal{L}_I}$ and $\mathbf{C}_{\mathcal{L}_K}$ are equivalent.
\end{proposition}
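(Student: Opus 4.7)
The plan is to construct the equivalence $\bar\Phi:\mathbf{C}_{\mathcal{L}_K}\to\mathbf{C}_{\mathcal{L}_I}$ as the descent of pre-composition with $\mathrm{Dp}_I$. Two elementary identities underlie the construction. With $e=\sum_{t\in\mathfrak{t}_{G(I)}}\varepsilon_t$, one has $K=AeA$ and $e\in K$, so $K^2=K$. Moreover, every $g\in G(I)$ satisfies $\varepsilon_{\mathfrak{t}(g)}g=g$ with $\mathfrak{t}(g)\in\mathfrak{t}_{G(I)}$, giving $g\in KI$ and hence $KI=I$. These identities translate to $\mathrm{Dp}_K\circ\mathrm{Dp}_K\cong\mathrm{Dp}_K$ and $\mathrm{Dp}_K\circ\mathrm{Dp}_I\cong\mathrm{Dp}_I$.

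By associativity of composition, $\Phi:=-\circ\mathrm{Dp}_I$ is a $2$-natural transformation $\mathbf{P}_\mathtt{i}\to\mathbf{P}_\mathtt{i}$. A typical object of $\mathbf{N}_K$ is a summand of $G\circ\mathrm{Dp}_K$ for some $G\in\mathcal{S}_{\mathscr{D}_A}$, and its $\Phi$-image is a summand of $G\circ\mathrm{Dp}_K\circ\mathrm{Dp}_I\cong G\circ\mathrm{Dp}_I$, which lies in $\mathbf{N}_I$. Hence $\Phi$ restricts to a $2$-natural transformation $\mathbf{N}_K\to\mathbf{N}_I$.

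The main step, and the main obstacle, is to show $\Phi(\mathbf{I}_K)\subseteq\mathbf{I}_I$, so that $\Phi$ descends to $\bar\Phi$. By \cite[Corollary~8]{Zh}, $\mathbf{I}_K$ is generated by $\mathrm{id}_{\mathrm{Dp}_J}$ for indecomposable $\mathrm{Dp}_J\in\mathbf{N}_K$ with $J\neq K$; $\Phi$ sends these to $\mathrm{id}_{\mathrm{Dp}_{JI}}$, so it suffices to show that $I$ is not an indecomposable bimodule summand of $JI$. Since $JI\subseteq I$ and $I$ is indecomposable, this reduces to proving $JI\neq I$. Suppose for contradiction $JI=I$; for each $g\in G(I)$, comparing coefficients in the path basis of an expression $g=\sum j_n i_n$ yields $j\in J$ and $i\in I$ with $g=j\cdot i$ as single paths. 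Here $i$ is an initial subpath of $g$ and, being in $I$, contains some $g'\in G(I)$ as a subpath; minimality of $G(I)$ forces $g'=g$, so $i=g$ and consequently $j=\varepsilon_{\mathfrak{t}(g)}$. Therefore $\varepsilon_t\in J$ for every $t\in\mathfrak{t}_{G(I)}$, giving $K\subseteq J$ as two-sided ideals of $A$. On the other hand, $\mathrm{Dp}_J\in\mathbf{N}_K$ means $J$ is isomorphic as a bimodule to a summand of some $HK$, which is a sub-bimodule of $HK\subseteq K$. Hence $\dim_\Bbbk J\leq\dim_\Bbbk K$, and together with $K\subseteq J$ this forces $J=K$, contradicting $J\neq K$.

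Finally, $\bar\Phi$ sends the generator $\mathrm{Dp}_K$ to $\mathrm{Dp}_K\circ\mathrm{Dp}_I\cong\mathrm{Dp}_I$ and $\mathrm{id}_{\mathrm{Dp}_K}$ to $\mathrm{id}_{\mathrm{Dp}_I}$. As recorded above, both $\mathbf{C}_{\mathcal{L}_K}(\mathtt{i})$ and $\mathbf{C}_{\mathcal{L}_I}(\mathtt{i})$ are equivalent to $\Bbbk$-mod with one-dimensional endomorphism algebra on the respective generator, so $\bar\Phi_\mathtt{i}$ is essentially surjective and fully faithful. Together with the $\mathscr{D}_A$-equivariance inherited from $\Phi$, this makes $\bar\Phi$ an equivalence of $2$-representations.
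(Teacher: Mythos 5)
Your proof is correct and follows essentially the same route as the paper: pre-composition with $\mathrm{Dp}_I$ restricts to a $2$-natural transformation $\mathbf{N}_K\to\mathbf{N}_I$, descends to the cell $2$-representations, and is an equivalence because both underlying categories are $\Bbbk$-mod with the generator going to the generator. The one point of divergence is how you show that $\mathbf{I}_K$ is carried into $\mathbf{I}_I$: the paper isolates this as the separate Lemma~\ref{l1} ($\mathrm{St}_I=\mathrm{St}_K$) and then, for $F>_{\mathcal L}\mathrm{Dp}_K$ with defining ideal $J$, argues $J\subsetneq K\Rightarrow JK\neq K\Rightarrow JI\neq I$; you inline the antichain computation (which is precisely the content of the paper's Lemma~\ref{l1}) to get $K\subseteq J$ directly from $JI=I$, and then close the contradiction with a dimension count $\dim J\le\dim K$ coming from $J$ being a summand of some $HK\subseteq K$. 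The dimension-count finish is a mild variant that sidesteps having to assert $J\subsetneq K$ up front, but the engine of the argument — the antichain property of $G(I)$ — is identical.
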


To prove this proposition, we need the following lemma. We define $\mathrm{St}_I$ as the set
consisting of all ideals $J$ such that $JI=I$.

\begin{lemma}\label{l1}
Given $I$ and $K$ as above, we have $\mathrm{St}_I=\mathrm{St}_K$.
\end{lemma}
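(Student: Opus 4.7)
The plan is to show that both $\mathrm{St}_I$ and $\mathrm{St}_K$ coincide with the set $\{J\in\mathcal{I}(A)\,:\,K \subseteq J\}$, from which the equality of the two follows at once. One inclusion is formal. If $JK=K$, then by associativity of ideal multiplication together with the identity $KI=I$ recalled just before the statement of the lemma, we have $JI = J(KI) = (JK)I = KI = I$, so $\mathrm{St}_K \subseteq \mathrm{St}_I$. For the reverse, I would prove the sharper statement that $JI=I$ forces $\varepsilon_i\in J$ for every $i\in \mathfrak{t}_{G(I)}$, equivalently $K\subseteq J$; the opposite containment $K\subseteq J \Rightarrow J\in \mathrm{St}_I$ is immediate from $JI\supseteq KI=I$ and $JI\subseteq I$.

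So fix $g\in G(I)$ and assume $g\in JI$. Since both $J$ and $I$ are spanned by paths of $A$ (being closed under left and right multiplication by paths), writing $g=\sum_l j_l i_l$ with $j_l\in J$, $i_l\in I$ and expanding everything in the path basis, each product of path-basis elements is either zero or a single path. Comparing the coefficient of $g$ on both sides (which must equal $1$), there exist paths $p$ in the path basis of $J$ and $q$ in the path basis of $I$ whose concatenation $pq$ equals $g$. Since $Q$ is a tree, sub-path decompositions are unique, so $q$ is a connected sub-segment of $g$. As $q\in I$, some generator $g'\in G(I)$ satisfies $g'\preceq q \preceq g$; by the minimality of $G(I)$ (no element of $G(I)$ is a proper sub-path of another, see \cite[Lemma~3]{Zh}), this forces $g'=g$, hence $q=g$ and $p$ must be a trivial path at $\mathfrak{t}(g)$. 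Since $p$ lies in $J$, we conclude $\varepsilon_{\mathfrak{t}(g)}\in J$, as desired.

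The same analysis applied to $K$ in place of $I$ works without change: the minimal set of path generators of $K$ is $G(K)=\{\varepsilon_i\,:\,i\in \mathfrak{t}_{G(I)}\}$ (no $\varepsilon_i$ is a sub-path of $\varepsilon_j$ for $i\neq j$), so $\mathfrak{t}_{G(K)}=\mathfrak{t}_{G(I)}$, and therefore $\mathrm{St}_K=\{J\,:\,K\subseteq J\}$, matching the characterization of $\mathrm{St}_I$ above. The main obstacle is the path-basis step in the middle paragraph: one has to justify that the \emph{a priori} $\Bbbk$-linear relation $g=\sum_l j_l i_l$ must be realised by a single path-factorization $g=pq$ with $p\in J$ and $q\in I$; this, together with the minimality of $G(I)$ and the uniqueness of sub-paths in a tree, is what pins down the factorization to the trivial one and hence extracts the element $\varepsilon_{\mathfrak{t}(g)}$ inside $J$.
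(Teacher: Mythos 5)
Your proof is correct and follows essentially the same route as the paper's: the easy inclusion $\mathrm{St}_K\subseteq\mathrm{St}_I$ via associativity and $KI=I$, and the hard inclusion by factoring a generator $g\in G(I)$ as a product $pq$ of a path in $J$ and a path in $I$, then using the antichain property of $G(I)$ to force $p=\varepsilon_{\mathfrak{t}(g)}\in J$. The paper's proof works directly with the minimal generating sets $G(J)$ and $G(I)$ rather than arbitrary path-basis expansions, but the coefficient-comparison step you worry about is routine once one notes (as in \cite[Lemma~3]{Zh}) that ideals of a tree path algebra are spanned by paths, so the two arguments are substantively identical.
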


\begin{proof}
Since $KI=I$, we obtain the inclusion $\mathrm{St}_K\subset\mathrm{St}_I$.
To prove $\mathrm{St}_I\subset\mathrm{St}_K$,
we assume that $G(I)=\{u_1, u_2,\dots,u_k\}$ and
consider the set $G(J)=\{w_1,w_2,\dots,w_l\}$ for some ideal $J\in \mathrm{St}_I$.
Note that $JI=I$ and the ideal $JI$ is generated by the set
\begin{displaymath}
\{w_iau_s| 1\leq i\leq l, 1\leq s\leq k, a\in Q^p\}.
\end{displaymath}
Then, for each $u_j\in G(I)=G(JI)$, there exist some $i,s$ and $a$ such that $w_iau_s=u_j$.
Since $u_1,\dots, u_k$ form an anti-chain with respect to $\preceq$, we get $s=j$ and
$w_ia=\varepsilon_{\mathfrak{t}(u_j)}$. Furthermore, we have $w_i=\varepsilon_{\mathfrak{t}(u_j)}$
and thus $G(K)\subset G(J)$, which implies $K\subset J$. Thus $JK=K$ and $J\in\mathrm{St}_K$.
This completes the proof.
\end{proof}

\begin{proof}[Proof of Proposition~\ref{p1}]
Consider the endofunctor
${}_-\circ \mathrm{Dp}_{I}:\mathbf{P}_{\mathtt{i}}(\mathtt{i})\to\mathbf{P}_{\mathtt{i}}(\mathtt{i})$.
It sends objects in $\mathbf{N}_K(\mathtt{i})$ to objects in $\mathbf{N}_I(\mathtt{i})$.
Let $F$ be an indecomposable $1$-morphism such that $F>_{\mathcal{L}}\mathrm{Dp}_{K}$
and let $J$ be the ideal defining $F$. Then $J\subsetneq K$ and hence $JK\neq K$.
By Lemma~\ref{l1} we thus get $JI\neq I$, that is $F\circ \mathrm{Dp}_{I}\not\cong \mathrm{Dp}_{I}$.
Taking the first paragraph of this subsection into account,
the latter implies that ${}_-\circ \mathrm{Dp}_{I}$ maps $\mathbf{I}_K(\mathtt{i})$
to $\mathbf{I}_I(\mathtt{i})$ and hence induces a functor from $\mathbf{C}_{\mathcal{L}_K}(\mathtt{i})$ to $\mathbf{C}_{\mathcal{L}_I}(\mathtt{i})$. Note that both $\mathbf{C}_{\mathcal{L}_K}(\mathtt{i})$
and $\mathbf{C}_{\mathcal{L}_I}(\mathtt{i})$ are equivalent to $\Bbbk\text{-}\mathrm{mod}$, moreover,
$KI=I$ implies that ${}_-\circ \mathrm{Dp}_{I}$ sends an indecomposable generator of
$\mathbf{C}_{\mathcal{L}_K}(\mathtt{i})$ to an indecomposable generator of $\mathbf{C}_{\mathcal{L}_I}(\mathtt{i})$.
Therefore ${}_-\circ \mathrm{Dp}_{I}$ defines an equivalence between
$\mathbf{C}_{\mathcal{L}_K}$ to $\mathbf{C}_{\mathcal{L}_I}$.
\end{proof}

\begin{lemma}\label{l1p}
Any idempotent ideal $I\in \mathcal{I}(A)^{\mathrm{ind}}$ is generated by length zero paths, moreover, $\mathrm{St}_I=\{J\in \mathcal{I}(A)|\,I\subset J\}$.
\end{lemma}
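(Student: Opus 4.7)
The plan is to prove the two assertions separately. The first (that $I$ is generated by length-zero paths) follows from idempotency combined with the antichain property of $G(I)$ under $\preceq$. The second (the description of $\mathrm{St}_I$) is a purely formal manipulation of ideals using only the identity $I\cdot I=I$.

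For the first assertion, write $G(I)=\{u_1,\ldots,u_k\}$. Since $A$ is the path algebra of a tree quiver, paths form a $\Bbbk$-basis of $A$, and the ideal $I\cdot I$ equals the linear span of the monomials $bu_iau_sb'$ with $i,s\in\{1,\ldots,k\}$ and $b,a,b'\in Q^p$. Fix any $u_j\in G(I)=G(I\cdot I)$. Writing $u_j$ as a linear combination of such monomials and comparing coefficients in the path basis, at least one such monomial must satisfy $bu_iau_sb'=u_j$ on the nose. Reading this equality as $u_j=b\cdot u_i\cdot(au_sb')$ and as $u_j=(bu_ia)\cdot u_s\cdot b'$ yields $u_i\preceq u_j$ and $u_s\preceq u_j$, so the antichain property forces $u_i=u_j=u_s$. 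Counting lengths in $u_j=bu_jau_jb'$ now gives $\mathfrak{l}(u_j)=2\mathfrak{l}(u_j)+\mathfrak{l}(b)+\mathfrak{l}(a)+\mathfrak{l}(b')$, so $\mathfrak{l}(u_j)=0$.

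For the second assertion: if $J\in\mathrm{St}_I$, then $I=JI\subseteq J$, giving one inclusion. Conversely, if $I\subseteq J$, then using idempotency $I=I\cdot I\subseteq J\cdot I\subseteq I$, whence $JI=I$ and $J\in\mathrm{St}_I$. There is no serious obstacle in either part; the only step that deserves a second look is the passage from the abstract membership $u_j\in I\cdot I$ to a genuine monomial equality $u_j=bu_iau_sb'$, which is justified because the paths form a basis of $A$ and hence no linear cancellations can occur when $u_j$ itself is a single basis element.
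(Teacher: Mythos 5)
Your proof is correct. The second assertion (the description of $\mathrm{St}_I$) is handled exactly as in the paper: $JI=I\subseteq J$ gives one inclusion, and $I=I^2\subseteq JI\subseteq I$ gives the other; the paper additionally records $IJ=I$, but this is not needed for $\mathrm{St}_I$.

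For the first assertion, your route genuinely differs from the paper's. The paper reuses its auxiliary ideal $K=\langle\varepsilon_i \,:\, i\in\mathfrak{t}_{G(I)}\rangle$ together with Lemma~\ref{l1} ($\mathrm{St}_I=\mathrm{St}_K$): since $I^2=I$ means $I\in\mathrm{St}_I=\mathrm{St}_K$, one gets $K\subseteq I$, and combined with the built-in inclusion $I\subseteq K$ this yields $I=K$. You instead argue directly from the path basis: $I\cdot I$ is spanned by paths $bu_iau_sb'$, and since $u_j\in G(I)=G(I\cdot I)$ is itself a basis element, linear independence of paths forces an on-the-nose equality $u_j=bu_iau_sb'$; the antichain property of the minimal generating set (which underlies Lemma 3 of~\cite{Zh}) then gives $u_i=u_s=u_j$, and a length count kills $\mathfrak{l}(u_j)$. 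This direct argument is self-contained and avoids invoking Lemma~\ref{l1}, at the cost of partly duplicating the combinatorics already carried out in the proof of that lemma (which performs essentially the same antichain-and-length analysis on $JI$). The paper's approach is slightly more economical in the given context because Lemma~\ref{l1} is needed anyway for Proposition~\ref{p1}; your approach would be preferable if one wanted this lemma in isolation.
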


\begin{proof}
Consider the ideal $K$ corresponding to $I$ as defined above.
If $I^2=I$, then $I\in \mathrm{St}_I$. By Lemma~\ref{l1}, we get $I\in \mathrm{St}_K$ and thus $K\subset I$.
As $I\subset K$ by construction, we obtain $I=K$ which means that $I$ is generated by length zero paths.

Denote by $\Gamma$ the set $\{J\in \mathcal{I}(A)|\,I\subset J\}$. Clearly, $\mathrm{St}_I\subset \Gamma$.
For any $J\in\Gamma$, we have both
\begin{displaymath}
 I=I^2\subset JI\subset I\quad\text{and}\quad I=I^2\subset IJ\subset I.
 \end{displaymath}
Thus we get $JI=I=IJ$ which implies the  inclusion $\Gamma\subset \mathrm{St}_I$.
\end{proof}

Due to Lemma~\ref{l1p}, for any two distinct idempotent ideals $I,J\in \mathcal{I}(A)^{\mathrm{ind}}$, we have that
either $I\not\in \mathrm{St}_J$ or $J\not\in \mathrm{St}_I$ and thus
the cell $2$-representations $\mathbf{C}_{\mathcal{L}_I}$
and $\mathbf{C}_{\mathcal{L}_J}$ are not equivalent.

Now we are ready to formulate our first main result.

\begin{theorem}\label{th5}
Every simple transitive $2$-representation of $\mathscr{D}_A$ is equivalent to $\mathbf{C}_{\mathcal{L}_I}$
for some indecomposable idempotent ideal $I$.
\end{theorem}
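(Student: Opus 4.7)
The plan is to proceed in three steps: identify the apex of an arbitrary simple transitive $2$-representation $\mathbf{M}$, force the apex to be idempotent, and then identify $\mathbf{M}$ with the corresponding cell $2$-representation.

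\emph{Step 1 (Locating the apex and pinning down $\mathbf{M}(\mathtt{i})$).} Because each indecomposable $1$-morphism of $\mathscr{D}_A$ is its own two-sided cell (by \cite[Lemma~6]{Zh}, as recalled above), the apex of $\mathbf{M}$ is a single class $\{\mathrm{Dp}_I\}$ for some $I\in\mathcal{I}(A)^{\mathrm{ind}}$. Pick $X_0\in\mathbf{M}(\mathtt{i})$ with $Y:=\mathrm{Dp}_I X_0\neq 0$, which exists by definition of the apex. For any indecomposable $\mathrm{Dp}_J$ we have $\mathrm{Dp}_J Y=\mathrm{Dp}_{JI}X_0$. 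If $JI=I$ then $\mathrm{Dp}_J Y\cong Y$; if $JI\subsetneq I$, each indecomposable summand $\mathrm{Dp}_K$ of $\mathrm{Dp}_{JI}$ satisfies $K\subseteq JI\subsetneq I$, is a summand of $\mathrm{Dp}_J\circ\mathrm{Dp}_I$ distinct from $\mathrm{Dp}_I$, hence is strictly above the apex in the two-sided order and so annihilates $\mathbf{M}$. Thus $\mathrm{Dp}_J Y\in\{Y,0\}$, and consequently $\mathbf{G}_{\mathbf{M}}(\{Y\})=\mathrm{add}(Y)$. Simple transitivity then yields $\mathbf{M}(\mathtt{i})=\mathrm{add}(Y)$ with $Y$ the unique indecomposable.

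\emph{Step 2 (The apex is idempotent).} Assume for contradiction that $I^2\subsetneq I$. Writing $X_0\cong Y^{\oplus k}$ for some $k\geq 1$, we obtain $Y=\mathrm{Dp}_I X_0\cong (\mathrm{Dp}_I Y)^{\oplus k}$, so $\mathrm{Dp}_I Y\neq 0$. On the other hand $\mathrm{Dp}_I Y=\mathrm{Dp}_{I^2}X_0=\bigoplus_t\mathrm{Dp}_{K_t}X_0$ where each $K_t\subseteq I^2\subsetneq I$; by Step~1 the $\mathrm{Dp}_{K_t}$ all lie strictly above the apex, whence $\mathrm{Dp}_I Y=0$, a contradiction. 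Therefore $I^2=I$, and $I$ is an indecomposable idempotent ideal, so $\mathbf{C}_{\mathcal{L}_I}$ is defined and is the natural candidate.

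\emph{Step 3 (Equivalence with $\mathbf{C}_{\mathcal{L}_I}$).} By the description in Section~\ref{s3.1}, $\mathbf{C}_{\mathcal{L}_I}(\mathtt{i})\simeq\Bbbk$-mod, with $\mathrm{Dp}_I$ fixing the indecomposable generator and any other indecomposable $\mathrm{Dp}_J$ acting according to whether $JI=I$ (i.e., $J\in\mathrm{St}_I$, equivalently $J\supseteq I$ by Lemma~\ref{l1p}) or not. The action on $Y$ in $\mathbf{M}$ obeys precisely the same dichotomy, so a candidate $2$-natural transformation $\Phi:\mathbf{C}_{\mathcal{L}_I}\to\mathbf{M}$ sending the distinguished generator to $Y$ has its compatibility with horizontal composition determined by the identity $\mathrm{Dp}_{J_1}\circ\mathrm{Dp}_{J_2}=\mathrm{Dp}_{J_1J_2}$ together with Lemma~\ref{l1}. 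It remains to verify that $\Phi_{\mathtt{i}}$ is an equivalence, which reduces to $\mathrm{End}_{\mathbf{M}(\mathtt{i})}(Y)\cong\Bbbk$. This local $\Bbbk$-algebra has the property that any endomorphism $f$ in its radical generates an ideal of $\mathbf{M}$ which, by Step~1 and naturality, consists only of non-invertible endomorphisms of copies of $Y$; this ideal contains no identity of a non-zero object, so simple transitivity forces it to vanish and thus $\mathrm{End}(Y)=\Bbbk$.

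\emph{Main obstacle.} The subtle point is Step~3: promoting the dichotomy ``$\mathrm{Dp}_J Y\in\{Y,0\}$'' to an honest equivalence of $2$-representations, since one must verify both that $\mathrm{End}_{\mathbf{M}(\mathtt{i})}(Y)$ is forced to be $\Bbbk$ by simple transitivity and that the chosen isomorphisms $\Phi(\mathrm{Dp}_J Y)\cong Y$ are coherent under horizontal composition. The latter is where Lemmas~\ref{l1} and~\ref{l1p} do the real bookkeeping, by reducing the stabiliser $\mathrm{St}_I$ of $I$ to the transparent set $\{J\mid I\subseteq J\}$.
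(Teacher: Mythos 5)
Your overall strategy (locate the relevant indecomposable ideal, show it is idempotent, show $\mathbf{M}(\mathtt{i})\simeq\Bbbk$-mod, then build the equivalence through the principal $2$-representation) matches the paper's, but there is a real gap at the end of Step~1 that propagates into Steps~2 and~3: you never justify that $\mathbf{M}(\mathtt{i})$ has a \emph{unique} indecomposable object. From $\mathrm{Dp}_J Y\in\{Y,0\}$ you correctly get $\mathbf{G}_{\mathbf{M}}(\{Y\})=\mathrm{add}(Y)$ and, by transitivity, $\mathbf{M}(\mathtt{i})=\mathrm{add}(Y)$, but $Y=\mathrm{Dp}_I X_0$ may very well be decomposable, in which case $\mathrm{add}(Y)$ contains several non-isomorphic indecomposables. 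This breaks Step~2 (writing $X_0\cong Y^{\oplus k}$ presupposes $Y$ is the unique indecomposable) and Step~3 (you treat $\mathrm{End}(Y)$ as a local algebra). The paper closes exactly this gap: it shows $\mathrm{Dp}_I X_i\neq 0$ for \emph{every} indecomposable $X_i$ (using $IJ=JI=I$ for $J$ in the support) so that $\mathrm{add}(\mathrm{Dp}_I X_i)=\mathbf{M}(\mathtt{i})$ for each $i$, hence every entry of $[\mathrm{Dp}_I]$ is positive, and then invokes~\cite[Proposition~6]{MM4} on the normal form of an idempotent nonnegative integer matrix to force $[\mathrm{Dp}_I]=(1)$. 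That computation is essential and cannot be replaced by the transitivity argument you give.

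Two smaller points. First, you invoke the apex of a transitive $2$-representation to conclude that cells strictly above the one not annihilating $\mathbf{M}$ must annihilate it; $\mathscr{D}_A$ is not fiat, so this requires citing the existence/uniqueness of the apex for general finitary $2$-categories (the paper avoids the concept entirely, working instead with a minimal-by-inclusion ideal whose iso class lies in $\Sigma$, and deriving the same dichotomy directly from $JI\subseteq I$). Second, once indecomposability of $Y$ and $\mathrm{End}(Y)\cong\Bbbk$ are in hand, the construction of the equivalence in your Step~3 should go through the $2$-natural transformation $\Psi:\mathbf{P}_{\mathtt{i}}\to\mathbf{M}$ sending $\mathbbm{1}_{\mathtt{i}}$ to $Y$ (restricted to $\mathbf{N}_I$ and passed to the quotient), as the paper does; this sidesteps the coherence verification you flag as the ``main obstacle.''
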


\begin{proof}
Let $\mathbf{M}$ be a simple transitive $2$-representation of $\mathscr{D}_A$. Set
\begin{displaymath}
\Sigma:=\{F\in \mathcal{S}_{\mathscr{D}_A}|\,\mathbf{M}(F)\neq 0\}.
\end{displaymath}
Since $\mathbf{M}(\mathbbm{1}_{\mathtt{i}})=\mathrm{id}_{\mathbf{M}(\mathtt{i})}\neq 0$, we see that the
set $\Sigma$ is not empty. Let $I$ be a minimal (with respect to inclusions) indecomposable ideal of
$A$ such that $\mathrm{Dp}_{I}\in \Sigma$. Then the additive closure of $\mathrm{Dp}_{I}X$,
where $X$ runs through all objects in $\mathbf{M}(\mathtt{i})$, is non-zero
since $\mathrm{Dp}_{I}\in \Sigma$, and is closed under the action of
$\mathscr{D}_A$ by minimality of $I$ and the fact that $I$ is an ideal.
Transitivity of $\mathbf{M}$ hence implies that this additive
closure must coincide with the whole of $\mathbf{M}(\mathtt{i})$. As, for any ideal $J\in\mathcal{I}(A)$,
we have $JI\subset I$, from the minimality of $I$ it follows that $\mathrm{Dp}_{J}$ acts as zero on
$\mathbf{M}(\mathtt{i})$ if and only if $JI\neq I$. In particular, if $JI\neq I$, then none of the
direct summands of $\mathrm{Dp}_{JI}$ lies in $\Sigma$.

Assume that $X_1, X_2, \dots, X_n$ is a complete and irredundant list of representatives of isomorphism
classes of indecomposable objects in $\mathbf{M}(\mathtt{i})$. Since $\mathrm{Dp}_{I}\in \Sigma$,
there exists some $j$ such that $\mathrm{Dp}_{I}X_j\neq 0$. Note that $0\neq\mathrm{add}(\mathrm{Dp}_{I}X_j)$
is $\mathscr{D}_A$-invariant. Due to transitivity of $\mathbf{M}$, we obtain
$\mathrm{add}(\mathrm{Dp}_{I}X_j)=\mathbf{M}(\mathtt{i})$. Therefore we have
$\mathrm{add}(\mathrm{Dp}_{I}^2\mathbf{M}(\mathtt{i}))=\mathbf{M}(\mathtt{i})$
yielding $I^2=I$ by minimality of $I$. By Lemma~\ref{l1p}, this idempotent ideal $I$
is generated by length zero paths.

Now we claim that there exists exactly one minimal indecomposable ideal $I$ of $A$ such that
$\mathrm{Dp}_{I}\in \Sigma$. Indeed, if $I'$ would be another such minimal ideal, then
minimality of both $I$ and $I'$ would imply $I'I\neq I$ which, by the above, would mean that
$\mathrm{Dp}_{I'}\not\in \Sigma$, a contradiction. Therefore, for any $\mathrm{Dp}_{J}\in \Sigma$,
we have $I\subset J$ and hence $JI=IJ=I$.

Next we claim that $\mathrm{Dp}_{I}X_i\neq 0$ for all $i$. Indeed, assume $\mathrm{Dp}_{I}X_{i}=0$
for some $i$. Then, for any  $J$ such that $\mathrm{Dp}_{J}\in \Sigma$, we have
\begin{displaymath}
0=\mathrm{Dp}_{I}X_{i}=\mathrm{Dp}_{IJ}X_{i}\cong\mathrm{Dp}_{I}\mathrm{Dp}_{J}X_{i}.
\end{displaymath}
This means that $\mathbf{G}_\mathbf{M}(\{X_{i}\})$ is annihilated by $\mathrm{Dp}_{I}$
and hence cannot coincide with $\mathbf{M}(\mathtt{i})$ since $\mathrm{Dp}_{I}\in \Sigma$.
This, however, contradicts transitivity of $\mathbf{M}$. Therefore $\mathrm{Dp}_{I}X_i\neq 0$ for all $i$,
moreover, $\mathrm{add}(\mathrm{Dp}_{I}X_i)$ is $\mathscr{D}_A$-invariant for each $i$, since $I$ is an ideal,
and thus must coincide with $\mathbf{M}(\mathtt{i})$ due to transitivity of $\mathbf{M}$.
Consequently, all entries in the matrix $[\mathrm{Dp}_{I}]$ are positive.

Since $\mathrm{Dp}_{I}^2\cong\mathrm{Dp}_{I}$, we have $[\mathrm{Dp}_{I}]=[\mathrm{Dp}_{I}]^2$.
From \cite[Proposition~6]{MM4}, we know that there exists a permutation matrix $S$ such that
the idempotent matrix $S^{-1}[\mathrm{Dp}_{I}]S$ has the following form:
\begin{displaymath}
\left(\begin{array}{ccc} 0_r & B& BC \\ 0 & 1_s & C\\ 0 & 0 & 0_t
\end{array}\right),
\end{displaymath}
where $0_r$ (resp. $0_t$) is the zero $r\times r$ (resp. $t\times t$) matrix and $1_s$ is the identity
$s\times s$ matrix such that $r+s+t=n$. Permuting the elements in $\{X_1, X_2, \dots, X_n\}$,
if necessary, we may assume that $S$ is the identity matrix. As all entries in $[\mathrm{Dp}_{I}]$ are positive,
it follows that $r=t=0$ and $s=1$, that is $[\mathrm{Dp}_{I}]=(1)$. Hence $\mathbf{M}(\mathtt{i})$
has only one indecomposable object up to isomorphism. We denote this object by $X$.

From the above we have $\mathrm{Dp}_{I}X\cong X$. Thus, for any $J\in\mathcal{I}(A)$ we
have $\mathrm{Dp}_{JI}X\cong \mathrm{Dp}_{J}X$. Therefore, for those ideals $J$ such that $JI\neq I$,
we have $[\mathrm{Dp}_{J}]=[\mathrm{Dp}_{JI}]=(0)$; and for those ideals $J$ such that $JI= I$
we have $[\mathrm{Dp}_{J}]=[\mathrm{Dp}_{JI}]=(1)$. This implies that each $\mathrm{Dp}_{J}$ induces an
endomorphism of the endomorphism algebra $\mathrm{End}(X):=B$. Since $X$ is indecomposable, the algebra
$B$ is local and its radical consists of all nilpotent elements in $B$. In particular, this radical must be
preserved by all $\mathrm{Dp}_{J}$ and hence it generates a $\mathscr{D}_A$-invariant ideal of
$\mathbf{M}(\mathtt{i})$ which does not contain any identity morphisms apart from the one for the zero object.
By the simple transitivity of $\mathbf{M}$, the radical of $B$ must be zero. This means that $B\cong \Bbbk$ and $\mathbf{M}(\mathtt{i})$ is equivalent to $\Bbbk$-mod.

Consider the unique $2$-natural transformation $\Psi:\mathbf{P}_{\mathtt{i}}(\mathtt{i})\to \mathbf{M}(\mathtt{i})$ which sends $\mathbbm{1}_{\mathtt{i}}$ to $X$. Then $\Psi$ sends $\mathrm{Dp}_{I}$ to $\mathrm{Dp}_{I}X\cong X$
and all indecomposable $1$-morphisms $F$, satisfying $F>_{\mathcal{L}}\mathrm{Dp}_{I}$, to zero since
$F\circ \mathrm{Dp}_{I}\not\cong \mathrm{Dp}_{I}$. Therefore the restriction of $\Psi$ to
$\mathbf{N}_I(\mathtt{i})$ gives a $2$-natural transformation from $\mathbf{N}_I$ to $\mathbf{M}$
which annihilates the ideal $\mathbf{I}_I$ in $\mathbf{N}_I$. Thus it induces a $2$-natural transformation
from $\mathbf{C}_{\mathcal{L}_I}$ to  $\mathbf{M}$ and the latter is an equivalence by construction.
This completes the proof.
\end{proof}

\subsection{The Drinfeld center of $\mathscr{D}_A$}
For any positive integer $s$, we denote by $1_s$ the identity $s\times s$ matrix.
Note that there is only one object $\mathtt{i}$ in the finitary $2$-category $\mathscr{D}_A$.
Using the definition of Drinfeld center given in Subsection~\ref{s2.9}, one obtains the following result:

\begin{theorem}\label{thm}
Objects of the category $\mathcal{Z}(\mathscr{D}_A)$ are finite direct sums of copies of
$(\mathbbm{1}_{\mathtt{i}},e)$, up to isomorphism. Furthermore, we have
$\mathrm{End}_{\mathcal{Z}(\mathscr{D}_A)}((\mathbbm{1}_{\mathtt{i}},e))=\Bbbk\mathrm{id}_{\mathbbm{1}_{\mathtt{i}}}$.
\end{theorem}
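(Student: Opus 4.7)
The plan is to show, for any $(F,\Phi)\in\mathcal{Z}(\mathscr{D}_A)$, that $F\cong\mathbbm{1}_{\mathtt{i}}^n$ for some $n\ge 0$, that the only central structure on $\mathbbm{1}_{\mathtt{i}}^n$ is $e^{\oplus n}$, and finally that $\mathrm{End}_{\mathcal{Z}(\mathscr{D}_A)}((\mathbbm{1}_{\mathtt{i}},e))=\Bbbk\cdot\mathrm{id}$. Together with the existence of direct sums in $\mathcal{Z}(\mathscr{D}_A)$ (Remark~\ref{re1}\eqref{re22}), this yields the theorem.

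For the reduction $F\cong\mathbbm{1}_{\mathtt{i}}^n$, first apply Krull--Schmidt to write $F\cong\bigoplus_{I}\mathrm{Dp}_I^{m_I}$ over indecomposable ideals $I$. The isomorphism $\Phi(\mathrm{Dp}_J)\colon F\circ\mathrm{Dp}_J\to\mathrm{Dp}_J\circ F$, combined with $\mathrm{Dp}_I\circ\mathrm{Dp}_J\cong\mathrm{Dp}_{IJ}$, yields for every indecomposable ideal $J$ the bimodule isomorphism
\[
\bigoplus_{I}(IJ)^{m_I}\ \cong\ \bigoplus_{I}(JI)^{m_I}.
\]
The key claim is that, as $J$ ranges over all indecomposable ideals, this system of constraints forces $m_I=0$ for every $I\neq A$. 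I would argue by contradiction: choose $I_0\neq A$ with $m_{I_0}>0$ minimal with respect to inclusion among such ideals, pick a vertex $v\in Q_0\setminus\mathfrak{t}_{G(I_0)}$ (which exists because $I_0\subsetneq A$), and specialise $J=\langle\varepsilon_v\rangle$. Using the explicit description of the minimal generator sets $G(\,\cdot\,)$ from~\cite{Zh} and the asymmetric nature of ideal multiplication in a tree path algebra (the same phenomenon exploited in Lemma~\ref{l1}), one isolates an indecomposable bimodule summand appearing with different multiplicities on the two sides, contradicting the displayed isomorphism. I expect this combinatorial matching of Krull--Schmidt multiplicities to be the main technical obstacle.

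Once $F=\mathbbm{1}_{\mathtt{i}}^n$, write $\Phi(K)=[\phi_{ij}(K)]_{ij}$ under the identification $\mathrm{End}(K^n)\cong M_n(\mathrm{End}(K))$. Naturality of $\Phi$ in $K$ forces each family $\{\phi_{ij}(K)\}_K$ to be a natural endomorphism of the identity functor on $\mathscr{D}_A(\mathtt{i},\mathtt{i})$. Evaluating at $K=\mathbbm{1}_{\mathtt{i}}$ gives $\phi_{ij}(\mathbbm{1}_{\mathtt{i}})\in\mathrm{End}(\mathbbm{1}_{\mathtt{i}})=Z(A)$; a direct computation (commuting first with each $\varepsilon_v$ to restrict to trivial cycles, then with each arrow to equate neighbouring coefficients along the connected tree $Q$) gives $Z(A)=\Bbbk\cdot 1_A$, so $\phi_{ij}(\mathbbm{1}_{\mathtt{i}})=c_{ij}\cdot\mathrm{id}_{\mathbbm{1}_{\mathtt{i}}}$ for some $c_{ij}\in\Bbbk$. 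For every ideal $I$, applying naturality to the $2$-morphism $\mathrm{Dp}_I\to\mathbbm{1}_{\mathtt{i}}$ induced by the inclusion $I\hookrightarrow A$ propagates this scalar, forcing $\phi_{ij}(K)=c_{ij}\cdot\mathrm{id}_K$ for every $K$. Hence $\Phi(K)=C\otimes\mathrm{id}_K$ for the fixed matrix $C=[c_{ij}]\in M_n(\Bbbk)$, and Remark~\ref{re1}\eqref{re12} forces $\Phi(\mathbbm{1}_{\mathtt{i}})=\mathrm{id}_F$, so $C$ is the identity matrix and $\Phi=e^{\oplus n}$. Therefore $(F,\Phi)\cong(\mathbbm{1}_{\mathtt{i}},e)^{\oplus n}$. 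Finally, an endomorphism of $(\mathbbm{1}_{\mathtt{i}},e)$ is an $f\in\mathrm{End}_{\mathscr{D}_A}(\mathbbm{1}_{\mathtt{i}})=Z(A)=\Bbbk$ satisfying~\eqref{2c}; since $e(K)=\mathrm{id}_K$ this reduces to $\mathrm{id}_K\circ_0 f=f\circ_0\mathrm{id}_K$, which is automatic for scalars, giving the endomorphism statement.
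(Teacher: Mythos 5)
Your second step (from $F\cong\mathbbm{1}_{\mathtt{i}}^{n}$ to $\Phi=e^{\oplus n}$) and the endomorphism computation are sound and in spirit match the paper's final paragraph: naturality with respect to the inclusion $\mathrm{Dp}_J\hookrightarrow\mathbbm{1}_{\mathtt{i}}$ forces the scalar entries of $\Phi$ to equal $1$, and the endomorphism claim follows since $e(K)=\mathrm{id}_K$. However, your first step — forcing $F\cong\mathbbm{1}_{\mathtt{i}}^{n}$ — has a real gap, and it is exactly the one you yourself flag as "the main technical obstacle."

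The issue is that the isomorphism $\bigoplus_{I}(IJ)^{m_I}\cong\bigoplus_{I}(JI)^{m_I}$ only records that $\Theta(\mathrm{Dp}_J)$ is \emph{an isomorphism}; it discards the naturality of $\Theta$, which is the stronger and essential piece of data. Because $IJ$ and $JI$ are themselves typically decomposable, the two Krull--Schmidt decompositions can mix: a summand of $I_0J$ for one $I_0$ may be matched to a summand of $JI_1$ for a different $I_1$. Your sketched contradiction (minimal $I_0\neq A$, choose $v\notin\mathfrak{t}_{G(I_0)}$, specialise $J=\langle\varepsilon_v\rangle$) does not rule out such cross-cancellation, and no multiplicity bookkeeping is supplied. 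Moreover, the existence of a vertex $v\in Q_0\setminus\mathfrak{t}_{G(I_0)}$ is not a formal consequence of $I_0\subsetneq A$ when $G(I_0)$ contains paths of positive length; this needs an argument too.

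The paper bypasses all of this by first decomposing the object $(F,\Theta)$ \emph{inside} $\mathcal{Z}(\mathscr{D}_A)$ using Proposition~\ref{prop}. Concretely, one applies the naturality square~\eqref{eq01} to the $2$-morphism $\mathrm{Dp}_J\hookrightarrow\mathbbm{1}_{\mathtt{i}}$ induced by $\iota_{(J,A)}$, and uses that $A$ is hereditary — so each $I_k$ is left projective and $\iota_{(J,A)}\circ_0\mathrm{id}_{\mathrm{Dp}_{I_k}}$ is injective — to see that the off-diagonal components $(\mathrm{id}_{\mathrm{Dp}_J}\circ_0\pi_{\mathrm{Dp}_{I_k}})\circ_1\Theta(\mathrm{Dp}_J)\circ_1(\iota_{\mathrm{Dp}_{I_j}}\circ_0\mathrm{id}_{\mathrm{Dp}_J})$ vanish. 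This reduces immediately to the indecomposable case $F=\mathrm{Dp}_I$, where the isomorphism $\mathrm{Dp}_{IJ}\cong\mathrm{Dp}_{JI}$, combined with~\eqref{epp1} (nonzero maps between indecomposable ideals are scalar multiples of inclusions), gives the genuine equality $IJ=JI$ of subbimodules — a single, clean constraint. The combinatorics of paths in a tree then forces $I=A$ in two short steps (all generators have length zero; then $G(I)=\{\varepsilon_i\}_{i\in Q_0}$). Unless you can actually carry out the Krull--Schmidt multiplicity comparison you sketch, the reduction via Proposition~\ref{prop} is the missing idea in your proposal.
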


\begin{proof}
Let $(F,\Theta)$ be an object in $\mathcal{Z}(\mathscr{D}_A)$.  Assume that
\begin{displaymath}
F:=\bigoplus_{i=1}^n\mathrm{Dp}_{I_i},
\end{displaymath}
for some positive integer $n$ and
$I_i\in\mathcal{I}(A)^{\mathrm{ind}}$.
We would like to use Proposition~\ref{prop} to prove that $(F, \Theta)$ decomposes into a direct sum of 
certain $\{(\mathrm{Dp}_{I_i},\Phi^{(i)}\}_{i=1}^n$ in $\mathcal{Z}(\mathscr{D}_A)$,
where each natural isomorphism $\Phi^{(i)}$ is given by:
\begin{displaymath}
\Phi^{(i)}(\mathrm{Dp}_{J}):=(\mathrm{id}_{\mathrm{Dp}_{J}} \circ_0 \pi_{\mathrm{Dp}_{I_i}})
\circ_1\Theta({\mathrm{Dp}_{J}})\circ_1(\iota_{\mathrm{Dp}_{I_i}}\circ_0 \mathrm{id}_{\mathrm{Dp}_{J}}),\quad
J\in \mathcal{I}(A).
\end{displaymath} 
It suffices to show that
condition~\eqref{eq1} holds, for any $j,k\in\{1,2,\dots,n\}$ and any $1$-morphism 
$\mathrm{Dp}_{J}$, where $ J\in\mathcal{I}(A)$. This reads as follows:
for any $j\neq k$ and any $J\in \mathcal{I}(A)$, we have
\begin{equation}\label{eqn1}
(\mathrm{id}_{\mathrm{Dp}_{J}} \circ_0 \pi_{\mathrm{Dp}_{I_k}})
\circ_1\Theta({\mathrm{Dp}_{J}})\circ_1(\iota_{\mathrm{Dp}_{I_j}}\circ_0 \mathrm{id}_{\mathrm{Dp}_{J}})=0.
\end{equation}

Similarly, applying the commuting diagram~\eqref{eq01} to the $2$-morphism
$\mathrm{Dp}_{J}\hookrightarrow\mathbbm{1}_{\mathtt{i}}$ induced by $\iota_{(J,A)}$,
we have the following commuting diagram:
\begin{displaymath}
\hspace*{-1cm}
\xymatrix{\mathrm{Dp}_{I_j}\circ
\mathrm{Dp}_{J}\ar[rr]^{\iota_{\mathrm{Dp}_{I_j}}\circ_0 \mathrm{id}_{\mathrm{Dp}_{J}}\quad}\ar[d]_{\mathrm{id}_{\mathrm{Dp}_{I_j}}\circ_0\iota_{(J,A)}}
&&(\displaystyle\bigoplus_{i=1}^n\mathrm{Dp}_{I_i})\circ
\mathrm{Dp}_{J}
\ar[rr]^{\Theta(\mathrm{Dp}_{J})}\ar[d]_{\mathrm{id}_{F}\circ_0\, \iota_{(J,A)}}
&&\mathrm{Dp}_{J}\circ(\displaystyle\bigoplus_{i=1}^n \mathrm{Dp}_{I_i})
\ar[d]_{\iota_{(J,A)}\circ_0\,
\mathrm{id}_{F}}\ar[rr]^{\quad\mathrm{id}_{\mathrm{Dp}_{J}} \circ_0 \pi_{\mathrm{Dp}_{I_k}}}&&\mathrm{Dp}_{J}\circ\mathrm{Dp}_{I_k}
\ar[d]_{\iota_{(J,A)}\circ_0\,
\mathrm{id}_{\mathrm{Dp}_{I_k}}}\\
\mathrm{Dp}_{I_j}\ar[rr]^{\iota_{\mathrm{Dp}_{I_j}}}&&\displaystyle \bigoplus_{i=1}^n\mathrm{Dp}_{I_i}
\ar[rr]^{\Theta(\mathbbm{1}_{\mathtt{i}})}&&
 \displaystyle \bigoplus_{i=1}^n\mathrm{Dp}_{I_i}\ar[rr]^{\pi_{\mathrm{Dp}_{I_k}}}&&\mathrm{Dp}_{I_k}.}
\end{displaymath}
Note that $\Theta(\mathbbm{1}_{\mathtt{i}})=\mathrm{id}_F$ and 
each $\iota_{(J,A)}\circ_0\,\mathrm{id}_{\mathrm{Dp}_{I_k}}$ is injective since each 
$I_k$ is left projective because $A$ is hereditary.
As $\pi_{\mathrm{Dp}_{I_k}}\circ_1 \iota_{\mathrm{Dp}_{I_j}}=\delta_{jk}\mathrm{id}_{\mathrm{Dp}_{I_j}}$,
using the commutativity of the above diagram, we obtain that equation~\eqref{eqn1}
follows from the injectivity of each $\iota_{(J,A)}\circ_0\,
\mathrm{id}_{\mathrm{Dp}_{I_k}}$. Therefore, by Proposition~\ref{prop}, now we only need to determine 
objects $(F,\Theta)$ in $\mathcal{Z}(\mathscr{D}_A)$, for all indecomposable $1$-morphisms~$F$. This
reduces the problem to the case  $n=1$.

Assume that $(\mathrm{Dp}_{I},\Theta)$, where $I\in\mathcal{I}(A)^{\text{ind}}$, is an object 
in $\mathcal{Z}(\mathscr{D}_A)$. Then $\Theta$ is a natural isomorphism from the functor of 
$\mathrm{Dp}_{I}\circ {}_-$ to the functor ${}_-\circ \mathrm{Dp}_{I}$ given by a family of isomorphisms
\begin{displaymath}
\Theta(\mathrm{Dp}_{J}): \mathrm{Dp}_{I}\circ \mathrm{Dp}_{J}\to \mathrm{Dp}_{J}\circ \mathrm{Dp}_{I}, \quad J\in \mathcal{I}(A).
\end{displaymath}
From \cite[Corollary~8]{Zh}, for any ideals $J',J''\in\mathcal{I}(A)^{\mathrm{ind}}$, we have
\begin{equation}\label{epp1}
\mathrm{Hom}_{A\text{-}A}(J',J'')=\left\{\begin{array}{ll}\Bbbk\iota_{(J',J'')}, & \text{if } J'\subset J'';\\
0, & \text{if } J'\not\subset J'',\end{array}\right.
\end{equation}
where $\iota_{(J',J'')}$ denotes the natural inclusion.
Note that $\mathrm{Dp}_{J'}\circ \mathrm{Dp}_{J''}\cong
\mathrm{Dp}_{J'J''}$, for any two ideals $J',J''$.
Therefore we have $JI=IJ$, for any $J\in \mathcal{I}(A)$.
We claim that this implies $I=A$.
Assume that
\begin{displaymath}
G(I)=\{u_1^1, u_2^1,\dots,u_{i_1}^1\}.
\end{displaymath}

Let us first prove that all
generators in $G(I)$ are of length zero. Indeed, if this would not be the case, there would exist some
$j\in \{1,2,\ldots,i_1\}$ such that $\mathfrak{l}(u_j^1)\geq 1$. Let $J$ be the ideal generated by the element
$\varepsilon_{\mathfrak{s}(u_j^1)}$.
Note that the ideal $IJ$ can be generated by the set
\begin{displaymath}
 \{u_t^1a\varepsilon_{\mathfrak{s}(u_j^1)}|\,a\in Q^p, 1\leq t\leq i_1\},
 \end{displaymath}
 containing the set $G(IJ)$, and there is at most one path between
 any two vertices in $Q$, see~\cite[Lemma~1]{Zh}.
Since $u_j^1=u_j^1\varepsilon_{\mathfrak{s}(u_j^1)}\in IJ$ and
$u_1^1,u_2^1,\ldots,u_{i_1}^1$ are not comparable pairwise, we see that $u_j^1$ is a minimal element
with respect to $\preceq$ and thus lies in $G(IJ)$.

At the same time, it is clear that the ideal $JI$ can be generated by the set
\begin{displaymath}
\{\varepsilon_{\mathfrak{s}(u_j^1)}bu_t^1|\,b\in Q^p, 1\leq t\leq i_1\},
\end{displaymath}
which contains the set $G(JI)$.
Since $\mathfrak{l}(u_j^1)\geq 1$, we have
$\mathfrak{s}(u_j^1)\neq \mathfrak{t}(u_j^1)$ which implies $u_j^1\not\in G(JI)=G(IJ)$,
a contradiction. Therefore we get $\mathfrak{l}(u_t^1)=0$ for all $1\leq t\leq i_1$
and thus $G(I)\subset \{\varepsilon_i|\,i\in Q_0\}$.

Now we prove that $G(I)=\{\varepsilon_i|\,i\in Q_0\}$, that is, $I=A$. Otherwise, there
would exist some $j\in Q_0$ such that $\varepsilon_{j}\not\in G(I)$.
Due to indecomposability of $A$ as an $A$-$A$-bimodule, we may
assume that there exists some $j'\in \mathfrak{s}_{G(I)}$ such that there is an arrow $c$ either from
$j'$ to $j$ or from $j$ to $j'$. We consider the case where $c$ goes from $j'$ to $j$, the
other case is dealt with by similar arguments. Set $J$ to be the
ideal generated by the element $\varepsilon_j$. Then the ideal $JI$
can be generated by the set
\begin{displaymath}
\{\varepsilon_ja\,\varepsilon_i|\,i\in \mathfrak{s}_{G(I)}, a\in Q^p\}\supset G(JI)
\end{displaymath}
and the ideal $IJ$ can be generated by the set
\begin{displaymath}
\{\varepsilon_ib\,\varepsilon_j|\,i\in \mathfrak{s}_{G(I)}, b\in Q^p\}\supset G(IJ).
\end{displaymath}
Since $\mathfrak{t}(c)=j\not\in \mathfrak{s}_{G(I_1)}$, we obtain $c\not\in G(IJ)=G(JI)$ which contradicts the fact that $c=\varepsilon_{j}c\varepsilon_{j'}
\in G(JI)$.
Thus we have $G(I)=\{\varepsilon_i|\,i\in Q_0\}$ and indecomposable pairs
in $\mathcal{Z}(\mathscr{D}_A)$, up to isomorphism, are of the form
$(\mathbbm{1}_{\mathtt{i}},\Theta)$ for some natural isomorphism $\Theta$.

Due to \eqref{epp1}, for each~$J\in \mathcal{I}(A)$, we may assume that $\Theta(\mathrm{Dp}_{J})=k_J
\mathrm{id}_{\mathrm{Dp}_{J}}$, where~$k_J\in\Bbbk$.
Applying the naturality of $\Theta$ to the $2$-morphism
$\mathrm{Dp}_{J}\hookrightarrow\mathbbm{1}_{\mathtt{i}}$ induced by $\iota_{(J,A)}$,
we obtain $\iota_{(J,A)}\circ_1\Theta(\mathrm{Dp}_{J})=\iota_{(J,A)}$
which yields $k_J=1$ for all $J$.
Hence we have $\Theta=e$.

By definition, we have $\mathrm{End}_{\mathcal{Z}(\mathscr{D}_A)}((\mathbbm{1}_{\mathtt{i}},e))\subset
\mathrm{End}_{\mathscr{D}_A(\mathtt{i},\mathtt{i})}(\mathbbm{1}_{\mathtt{i}})=\Bbbk
\mathrm{id}_{\mathbbm{1}_{\mathtt{i}}}$. It is easy to check that any scalar multiple of
$\mathrm{id}_{\mathbbm{1}_{\mathtt{i}}}$ satisfies formula~\eqref{2c}. The statement follows.
\end{proof}

\section{Finitary $2$-category associated to complementary ideals for a tree algebra}\label{s4}

In this section, if not explicitly stated otherwise,
we let $A$ be the path algebra of a tree quiver $Q$ as described in Subsection~\ref{s3.1}.

\subsection{Complementary ideals for $A$}

An ideal $I$ in $A$ is said to be {\em complementary} if the projection $A\tto A/I$ splits.
Denote by $\mathcal{CI}(A)$ the set of all complementary ideals in $A$. It is clear that
$A\in\mathcal{CI}(A)$. For an ideal $I\in\mathcal{I}(A)$, we assume that $G(I)=\{u_1,u_2,\ldots,u_k\}$ and denote
the canonical map $A\tto A/I$ by $\overline{\cdot}$.

\begin{lemma}\label{l2}
For an ideal $I$ in $A$ as above, we have $I\in\mathcal{CI}(A)$ if and only if
$\mathfrak{l}(u_i)\leq 1$ for all $1\leq i\leq k$.
\end{lemma}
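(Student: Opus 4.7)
The plan is to prove the two implications separately: the ``if'' direction by an explicit construction and the ``only if'' direction by a contradiction exploiting the uniqueness-of-paths property of tree quivers recorded in~\cite[Lemma~1]{Zh}.

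For the ``if'' direction, split $G(I) = G_0 \sqcup G_1$ by path length, writing $G_0 = \{\varepsilon_v : v \in S\}$ for $S \subseteq Q_0$ and $G_1 \subseteq Q_1$. Let $Q'$ be the subquiver of $Q$ obtained by deleting each vertex in $S$ (together with all incident arrows) and each arrow in $G_1$, and set $B := \Bbbk Q' \subseteq A$. Every path of $Q$ lies in exactly one of $B$ and $I$, so $A = B \oplus I$ as $\Bbbk$-vector spaces; since $B$ is visibly closed under path composition, it is a subalgebra, and the restriction of the projection $A \to A/I$ is an algebra isomorphism $B \to A/I$ whose inverse is the required section.

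For the ``only if'' direction, suppose some generator $u = \alpha_n \cdots \alpha_1 \in G(I)$ has length $n \geq 2$, with $\alpha_i$ the arrow from $v_{i-1}$ to $v_i$, and assume for contradiction that $s : A/I \to A$ is an algebra section. Minimality of $G(I)$ rules out $\varepsilon_{v_i} \in I$ (for $0 \leq i \leq n$) and $\alpha_i \in I$ (for $1 \leq i \leq n$), since otherwise $u$ would lie in the ideal generated by $G(I) \setminus \{u\}$, contradicting $u \in G(I)$. Hence $\overline{\varepsilon_{v_i}}$ and $\overline{\alpha_i}$ are non-zero in $A/I$. Setting $e_i := s(\overline{\varepsilon_{v_i}})$ yields pairwise orthogonal idempotents of $A$ lifting the $\overline{\varepsilon_{v_i}}$, and $s(\overline{\alpha_i}) = e_i s(\overline{\alpha_i}) e_{i-1}$ lies in $e_i A e_{i-1}$.

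The crux -- and the main obstacle -- is to show $A e_i \cong A\varepsilon_{v_i}$ as left $A$-modules, which is not automatic because $I$ need not be contained in $\mathrm{rad}(A)$ (e.g.\ when $G_0 \neq \emptyset$), so primitivity of $e_i$ in the ambient $A$ does not follow from its primitivity in the subalgebra $s(A/I) \cong A/I$. Granting this isomorphism, each $e_i$ is primitive in $A$ and $e_i A e_{i-1} \cong \varepsilon_{v_i} A \varepsilon_{v_{i-1}}$, which by~\cite[Lemma~1]{Zh} is the one-dimensional space $\Bbbk \alpha_i$. To establish the isomorphism I would apply Nakayama's Lemma over $A/I$: since $A e_i / I e_i \cong (A/I)\overline{\varepsilon_{v_i}}$ is a non-zero projective $(A/I)$-module, it cannot coincide with its own radical, which forces $I e_i \subseteq \mathrm{rad}(A) e_i$; consequently the simple top of $A e_i$ equals that of $(A/I)\overline{\varepsilon_{v_i}}$, namely $S_{v_i}$, and basicness of $A$ yields the desired isomorphism. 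Finally, under these identifications the multiplication map $e_n A e_{n-1} \otimes_{\Bbbk} \cdots \otimes_{\Bbbk} e_1 A e_0 \to e_n A e_0$ corresponds to the composition sending $\alpha_n \otimes \cdots \otimes \alpha_1$ to $u \neq 0$, so it is non-zero; since each $s(\overline{\alpha_i})$ is a non-zero element of the one-dimensional $e_i A e_{i-1}$, the product $s(\overline{\alpha_n}) \cdots s(\overline{\alpha_1})$ is non-zero, contradicting $s(\overline{\alpha_n}) \cdots s(\overline{\alpha_1}) = s(\overline{u}) = s(0) = 0$.
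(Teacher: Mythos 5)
Your ``if'' direction is correct and is essentially the paper's argument in different clothing: the section you build is the same $\Bbbk$-linear map $\overline{v}\mapsto v$ on paths, and observing $A=B\oplus I$ with $B=\Bbbk Q'$ a subalgebra is just a repackaging of the paper's direct verification that this map is multiplicative (for which one only needs that no generator of $G(I)$ can split across a concatenation $v\cdot w$ with $v,w\notin I$ when $\mathfrak{l}(u_j)\leq 1$).

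The ``only if'' direction takes a genuinely different route from the paper, which simply factors $u_j=vw$ into two proper subpaths and argues that $\varphi(\overline{v})\,\varphi(\overline{w})\neq 0$, contradicting $\varphi(\overline{u_j})=0$. Your route via idempotent lifting has a gap at exactly the step you flag as the crux. From ``$Ae_i/Ie_i\cong(A/I)\overline{\varepsilon_{v_i}}$ is a non-zero projective $(A/I)$-module'' Nakayama only yields $\mathrm{rad}(A)e_i+Ie_i\subsetneq Ae_i$; it does \emph{not} yield $Ie_i\subseteq\mathrm{rad}(A)e_i$, and the inference is a non-sequitur in general. A minimal illustration: $A=\Bbbk\times\Bbbk$, $I=\Bbbk\times 0$, $e_i=1=s(\overline{1})$; then $Ae_i/Ie_i\cong\Bbbk$ is a non-zero projective $(A/I)$-module, yet $Ie_i=I\not\subseteq\mathrm{rad}(A)e_i=0$. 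The same phenomenon threatens your setting precisely when $G(I)$ contains some $\varepsilon_w$: a section $s$ may send $\overline{\varepsilon_{v_i}}$ to an idempotent $e_i$ whose image in $A/\mathrm{rad}(A)$ is $\varepsilon_{v_i}+\varepsilon_w$ (or larger), so $e_i$ need not be primitive, $Ae_i\not\cong A\varepsilon_{v_i}$, and $e_iAe_{i-1}$ can have dimension greater than one. You correctly anticipated that primitivity of $e_i$ in $A$ is the obstacle, but the Nakayama argument you sketch does not close it, and without it the final step (non-vanishing of $s(\overline{\alpha_n})\cdots s(\overline{\alpha_1})$ via one-dimensionality of each $e_iAe_{i-1}$) is unsupported.
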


\begin{proof}
To prove the  ``if" part, we note that $A/I$ has a basis consisting of images of all paths in $A\backslash I$
under the canonical map $\overline{\cdot}$. Then the map $\varphi:A/I\to A$ sending $\overline{v}$ to $v$,
where $v$ runs through all paths in $A\backslash I$, splits  $\overline{\cdot}$. Indeed,
we only need to show that this map is a homomorphism. Let $v$  and $w$ in
$A\backslash I$ be such that $vw\neq 0$. We claim that $vw\not \in I$. Indeed, if the latter would not be
the case, there would exist some  $u_j$ and $a,b\in Q^p$ such that $vw=au_jb$. As $\mathfrak{l}(u_j)\leq 1$,
then either $v$ or $w$  is  comparable with $u_j$, which implies that either $v$ or $w$ lies in $I$,
a contradiction. The claim follows.

To prove the ``only if" part, let $\varphi:A/I\to A$ be a splitting of $\overline{\cdot}$.
Assume that there is some $u_j$ such that $\mathfrak{l}(u_j)>1$.
Thus $u_j$ can be written as a composition of two paths $v$ and $w$, both of nonzero length,
that is, $u_j=vw$. Due to minimality of $G(I)$, the images of $v$ and $w$ under the canonical
map $\overline{\cdot}$ are nonzero. By injectivity of $\varphi$, we have
\begin{displaymath}
0\neq \varphi(\overline{v})=\varphi(\overline{v})\varphi(\overline{\varepsilon_{\mathfrak{s}(v)}})
=\varphi(\overline{v})\varphi(\overline{\varepsilon_{\mathfrak{t}(w)}})\quad\text{and}\quad 0\neq\varphi(\overline{w})=\varphi(\overline{\varepsilon_{\mathfrak{t}(w)}})\varphi(\overline{w}).
\end{displaymath}
This implies $\varphi(\overline{vw})=\varphi(\overline{v}\cdot\overline{w})=\varphi(\overline{v})\varphi(\overline{w})\neq 0$
since $A$ is hereditary. However, we have $\varphi(\overline{vw})=0$ since $\overline{v}\cdot\overline{w}=\overline{vw}=\overline{u_j}=0$,
a contradiction. The claim follows.
\end{proof}

\begin{corollary}\label{cr8}
If $I,J\in \mathcal{CI}(A)$, then we have $I+J\in\mathcal{CI}(A)$.
\end{corollary}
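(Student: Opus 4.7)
My plan is to reduce the statement to the combinatorial characterization of complementary ideals given by Lemma~\ref{l2}. By that lemma, it suffices to show that every element of the minimal generating set $G(I+J)$ is a path of length at most $1$. The key intermediate claim I would prove is the inclusion $G(I+J)\subseteq G(I)\cup G(J)$; once this is established, the conclusion is immediate because each element of $G(I)\cup G(J)$ has length $\leq 1$ by hypothesis and Lemma~\ref{l2}, so Lemma~\ref{l2} applied in the other direction finishes the argument.

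To prove the inclusion, I would first recall (from \cite[Lemma~3]{Zh}) that $G(K)$, for any ideal $K\in\mathcal{I}(A)$, consists precisely of the paths in $K$ which are minimal with respect to the partial order $\preceq$; equivalently, $G(K)$ is the unique antichain of paths that generates $K$ as a two-sided ideal. Since $A$ is a path algebra and distinct paths are linearly independent, a path $w$ lies in an ideal generated by a set $S$ of paths if and only if $v\preceq w$ for some $v\in S$. Applying this with $S=G(I)\cup G(J)$, which generates $I+J$: for any $w\in G(I+J)$ there exists $u\in G(I)\cup G(J)$ with $u\preceq w$. Since $u\in I+J$ as well, the minimality of $w$ in $I+J$ forces $u=w$, hence $w\in G(I)\cup G(J)$.

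Putting these pieces together, any $w\in G(I+J)$ satisfies $\mathfrak{l}(w)\leq 1$ by Lemma~\ref{l2} applied to $I$ or to $J$, and Lemma~\ref{l2} then yields $I+J\in\mathcal{CI}(A)$. The only subtle point, and where I expect to spend most of the care, is justifying the characterization of $G(K)$ as the set of $\preceq$-minimal paths in $K$ and the fact that a single path $w$ belonging to the ideal generated by $G(I)\cup G(J)$ must satisfy $u\preceq w$ for some concrete $u$ in that set. Both facts follow quickly from the linear independence of paths in $A$ and the fact that the partial order $\preceq$ is exactly the factorization relation $w=aub$, so there is no serious obstacle; everything else is bookkeeping.
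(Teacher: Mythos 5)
Your proposal is correct and takes essentially the same route as the paper: it invokes Lemma~\ref{l2} together with the inclusion $G(I+J)\subset G(I)\cup G(J)$, which is exactly the observation the paper's one-line proof relies on. The extra detail you supply to justify that inclusion (characterizing $G(K)$ as the $\preceq$-minimal paths in $K$ and using linear independence of paths) is a correct fleshing-out of what the paper leaves as an observation.
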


\begin{proof}
This follows from Lemma~\ref{l2} and the observation  that $G(I+J)\subset G(I)\cup G(J)$.
\end{proof}

\subsection{Identity bimodules twisted by a family of endomorphisms}

For any ideal $I\in\mathcal{CI}(A)$, we denote by $\varphi_I:A\to A$ the composition of the canonical map $\overline{\cdot}: A\to A/I$ with the splitting $A/I\to A$ constructed in the proof of
Lemma~\ref{l2}. Then $\varphi_I$ is the identity on all paths in $A\backslash I$ and zero
on all paths in $I$.

Consider the identity $A$-$A$-bimodule $A={}_AA_A$. Given a unital algebra endomorphism $\varphi$ of $A$,
define a new bimodule ${}^{\varphi}A$ to be equal to $A$ as a vector space but with the bimodule action given by
\begin{displaymath}
b\cdot a\cdot c:=\varphi(b)ac \qquad\text{ for all } \quad a,b,c\in A.
\end{displaymath}
In particular, for any ideal $I\in\mathcal{CI}(A)$ such that $G(I)=\{u_1,u_2,\dots,u_k\}$ with
$\mathfrak{l}(u_i)=1$ for all $1\leq i\leq k$,  we have the corresponding $A$-$A$-bimodule
${}^{\varphi_I}A$.

Let $\mathcal{CI}^{(1)}(A)$ denote the subset of $\mathcal{CI}(A)$ consisting of all complementary ideals
generated by paths of length 1. Then the set  $\mathcal{CI}^{(1)}(A)$ has $2^{|Q_1|}$ elements.
Similarly to Corollary~\ref{cr8}, we have $I+J\in\mathcal{CI}^{(1)}(A)$ for all $I,J\in \mathcal{CI}^{(1)}(A)$.

\begin{example}
{\rm Let $A=\Bbbk Q$, where $Q$ is given by the following picture:
\begin{displaymath}
\xymatrix{1\ar[r]^\alpha&2\ar[d]_\gamma\ar[r]^\beta&3\ar[r]^\delta&5\\
&4&&}
\end{displaymath}
Set $I=\langle\beta\rangle$. The $A$-$A$-bimodule ${}^{\varphi_I}A$ decomposes into two
$A$-$A$-subbimodules as follows:
\begin{displaymath}
\xymatrix{\varepsilon_1\ar[d]&&&&&&\\
\alpha\ar[d]&\varepsilon_2\ar@{-->}[l]\ar[d]&&&\beta\alpha\ar[d]&\beta\ar@{-->}[l]\ar[d]&\varepsilon_3\ar@{-->}[l]\ar[d]&\\
\gamma\alpha&\gamma\ar@{-->}[l]&\varepsilon_4\ar@{-->}[l]&&\delta\beta\alpha&\delta\beta\ar@{-->}[l]&\delta\ar@{-->}[l]
&\varepsilon_5\ar@{-->}[l]}
\end{displaymath}
Both subbimodules are described by a basis consisting of paths. Solid arrows depict the left action while
dashed arrows depict the right action. The left subbimodule corresponds to the identity bimodule of the
subquiver $\xymatrix{1 \ar[r]^\alpha & 2\ar[r]^\gamma&4}$ and the right one
is exactly the ideal of $A$ generated by $\varepsilon_3,\varepsilon_5$.
}
\end{example}

For a tree algebra $A$ and $I\in \mathcal{CI}^{(1)}(A)$ one can calculate all indecomposable
components of the $A$-$A$-bimodule ${}^{\varphi_I}A$. However, we did not find any uniform way to describe
them in the general case. At the same time, in Subsection~\ref{s4.4} we propose such a description in the
case of the uniformly oriented Dynkin quiver of type $A_n$, where $n$ is a positive integer.

\subsection{New finitary $2$-categories for tree algebras}

Let $\mathcal{C}$ be a small category equivalent to $A$-mod. Define the $2$-category $\mathscr{D}_{\mathcal{CI}^{(1)}(A)}$ to have
\begin{itemize}
\item one object $\mathtt{i}$ (which we identify with $\mathcal{C}$);
\item as $1$-morphisms, all functors given, up to equivalence with $A$-mod, by
functors from the additive closure of the identity functor and all ${}^{\varphi_I}A\otimes_A{}_-$, where $I\in\mathcal{CI}^{(1)}(A)$ ;
\item as $2$-morphisms, all  natural transformations of functors.
\end{itemize}
To justify that this indeed defines a $2$-category, we need the following statement.

\begin{lemma}\label{ll9}
For every two ideals $I,J\in \mathcal{CI}^{(1)}(A)$, the $A$-$A$-bimodules
${}^{\varphi_I}A\otimes_A{}^{\varphi_J}A$ and ${}^{\varphi_{I+J}}A$ are isomorphic.
\end{lemma}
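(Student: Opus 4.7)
The plan is to exhibit an explicit pair of mutually inverse $A$-$A$-bimodule homomorphisms. Define
\[
\mu\colon {}^{\varphi_I}A \otimes_A {}^{\varphi_J}A \longrightarrow {}^{\varphi_{I+J}}A, \qquad x\otimes y \longmapsto \varphi_J(x)\,y,
\]
and
\[
\nu\colon {}^{\varphi_{I+J}}A \longrightarrow {}^{\varphi_I}A \otimes_A {}^{\varphi_J}A, \qquad z\longmapsto 1\otimes z.
\]
I will show that these maps are well-defined bimodule morphisms and that they are mutually inverse.

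The crux of the argument is to first establish the identity
\[
\varphi_J \circ \varphi_I = \varphi_{I+J}
\]
of algebra endomorphisms of $A$. Each $\varphi_K$ for $K\in\mathcal{CI}^{(1)}(A)$ acts on the path basis by fixing paths outside $K$ and killing paths inside $K$. Thus $\varphi_J(\varphi_I(p))$ equals $p$ if $p$ lies neither in $I$ nor in $J$, and $0$ otherwise. Since $I$ and $J$ are both spanned by paths, a path $p$ belongs to $I+J$ if and only if $p\in I$ or $p\in J$; hence $\varphi_J\circ\varphi_I$ and $\varphi_{I+J}$ agree on the path basis, and therefore on all of $A$.

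With this identity in hand, the verification that $\mu$ and $\nu$ are inverse bimodule homomorphisms is a short bookkeeping exercise. Well-definedness of $\mu$ on the tensor product reduces to the equality $\varphi_J(xa)y=\varphi_J(x)\varphi_J(a)y$, which holds because $\varphi_J$ is an algebra homomorphism by Lemma~\ref{l2}. The right $A$-actions are preserved trivially, and for the (twisted) left actions one computes
\[
\mu(\varphi_I(a)x\otimes y)=\varphi_J(\varphi_I(a))\,\varphi_J(x)\,y=\varphi_{I+J}(a)\cdot\mu(x\otimes y),
\]
while, using the defining tensor relation $mb\otimes n=m\otimes \varphi_J(b)n$,
\[
a\cdot\nu(z)=\varphi_I(a)\otimes z=1\otimes \varphi_J(\varphi_I(a))\,z = 1\otimes\varphi_{I+J}(a)\,z=\nu(a\cdot z).
\]
Finally, $\mu\circ\nu(z)=\varphi_J(1)\,z=z$ because $\varphi_J$ is unital, and $\nu\circ\mu(x\otimes y)=1\otimes\varphi_J(x)\,y=1\cdot x\otimes y=x\otimes y$ by the same tensor relation.

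The only conceptual ingredient is the identity $\varphi_J\circ\varphi_I=\varphi_{I+J}$; once it is available, everything else is formal. The hypothesis that $I$ and $J$ are generated by arrows enters only through Lemma~\ref{l2}, which guarantees that the three maps $\varphi_I$, $\varphi_J$, and $\varphi_{I+J}$ are algebra homomorphisms in the first place (note that $I+J\in\mathcal{CI}^{(1)}(A)$ by the remark following Corollary~\ref{cr8}).
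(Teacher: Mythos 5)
Your proof is correct and takes essentially the same route as the paper: the key observation $\varphi_J\circ\varphi_I=\varphi_{I+J}$ on the path basis, followed by the bimodule map $x\otimes y\mapsto \varphi_J(x)y$ (the paper's $\kappa_{(I,J)}$). The only difference is that you exhibit the inverse $\nu\colon z\mapsto 1\otimes z$ explicitly, where the paper merely asserts bijectivity is straightforward.
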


\begin{proof}
Note that the map $\varphi_J\circ\varphi_I$ is the identity when restricted to paths in $A\setminus(I+J)$ and zero
when restricted to paths in $I+J$. The map $\varphi_{I+J}$ has the same properties. Since $A$ has a basis consisting
of paths, we get $\varphi_J\circ\varphi_I=\varphi_{I+J}$. Let $\kappa_{(I,\,J)}$ be the map from
${}^{\varphi_I}A\otimes_A{}^{\varphi_J}A$ to ${}^{\varphi_{I+J}}A$ sending $b\otimes c$ to $\varphi_J(b)c$
for any $b,c\in A$. It is easy to check that this map is well-defined. Moreover, for any $a,a'\in A$, we have
\begin{eqnarray*}
\kappa_{(I,\,J)}(a\cdot (b\otimes c)\cdot a')&=&\kappa_{(I,\,J)}(\varphi_I(a)b\otimes ca')=\varphi_J(\varphi_I(a)b)ca'\\
&=&\varphi_J\circ\varphi_I(a)\varphi_J(b)ca'=\varphi_{I+J}(a)\varphi_J(b)ca'\\
&=&a\cdot\kappa_{(I,\,J)}(b\otimes c)\cdot a'.
\end{eqnarray*}
Therefore $\kappa_{(I,J)}$ is a $A$-$A$-bimodule homomorphism. It is straightforward to verify that
it is bijective. The claim follows.
\end{proof}

\begin{proposition}
The category $\mathscr{D}_{\mathcal{CI}^{(1)}(A)}$ is a finitary $2$-category.
\end{proposition}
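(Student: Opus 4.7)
The plan is to verify the three bullet-point conditions in the definition of a finitary $2$-category (finitely many objects, finitary hom-categories with biadditive $\Bbbk$-linear horizontal composition, and indecomposable identity $1$-morphisms), the only non-obvious part being that the given class of $1$-morphisms is closed under composition and genuinely has the finitary structure of $A$-$A$-bimodules.

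First I would note that there is a single object, so the object-count condition is trivial. The class of $1$-morphisms is defined as the additive closure of $\mathbbm{1}_{\mathtt{i}}$ (which corresponds to the identity bimodule ${}_AA_A = {}^{\varphi_0}A$) and the functors ${}^{\varphi_I}A \otimes_A {}_-$ for $I \in \mathcal{CI}^{(1)}(A)$. Closure under composition follows from Lemma~\ref{ll9}: the composition ${}^{\varphi_I}A \otimes_A {}^{\varphi_J}A \otimes_A {}_-$ is isomorphic to ${}^{\varphi_{I+J}}A \otimes_A {}_-$, and $I+J$ again lies in $\mathcal{CI}^{(1)}(A)$ by the remark preceding Lemma~\ref{ll9} (which is the analogue of Corollary~\ref{cr8} for ideals generated by length-$1$ paths). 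Distributivity of tensor product over direct sums then gives closure of the additive closure under $\circ$.

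Next I would check that the hom-category $\mathscr{D}_{\mathcal{CI}^{(1)}(A)}(\mathtt{i},\mathtt{i})$ lies in $\mathfrak{A}_{\Bbbk}^f$. The set $\mathcal{CI}^{(1)}(A)$ is finite (of size $2^{|Q_1|}$), so there are finitely many generating bimodules. Each ${}^{\varphi_I}A$ is finite-dimensional as a $\Bbbk$-vector space, hence by Krull–Schmidt decomposes into finitely many indecomposable $A$-$A$-bimodules. Consequently the additive closure has only finitely many isomorphism classes of indecomposables. Natural transformations between tensor functors ${}^{\varphi_I}A \otimes_A {}_-$ and ${}^{\varphi_J}A \otimes_A {}_-$ are in bijection with $A$-$A$-bimodule homomorphisms ${}^{\varphi_I}A \to {}^{\varphi_J}A$, and these form finite-dimensional $\Bbbk$-vector spaces. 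Idempotent splitness is inherited from the ambient abelian category of $A$-$A$-bimodules. Biadditivity and $\Bbbk$-linearity of horizontal composition are built into the tensor product structure.

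Finally, indecomposability of $\mathbbm{1}_{\mathtt{i}}$ amounts to indecomposability of ${}_AA_A$ as an $A$-$A$-bimodule, which in turn follows from $A$ being indecomposable as a $\Bbbk$-algebra; the latter holds because $Q$ is a connected quiver. I expect the routine but slightly delicate part to be spelling out the identification of $2$-morphisms with bimodule maps well enough that finite-dimensionality and idempotent splitness are manifest; however, since we are explicitly working with tensor functors on $A$-mod where $A$ is a finite-dimensional algebra, no conceptual obstacle arises, and the proposition follows by assembling the points above.
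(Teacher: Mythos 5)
Your proof is correct and follows essentially the same route as the paper: one object by construction, closure under composition via Lemma~\ref{ll9} together with the closure of $\mathcal{CI}^{(1)}(A)$ under sums, finitely many indecomposable $1$-morphisms from finiteness of $\mathcal{CI}^{(1)}(A)$ plus finite-dimensionality of $A$, finite-dimensional $2$-morphism spaces via identification with $A$-$A$-bimodule homomorphisms, and indecomposability of $\mathbbm{1}_{\mathtt{i}}$ from connectedness of $Q$. Your write-up spells out a few routine points (idempotent splitting, $\Bbbk$-bilinearity) that the paper leaves implicit, but the argument is the same.
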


\begin{proof}
By definition, $\mathscr{D}_{\mathcal{CI}^{(1)}(A)}$ has one object. Since the tree algebra $A$ is connected,
the identity $A$-$A$-bimodule $A={}^{\varphi_0}A$ is indecomposable, which means that the identity functor $\mathbbm{1}_{\mathtt{i}}\cong A\otimes_A{}_-$ is indecomposable as well. From Lemma~\ref{ll9} it follows
that $\mathscr{D}_{\mathcal{CI}^{(1)}(A)}$ is closed with respect to composition of $1$-morphisms.

Note that there are finitely many ideals $I$ in $\mathcal{CI}^{(1)}(A)$ and the tree algebra $A$ is
finite dimensional since $Q$ is finite. Each functor ${}^{\varphi_I}A\otimes_A{}_-$ thus has finitely many
direct summands. Therefore the category $\mathscr{D}_{\mathcal{CI}^{(1)}(A)}(\mathtt{i},\mathtt{i})$ has
finitely many indecomposable $1$-morphisms up to isomorphism. Since $2$-morphisms are just $A$-$A$-bimodule homomorphism between the corresponding finite dimensional $A$-$A$-bimodules, dimensions of
the corresponding spaces are all finite.
\end{proof}

\subsection{Cells in $\mathscr{D}_{\mathcal{CI}^{(1)}(A)}$ associated to a quiver of type $A_n$}\label{s4.4}

Let $Q$ be the following quiver:
\begin{equation}\label{qa}
\xymatrix{1\ar[r]^{\alpha_1}&2\ar[r]^{\alpha_2}&3\ar[r]^{\alpha_3}&\cdots\ar[r]^{\alpha_{n-1}}&n}.
\end{equation}
We have
\begin{equation}\label{ee111}
\dim \varepsilon_j A\varepsilon_i=\left\{\begin{array}{ll}1, & \text{if } i\leq j;\\
0, & \text{otherwise}.\end{array}\right.
\end{equation}
Then the identity bimodule $_AA_A$ can be depicted as the following planar graph:
\begin{equation}\label{i1}
\xymatrix{\bullet\ar[d]&&&&\\
\bullet\ar[d]&\bullet\ar@{-->}[l]\ar[d]&&&&\\
\bullet\ar[d]&\bullet\ar[d]\ar@{-->}[l]&\bullet\ar@{-->}[l]\ar[d]&&&\\
\vdots\ar[d]&\vdots\ar[d]\ar@{-->}[l]&\vdots\ar[d]\ar@{-->}[l]&\ddots\ar@{-->}[l]\ar[d]&&\\
\bullet\ar[d]&\bullet\ar[d]\ar@{-->}[l]&\cdots\ar[d]\ar@{-->}[l]&\bullet\ar@{-->}[l]\ar[d]&\bullet\ar[d]\ar@{-->}[l]&\\
\bullet&\bullet\ar@{-->}[l]&\cdots\ar@{-->}[l]&\bullet\ar@{-->}[l]&\bullet\ar@{-->}[l]
&\bullet\ar@{-->}[l]\\
}
\end{equation}
Here both the last row and the first column have n bullets and the bullet in the position $(i,j)$ stands
for the unique path from $j$ to $i$, for all $1\leq j\leq i\leq n$. The left action is depicted by solid arrows
and the right action is depicted by dashed arrows. For example, in the case $n=3$ diagram \eqref{i1}
reads as follows:
\begin{displaymath}
\xymatrix{\varepsilon_1\ar[d]_{\alpha_1\cdot}&&&&\\
\alpha_1\ar[d]_{\alpha_2\cdot}&\varepsilon_2\ar@{-->}[l]_{\cdot\alpha_1}\ar[d]_{\alpha_2\cdot}&&&&\\
\alpha_2\alpha_1&\alpha_2\ar@{-->}[l]^{\cdot\alpha_1}&\varepsilon_2\ar@{-->}[l]^{\cdot\alpha_2}&&&
}
\end{displaymath}

For each $1\leq i\leq n$, denote by $J_i$ the ideal generated by elements
$\varepsilon_i,\varepsilon_{i+1},\ldots,\varepsilon_n$. Then each $J_i$ is an indecomposable idempotent ideal
and $J_1$ is the identity $A$-$A$-bimodule $_AA_A$. Moreover, we have
\begin{displaymath}
J_1\supset J_2\supset\cdots\supset J_n.
\end{displaymath}
For any $1\leq i\leq j\leq n$, we define $M_{i,j}:=J_i/J_{j+1}$ (here $J_{n+1}:=0$),
which inherits from $J_i$ the structure  of an $A$-$A$-bimodule. It follows from the
definition that each $M_{i,j}$ is indecomposable and that $M_{i,n}=J_i$.
Note that the bimodule $M_{i,j}$ has a basis consisting of all paths listed
in the rows $i,i+1,\dots,j$ of the diagram \eqref{i1}.

For any ideal $I\in\mathcal{CI}^{(1)}(A)$, we have $G(I)\subset Q_1$.
Let us assume that
\begin{displaymath}
G(I)=\{\alpha_{i_1},\alpha_{i_2},\ldots,\alpha_{i_s}\},\quad \text{where } 1\leq i_1<i_2<\dots<i_s<n.
\end{displaymath}

\begin{lemma}\label{lemnew}
For $I$ as above, we have a decomposition
\begin{displaymath}
{}^{\varphi_{I}}A\cong M_{1,i_1}\oplus M_{i_1+1,i_2}\oplus\dots\oplus M_{i_s+1,n}.
\end{displaymath}
\end{lemma}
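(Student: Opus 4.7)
The plan is to realize the claimed decomposition via an explicit ``row partition'' of the diagram~\eqref{i1}: we cut the tower of rows precisely at the heights $i_1 < i_2 < \cdots < i_s$. Adopting the conventions $i_0 := 0$ and $i_{s+1} := n$, for each $1 \leq t \leq s+1$ let $N_t \subset {}^{\varphi_I}A$ denote the subspace spanned by all paths $p : b \to a$ with $i_{t-1} + 1 \leq a \leq i_t$. Since the rows of diagram~\eqref{i1} partition the path basis of $A$, we immediately get a vector space decomposition ${}^{\varphi_I}A = N_1 \oplus N_2 \oplus \cdots \oplus N_{s+1}$, so the content of the lemma is that each $N_t$ is an $A$-$A$-subbimodule of ${}^{\varphi_I}A$ and is isomorphic to $M_{i_{t-1}+1, i_t}$.

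First I would check that each $N_t$ is a subbimodule. The right action on ${}^{\varphi_I}A$ is untwisted and preserves the target vertex of a path, hence preserves each row, hence preserves $N_t$. For the twisted left action it suffices to check closure under multiplication by arrows $\alpha_k$ and by idempotents $\varepsilon_k$. On a path $p : b \to a$ in $N_t$ the element $\alpha_k$ acts as $\varphi_I(\alpha_k) p$, which vanishes unless $k = a$ and $\alpha_a \notin I$. Because $G(I) \subset Q_1$, an arrow $\alpha_k$ belongs to $I$ iff $k \in \{i_1, \ldots, i_s\}$. Thus when $a = i_t$ (the ``top'' of the block) we have $\alpha_{i_t} \in I$ and the left action is zero, while for $i_{t-1} + 1 \leq a < i_t$ we have $\alpha_a \notin I$ and the image $\alpha_a p$ is a path ending at $a + 1 \leq i_t$, which still lies in $N_t$. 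Action by idempotents clearly preserves rows, so $N_t$ is a subbimodule.

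Next I would identify $N_t$ with $M_{i_{t-1}+1,i_t} = J_{i_{t-1}+1}/J_{i_t+1}$. In the linear quiver~\eqref{qa} the ideal $J_i$ has a basis consisting of paths $b \to a$ with $a \geq i$ by~\eqref{ee111}, so $M_{i_{t-1}+1,i_t}$ has a basis consisting of paths $b \to a$ with $i_{t-1}+1 \leq a \leq i_t$, which is exactly the basis of $N_t$. Sending each such path in $N_t$ to its class in $M_{i_{t-1}+1,i_t}$ therefore defines a vector space isomorphism. Both sides carry the untwisted right action from $A$, so compatibility with right multiplication is automatic. For the left action, on $M_{i_{t-1}+1,i_t}$ the untwisted product $\alpha_a p$ for $p : b \to a$ is the path $b \to a+1$, which is zero in the quotient exactly when $a+1 > i_t$, i.e.\ when $a = i_t$; this matches the twisted action on $N_t$, which is zero exactly when $\alpha_a \in I$, i.e.\ when $a = i_t$. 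So the map is a bimodule isomorphism and the claimed decomposition follows.

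The main potential obstacle is purely bookkeeping: one must ensure that no arrow $\alpha_a$ with $i_{t-1} < a < i_t$ accidentally belongs to $I$, and that the twisted left-zero on row $i_t$ of $N_t$ agrees exactly with quotienting out $J_{i_t+1}$ in $M_{i_{t-1}+1,i_t}$. Both are straightforward from the description of $G(I)$ as a set of arrows and from the linearity of the quiver, so I expect no deeper difficulty beyond organizing these index matches carefully.
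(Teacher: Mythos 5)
Your proof is correct and follows essentially the same strategy as the paper's: both identify the path basis of ${}^{\varphi_I}A$ with the union of the path bases of the summands $M_{i_{t-1}+1,i_t}$ and verify that the identity map on paths respects both the (untwisted) right action and the (twisted) left action, the key observation being that left multiplication by each $\alpha_{i_m}$ annihilates both sides. You simply spell out the row-partition bookkeeping that the paper leaves terse.
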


\begin{proof}
From the above we have that both the left hand side and the right hand side have natural
bases consisting of all paths. We claim that the identity map on the paths gives rise to
an isomorphism of bimodules. That this map is an isomorphism of right modules follows directly
by construction. That this map is an isomorphism of left modules follows from the definitions
and the observation that the left multiplication with each $\alpha_{i_m}$, where $1\leq m\leq s$,
annihilates both the left hand side and the right hand side.
\end{proof}

Informally, one can say that the decomposition of ${}^{\varphi_{I}}A$ in Lemma~\ref{lemnew}
is obtained by  cutting all $i_m$-th rows of vertical arrows in \eqref{i1}, where $1\leq m\leq s$.

For each $1\leq i\leq j\leq  n$, denote by $F_{i,j}$ the functor
\begin{displaymath}
M_{i,j}\otimes_A{}_-: A\text{-mod}\to A\text{-mod}.
\end{displaymath}
We loosely identify $F_{i,j}$ with a corresponding endofunctor of $\mathcal{C}$.
Directly from Lemmata~\ref{ll9} and \ref{lemnew} and the definitions, we obtain:

\begin{corollary}
The list
\begin{equation}\label{de1}
\{F_{i,j}\,\,:\,\,1\leq i\leq j\leq  n\}
\end{equation}
is a complete and irredundant list of indecomposable $1$-morphisms in $\mathscr{D}_{\mathcal{CI}^{(1)}(A)}$,
up to isomorphism.
\end{corollary}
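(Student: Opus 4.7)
The plan is to extract the classification directly from Lemma~\ref{lemnew}. By construction, the $1$-morphisms of $\mathscr{D}_{\mathcal{CI}^{(1)}(A)}$ are, up to isomorphism, exactly the additive closure of the bimodule tensor functors associated with $A$ and with ${}^{\varphi_I}A$ for $I \in \mathcal{CI}^{(1)}(A)$. Hence classifying indecomposable $1$-morphisms up to isomorphism is the same as classifying indecomposable $A$-$A$-bimodule summands of these bimodules.

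First I would apply Lemma~\ref{lemnew} to each generator, producing a decomposition of ${}^{\varphi_I}A$ into a direct sum of the bimodules $M_{a,b}$, each of which is already known from the text preceding the statement to be indecomposable. This immediately shows every indecomposable $1$-morphism is of the form $F_{a,b}$ for some $1\leq a\leq b\leq n$. To see that no pair is missing, I would observe that varying $G(I)=\{\alpha_{i_1},\ldots,\alpha_{i_s}\}$ over all subsets of $\{\alpha_1,\ldots,\alpha_{n-1}\}$ realizes all partitions of $\{1,\ldots,n\}$ into consecutive integer intervals via the formula of Lemma~\ref{lemnew}; hence every $F_{i,j}$ appears, by taking $I$ generated by $\alpha_{i-1}$ (whenever $i>1$) together with $\alpha_j$ (whenever $j<n$), so that the interval $[i,j]$ occurs as one of the blocks. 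The boundary case $(i,j)=(1,n)$ is covered by the identity functor $\mathbbm{1}_\mathtt{i}$ itself.

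The remaining step is to establish irredundancy of the list. For this I would restrict the $M_{i,j}$ to left $A$-modules: from $M_{i,j}=J_i/J_{j+1}$ and \eqref{ee111}, one sees that $M_{i,j}\cong\bigoplus_{k=i}^{j}A\varepsilon_k$ as a left $A$-module, so the support $\{k\in\{1,\ldots,n\}:\varepsilon_k\cdot M_{i,j}\neq 0\}=\{i,i+1,\ldots,j\}$ is a bimodule isomorphism invariant which determines the pair $(i,j)$. I do not anticipate a genuine obstacle here; the main technical content, namely the decomposition of ${}^{\varphi_I}A$ and the indecomposability of each $M_{i,j}$, has already been established in the preparatory material, so the corollary amounts to assembling these pieces combinatorially.
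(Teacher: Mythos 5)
Your plan matches the paper's: the corollary is extracted from Lemma~\ref{lemnew} (decomposition of ${}^{\varphi_I}A$ into the indecomposable pieces $M_{a,b}$) together with the definition of $\mathscr{D}_{\mathcal{CI}^{(1)}(A)}$ as an additive closure, and your completeness argument — cutting the identity bimodule at $\alpha_{i-1}$ and $\alpha_j$ to exhibit $M_{i,j}$ as a direct summand — is exactly the right observation.

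However, your irredundancy step contains a concrete error. The claim that $M_{i,j}\cong\bigoplus_{k=i}^{j}A\varepsilon_k$ as a \emph{left} $A$-module is false; for instance $M_{1,1}$ is the $1$-dimensional bimodule spanned by $\varepsilon_1$, while $A\varepsilon_1$ has dimension $n$. What is true is the \emph{right}-module decomposition $M_{i,j}\cong\bigoplus_{k=i}^{j}\varepsilon_k A$: since $M_{i,j}=J_i/J_{j+1}$ has basis the paths lying in rows $i,\dots,j$ of diagram~\eqref{i1}, and row $r$ consists precisely of the paths with target $r$, the subspace $\varepsilon_r M_{i,j}$ is the span of row $r$ and coincides with $\varepsilon_r A$. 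Your actual invariant — the set $\{c:\varepsilon_c M_{i,j}\neq 0\}$, which indeed equals $\{i,i+1,\dots,j\}$ and is an isomorphism invariant of the bimodule — is correct and does separate the $M_{i,j}$, but it should be read off directly from the row description (or from the right-module decomposition just given), not deduced from the incorrect left-module isomorphism. With that one replacement the argument is sound and closes the proof in essentially the same way the paper intends.
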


Now we can explicitly describe composition of $1$-morphisms in $\mathscr{D}_{\mathcal{CI}^{(1)}(A)}$.

\begin{lemma}\label{lem13new}
For any $1\leq i\leq j\leq n$ and $1\leq i'\leq j'\leq n$, we have
\begin{displaymath}
F_{i,j}\circ F_{i',j'}\cong
\begin{cases}
F_{\max(i,i'),\min(j,j')},&  \max(i,i')\leq \min(j,j');\\
0, & \text{otherwise}.
\end{cases}
\end{displaymath}
\end{lemma}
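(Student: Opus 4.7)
My plan is to compute the bimodule $M_{i,j}\otimes_A M_{i',j'}$ directly and identify it. The starting observation is that, as a right $A$-module, each $M_{i,j}=J_i/J_{j+1}$ is projective: grouping the path basis by target yields
$$M_{i,j}\;\cong\;\bigoplus_{k=i}^{j}\varepsilon_k A,$$
since the right $A$-action preserves targets and every path with target $k\in[i,j]$ survives the quotient by $J_{j+1}$. Combining this with the standard identification $\varepsilon_k A\otimes_A N\cong\varepsilon_k N$, I would obtain
$$M_{i,j}\otimes_A M_{i',j'}\;\cong\;\bigoplus_{k=i}^{j}\varepsilon_k M_{i',j'}.$$
Now $\varepsilon_k M_{i',j'}$ equals $\varepsilon_k A$ when $k\in[i',j']$ and is zero otherwise, so the tensor product simplifies to $\bigoplus_{k\in[\max(i,i'),\,\min(j,j')]}\varepsilon_k A$. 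When $\max(i,i')>\min(j,j')$ the indexing set is empty, which already gives $F_{i,j}\circ F_{i',j'}=0$ and covers the second case of the lemma.

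In the remaining case $\max(i,i')\leq\min(j,j')$, I would produce an explicit bimodule isomorphism by path composition. Define
$$\mu:M_{i,j}\otimes_A M_{i',j'}\longrightarrow M_{\max(i,i'),\,\min(j,j')},\qquad m\otimes n\longmapsto mn,$$
where $mn$ is computed in $A$ and then read modulo $J_{\min(j,j')+1}$. Associativity in $A$ shows that the tensor relation $ma\otimes n=m\otimes an$ is respected, so $\mu$ is well defined, and it visibly commutes with both one-sided actions. For surjectivity, a basis path $r$ of $M_{\max(i,i'),\,\min(j,j')}$ with target $k\in[\max(i,i'),\min(j,j')]$ lifts to $\varepsilon_k\otimes r$: this element makes sense because $\varepsilon_k\in M_{i,j}$ (as $k\in[i,j]$) and $r\in M_{i',j'}$ (as $k\in[i',j']$), and $\mu(\varepsilon_k\otimes r)=r$. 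The dimension count from the first paragraph matches source and target of $\mu$, forcing $\mu$ to be bijective, which translates to $F_{i,j}\circ F_{i',j'}\cong F_{\max(i,i'),\,\min(j,j')}$.

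The main technical input is the right-projective decomposition of $M_{i,j}$; an alternative, more in line with the surrounding material, would be to observe that $M_{i,j}$ is a summand of ${}^{\varphi_I}A$ for $I=\langle\alpha_{i-1},\alpha_j\rangle$ (with boundary generators omitted when $i=1$ or $j=n$), and likewise $M_{i',j'}$ is a summand of ${}^{\varphi_J}A$ for the analogous $J$. Lemma~\ref{ll9} then identifies ${}^{\varphi_I}A\otimes_A{}^{\varphi_J}A$ with ${}^{\varphi_{I+J}}A$, whose indecomposable summands are described by Lemma~\ref{lemnew}, and matching indecomposables on both sides via Krull--Schmidt (exploiting the distinct Peirce supports $\varepsilon_k\,?\,\varepsilon_l$) recovers the formula. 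The delicate point in either route is that the full bimodule structure, not merely the right-module structure, on the tensor product must match $M_{\max(i,i'),\,\min(j,j')}$; this is where I expect the most care to be needed, but it is forced by the fact that each indecomposable $M_{p,q}$ is determined up to isomorphism by its Peirce support.
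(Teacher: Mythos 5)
Your main argument is correct and follows a genuinely different route from the paper's. The paper proceeds structurally: it takes $I_{a-1}=\langle\alpha_{a-1}\rangle$ and $I_b=\langle\alpha_b\rangle$, applies Lemma~\ref{ll9} to identify ${}^{\varphi_{I_{a-1}}}A\otimes_A{}^{\varphi_{I_b}}A$ with ${}^{\varphi_{I_{a-1}+I_b}}A$, expands both sides by Lemma~\ref{lemnew}, and then invokes Krull--Schmidt together with the observation about which idempotents $\varepsilon_c$ appear in each summand to isolate the eight ``atomic'' isomorphisms~\eqref{may24-2} and~\eqref{may24-3}. The general case is then assembled by writing $M_{i,j}\cong M_{1,j}\otimes_A M_{i,n}$ and shuffling factors. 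Your proof instead computes the tensor product directly: the right-projective decomposition $M_{i,j}\cong\bigoplus_{k=i}^{j}\varepsilon_kA$ plus the Peirce identity $\varepsilon_kA\otimes_A N\cong\varepsilon_kN$ give the dimension and the vanishing case at once, and the explicit multiplication map $\mu(m\otimes n)=mn$ pins down the bimodule structure. Your verification is complete: the target of any product $mn$ with $m\in J_i$, $n\in J_{i'}$ is at least $\max(i,i')$, so the image lands in $J_{\max(i,i')}$; lifting ambiguities by $J_{j+1}$ or $J_{j'+1}$ both push the target past $\min(j,j')$, so $\mu$ descends; surjectivity via $\varepsilon_k\otimes r$ and the matching dimension count finish it. The trade-off is that your route is more elementary and self-contained (it does not quote Lemma~\ref{ll9} or Krull--Schmidt), at the cost of explicit path-level bookkeeping, whereas the paper's route is shorter once Lemma~\ref{lemnew} is in hand and makes the commutativity $M_{a,b}\otimes_AM_{a',b'}\cong M_{a',b'}\otimes_AM_{a,b}$ visible for free. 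Your sketched alternative route (using $I=\langle\alpha_{i-1},\alpha_j\rangle$) is essentially the paper's argument with two-generator ideals in place of one-generator ones.
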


\begin{proof}
The top of the $A$-$A$-bimodule $M_{a,b}$, where $1\leq a\leq b\leq n$, is given by
the idempotent paths $\varepsilon_{c}$ for $a\leq c\leq b$. This implies that
$M_{a,b}\otimes_A M_{a',b'}$, where $1\leq a'\leq b'\leq n$, is nonzero provided that
$[a,b]\cap [a',b']\neq0$, moreover, if some $\varepsilon_{c}$ appears in both
$M_{a,b}$ and $M_{a',b'}$, it also appears in $M_{a,b}\otimes_A M_{a',b'}$.

For $1\leq i< n$, denote by $I_i$ the ideal of $A$ generated by $\alpha_i$ and set
$I_0=I_n:=0$. Then, by Lemma~\ref{ll9}, we have
\begin{equation}\label{may24}
{}^{\varphi_{I_{a-1}}}A\otimes_A {}^{\varphi_{I_b}}A\cong {}^{\varphi_{I_{a-1}+I_b}}A.
\end{equation}
By Lemma~\ref{lemnew}, this can be written as
\begin{displaymath}
(M_{1,a-1}\oplus M_{a,n})\otimes_A (M_{1,b}\oplus M_{b+1,n})\cong
M_{1,a-1}\oplus M_{a,b}\oplus M_{b+1,n}.
\end{displaymath}
Using distributivity of the tensor product with respect to direct sums,
Krull-Schmidt property for bimodules, and taking
the previous paragraph into account, we obtain
\begin{equation}\label{may24-2}
\begin{array}{ccc}
M_{1,a-1}\otimes_A M_{1,b}&\cong& M_{1,a-1},\\
M_{a,n}\otimes_A M_{b+1,n}&\cong& M_{b+1,n},\\
M_{a,n}\otimes_A M_{1,b}&\cong& M_{a,b},\\
M_{1,a-1}\otimes_A M_{b+1,n}&=& 0.
\end{array}
\end{equation}
Swapping the factors in the left hand side of \eqref{may24} and using a similar argument,
we also obtain
\begin{equation}\label{may24-3}
\begin{array}{ccc}
M_{1,b}\otimes_A M_{1,a-1}&\cong& M_{1,a-1},\\
M_{b+1,n}\otimes_A M_{a,n}&\cong& M_{b+1,n},\\
M_{1,b}\otimes_A M_{a,n}&\cong& M_{a,b},\\
M_{b+1,n}\otimes_A M_{1,a-1}&=& 0.
\end{array}
\end{equation}

Using \eqref{may24-2} and \eqref{may24-3}, we can now compute:
\begin{displaymath}
\begin{array}{rcl}
M_{i,j}\otimes_A M_{i',j'}&\cong&
M_{1,j}\otimes_A M_{i,n}\otimes_A M_{1,j'}\otimes_A M_{i',n}\\
&\cong&
M_{1,j}\otimes_A M_{1,j'}\otimes_A M_{i,n}\otimes_A M_{i',n}\\
 &\cong&
M_{1,\min(j,j')}\otimes_A M_{\max(i,i'),n}.
\end{array}
\end{displaymath}
Now the claim follows by yet another application of \eqref{may24-2} and \eqref{may24-3}.
\end{proof}

\begin{remark}\label{rr1}
{\rm
From Lemma~\ref{lem13new}, we have:
\begin{enumerate}[$($i$)$]
\item\label{rr1a}
Each indecomposable $1$-morphism in $\mathscr{D}_{\mathcal{CI}^{(1)}(A)}$ is an idempotent.
\item\label{rr1b}
For any indecomposable $1$-morphisms $F,G$ in $\mathscr{D}_{\mathcal{CI}^{(1)}(A)}$,
we have $F\circ G\cong H\cong G\circ F$ and moreover $H$ is an indecomposable $1$-morphism.
Since all multiplicities of simple subbimodules in each $M_{i,j}$, where $1\leq i\leq j\leq n$,
are at most one, by \cite[Lemma~7]{Zh}, the endomorphism algebra of each
$M_{i,j}$ reduces to scalars. We fix a unique (up to a nonzero scalar) invertible
natural transformation $\epsilon^F$ determined by a family of isomorphisms $\epsilon^F(G):F\circ G
\to H\to G\circ F$ given by Lemma~\ref{ll9}.
 \item\label{rr1c} We have $F_{i,j}\circ F_{i',j'}\cong F_{i',j'}$ if and only if
$i\leq i'\leq j'\leq j$. We have $F_{i,j}\circ F_{i',j'}\cong F_{i,j}$ if and only if
$i'\leq i\leq j\leq j'$.
\end{enumerate}
}
\end{remark}

The following claim follows directly from the observation in
Remark~\ref{rr1}\eqref{rr1c}.

\begin{corollary}\label{cr14}
Each isomorphism class of $F_{i,j}$, where $1\leq i\leq j\leq n$, forms a two-sided cell, in particular,
also a left cell and a right cell.
\end{corollary}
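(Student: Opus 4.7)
The plan is to compute the two-sided preorder on the set $\{F_{i,j}\,:\,1\leq i\leq j\leq n\}$ directly using Lemma~\ref{lem13new}, and then to obtain the left and right cell structure as a formal consequence of the fact that both the left and the right equivalences refine the two-sided one.

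First I would observe that because composition distributes over direct sums and because, by Remark~\ref{rr1}\eqref{rr1b}, any non-zero composition of indecomposable $1$-morphisms in $\mathscr{D}_{\mathcal{CI}^{(1)}(A)}$ is again indecomposable, the relation $F_{i',j'}\geq_J F_{i,j}$ is equivalent to the existence of indecomposable factors $F_{a,b}$ and $F_{c,d}$ (possibly equal to the identity $F_{1,n}\cong\mathbbm{1}_{\mathtt{i}}$) such that $F_{i',j'}\cong F_{a,b}\circ F_{i,j}\circ F_{c,d}$. Two applications of Lemma~\ref{lem13new} then reduce any non-zero such product to $F_{\max(a,i,c),\,\min(b,j,d)}$. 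Consequently $F_{i',j'}\geq_J F_{i,j}$ holds if and only if $i\leq i'$ and $j'\leq j$, that is, exactly when $[i',j']\subseteq[i,j]$.

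From this description the equivalence $F_{i',j'}\sim_J F_{i,j}$ forces $[i,j]=[i',j']$, so each isomorphism class $\{F_{i,j}\}$ is a two-sided cell on its own. Since every left cell and every right cell is contained in a single two-sided cell (because $\sim_L$ and $\sim_R$ both imply $\sim_J$), each such singleton is automatically a left cell and a right cell as well.

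There is no real obstacle here: the only step beyond direct bookkeeping is the reduction from the definition of the preorder, which allows direct summands and arbitrary composable $1$-morphisms, to the case of indecomposable factors and genuine isomorphisms. This reduction is immediate from the Krull--Schmidt property together with Remark~\ref{rr1}\eqref{rr1b}, so the corollary follows at once from Lemma~\ref{lem13new}.
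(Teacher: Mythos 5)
Your argument is correct and lands in essentially the same place as the paper. The paper dispatches the corollary as an immediate consequence of the observation in Remark~\ref{rr1}\eqref{rr1c}, which is itself just the relevant specialization of Lemma~\ref{lem13new}; you instead run the full computation of the two-sided preorder directly from Lemma~\ref{lem13new}, obtaining $F_{i',j'}\geq_J F_{i,j}$ iff $[i',j']\subseteq[i,j]$, and then deduce the left and right cell statements from the refinement $\sim_L,\sim_R\Rightarrow\sim_J$. This is the same underlying combinatorics, spelled out in slightly more detail, and the reduction to indecomposable factors via Remark~\ref{rr1}\eqref{rr1b} and Krull--Schmidt is handled correctly.
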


\subsection{Quiver for the underlying algebra for the principal $2$-representation
of $\mathscr{D}_{\mathcal{CI}^{(1)}(A)}$ associated to a quiver of type $A_n$}

The aim of this subsection is to describe the quiver underlying the endomorphism
algebra of the additive generator for the category
$\mathscr{D}_{\mathcal{CI}^{(1)}(A)}(\mathtt{i},\mathtt{i})$, where $A$
is the path algebra of the quiver of type $A_n$  given by~\eqref{qa}.
We first have the following observation.

\begin{lemma}\label{lx1}
For any $1\leq i\leq j\leq n$ and $1\leq i'\leq j'\leq n$, we have
\begin{displaymath}
\dim \mathrm{Hom}_{A\text{-}A}(M_{i,j},M_{i',j'})\leq 1.
\end{displaymath}
Moreover, the equality holds if and only if $i'\leq i\leq j'\leq j$.
\end{lemma}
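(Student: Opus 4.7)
The plan is to compute the $\mathrm{Hom}$-space directly from the explicit basis. By the description following~\eqref{i1}, the bimodule $M_{i,j}$ has a $\Bbbk$-basis consisting of the unique paths $p_{s,t}$ from $s$ to $t$ for $1 \le s \le t \le n$ with $i \le t \le j$; the two-sided $A$-action satisfies $\varepsilon_a p_{s,t} \varepsilon_b = \delta_{a,t}\delta_{b,s}\, p_{s,t}$, left multiplication by $\alpha_t$ sends $p_{s,t}$ to $p_{s,t+1}$ when $t < j$ (and to $0$ when $t = j$), and right multiplication by $\alpha_{s-1}$ sends $p_{s,t}$ to $p_{s-1,t}$ when $s \ge 2$.

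My first step will be to use the idempotents $\varepsilon_k$ to pin down the form of an arbitrary bimodule map $\phi\colon M_{i,j} \to M_{i',j'}$. Since $\phi(p_{s,t}) = \varepsilon_t\, \phi(p_{s,t})\, \varepsilon_s$ and $\varepsilon_t M_{i',j'} \varepsilon_s$ is either one-dimensional (spanned by $p_{s,t}$, if $i' \le t \le j'$ and $s \le t$) or zero, one obtains scalars $c_{s,t} \in \Bbbk$ with $\phi(p_{s,t}) = c_{s,t}\, p_{s,t}$ when $i' \le t \le j'$, and $\phi(p_{s,t}) = 0$ otherwise. Combined with the next step, this will yield the bound $\dim \mathrm{Hom}_{A\text{-}A}(M_{i,j},M_{i',j'}) \le 1$.

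For the second step, I will argue that compatibility of $\phi$ with the left and right $\alpha_\star$-actions forces $c_{s,t} = c_{s',t'}$ for all index pairs lying in the common support, that is, with $\max(i,i') \le t,t' \le \min(j,j')$ and $1 \le s \le t$, $1 \le s' \le t'$: any two such pairs are connected by successive applications of $\alpha_k$ on the left and $\alpha_k$ on the right, all of which stay inside this common support, so the commutation relation propagates a single scalar $c$ everywhere. I will then show that $c = 0$ unless both $i' \le i$ and $j' \le j$ hold. If $i' > i$, take $t = i'$ in the overlap; then $p_{s,i'-1}$ lies in $M_{i,j} \setminus M_{i',j'}$ and so $\phi(p_{s,i'-1}) = 0$, and applying $\alpha_{i'-1}$ on the left gives $c\, p_{s,i'} = \alpha_{i'-1}\phi(p_{s,i'-1}) = 0$, forcing $c = 0$. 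Symmetrically, if $j' > j$, take $t = j$ in the overlap; then $\alpha_j p_{s,j} = 0$ in $M_{i,j}$ while $\alpha_j p_{s,j} = p_{s,j+1} \neq 0$ in $M_{i',j'}$, so $0 = \phi(\alpha_j p_{s,j}) = c\, p_{s,j+1}$, again forcing $c = 0$. Combined with the nonemptiness condition $\max(i,i') \le \min(j,j')$ for the overlap, these reduce exactly to $i' \le i \le j' \le j$.

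For the converse direction, when $i' \le i \le j' \le j$, I will verify that the natural composition $J_i \hookrightarrow J_{i'} \twoheadrightarrow J_{i'}/J_{j'+1} = M_{i',j'}$ has $J_{j+1}$ in its kernel (since $j+1 \ge j'+1$ implies $J_{j+1} \subseteq J_{j'+1}$), and hence descends to a nonzero bimodule homomorphism $M_{i,j} \to M_{i',j'}$. The hardest part will be the bookkeeping in the second step, particularly checking that the transport-of-scalars argument is not invalidated at the boundaries $t = \max(i,i')$ or $t = \min(j,j')$ or at the edge $s = 1$, but none of these introduce extra relations, so the argument should go through cleanly.
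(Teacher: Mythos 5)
Your proof is correct, and the forward direction uses a genuinely different technique from the paper's. The paper argues abstractly: it observes from \eqref{ee111} that all composition multiplicities of $M_{i,j}$ (as a subquotient of ${}_AA_A$) are at most one, shows that a nonzero map forces $i\le j'$ (else no common composition factors) and $j\ge j'$ (else the socle of $M_{i',j'}$ is missed), notes that any map then factors through $M_{i,j'}$, and finally invokes \cite[Lemma~7]{Zh} to conclude. Your approach instead computes entirely from the explicit path basis: the Peirce decomposition $\varepsilon_t M_{i',j'}\varepsilon_s$ pins each $\phi(p_{s,t})$ to a scalar multiple of $p_{s,t}$; compatibility with left-multiplication by $\alpha_t$ and right-multiplication by $\alpha_{s-1}$ propagates a single scalar $c$ across the common support $\{(s,t): \max(i,i')\le t\le\min(j,j'),\ s\le t\}$; and annihilation relations at $t=i'-1$ (when $i'>i$) and $t=j$ (when $j'>j$) force $c=0$. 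This is more elementary and fully self-contained — it avoids the appeal to the external lemma from \cite{Zh} — at the cost of the bookkeeping you flag, which I have checked does go through: right-multiplication brings any $(s,t)$ to $(1,t)$, left-multiplication then adjusts $t$ within $[\max(i,i'),\min(j,j')]$, and the two killing arguments land on indices $(s,i')$ and $(s,j+1)$ that are indeed available once the overlap is nonempty. The existence half (the induced map $J_i\hookrightarrow J_{i'}\twoheadrightarrow J_{i'}/J_{j'+1}$ descending to $M_{i,j}$ because $J_{j+1}\subseteq J_{j'+1}$) coincides with the construction the paper gives in the paragraph immediately after the lemma. One cosmetic remark: the symbol $p_{s,t}$ you introduce for the basis path collides with the paper's later use of $p_{ij}$ for bimodule homomorphisms in Section~\ref{s5}; a different letter would avoid confusion if this were to be incorporated.
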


\begin{proof}
By~\eqref{ee111}, we see that all composition multiplicities of $A$, viewed
as an $A$-$A$-bimodule, are at most $1$. The same holds for each $M_{i,j}$.
If $\mathrm{Hom}_{A\text{-}A}(M_{i,j},M_{i',j'})\neq 0$, then $i\leq j'$ since,
otherwise, $M_{i,j}$ and $M_{i',j'}$ would not have any composition subquotients
in common. If $j<j'$, then $\mathrm{Hom}_{A\text{-}A}(M_{i,j},M_{i',j'})=0$
since in this case $M_{i,j}$ does not contain a composition subquotient
isomorphic to the simple socle of $M_{i',j'}$. Finally, if
$i\leq j'\leq j$, then any map from $M_{i,j}$ to $M_{i',j'}$ factors through
$M_{i,j'}$ as all remaining simple subquotients of $M_{i,j}$ are automatically
in the kernel. Thus the assertion of this lemma  follows directly from~\cite[Lemma~7]{Zh}.
\end{proof}

For any $i'\leq i\leq j'\leq j$, the obvious inclusion of $J_i$ into $J_{i'}$
induces a nonzero map from $M_{i,j}$ to $M_{i',j'}$ which we denoted by
$\varsigma_{(M_{i,j},\,M_{i',j'})}$. By Lemma~\ref{lx1} ,
the map $\varsigma_{(M_{i,j},\,M_{i',j'})}$
forms a basis of the corresponding homomorphism space.

Now we can determine the quiver $\mathcal{Q}^{\mathcal{CI}^{(1)}}$ for the
underlying algebra of the principal $2$-representations $\mathbf{P}_{\mathtt{i}}$
of $\mathscr{D}_{\mathcal{CI}^{(1)}(A)}$. The vertices of
$\mathcal{Q}^{\mathcal{CI}^{(1)}}$ are given by all indecomposable
$A$-$A$-bimodules $M_{i,j}$, where $1\leq i\leq j\leq n$. There is exactly one
arrow from each $M_{i,j}$ to $M_{i,j+1}$ corresponding to
$\varsigma_{(M_{i,j+1},\,M_{i,j})}$ and one arrow from $M_{i,j}$ to $M_{i+1,j}$
corresponding to $\varsigma_{(M_{i+1,j},\,M_{i,j})}$. The relations satisfied
by these maps are the obvious commutativity relations and zero relations,
when applicable, as indicated by the dashed arrows on the following picture:
\begin{equation}\label{ii1}
\xymatrix{\bullet\ar[d]\ar@{--}@/^/[dr]&&&&\\
\bullet\ar[d]\ar[r]\ar@{--}@/^/[dr]&\bullet\ar[d]\ar@{--}@/^/[dr]&&&&\\
\bullet\ar[d]\ar[r]\ar@{--}@/^/[dr]&\bullet\ar[d]\ar[r]\ar@{--}@/^/[dr]&
\bullet\ar[d]\ar@{--}@/^/[dr]&&&\\
\vdots\ar[d]\ar[r]\ar@{--}@/^/[dr]&\vdots\ar[d]\ar[r]\ar@{--}@/^/[dr]&
\vdots\ar[d]\ar[r]\ar@{--}@/^/[dr]&\ddots\ar[d]\ar@{--}@/^/[dr]&&\\
\bullet\ar[d]\ar[r]\ar@{--}@/^/[dr]&\bullet\ar[d]\ar[r]\ar@{--}@/^/[dr]&\cdots\ar[d]\ar[r]\ar@{--}@/^/[dr]&
\bullet\ar[r]\ar[d]\ar@{--}@/^/[dr]&\bullet\ar[d]\ar@{--}@/^/[dr]&\\
\bullet\ar[r]&\bullet\ar[r]&\cdots\ar[r]&\bullet\ar[r]&\bullet\ar[r]
&\bullet
}
\end{equation}
Here both the last row and the first column have $n$ bullets and the bullet in the position
$(i,j)$ stands for the $A$-$A$-bimodule $M_{j,i}$, where $1\leq j\leq i\leq n$.
All squares commute and all top diagonal compositions are zero.

\begin{remark}
{\rm
\begin{enumerate}[$($i$)$]
\item We observe that the quiver~\eqref{ii1} is exactly the Auslander-Reiten quiver
for the original algebra $A$. Thus our construction makes the module category of
$A$ into a tensor category whose tensor structure corresponds to
that of $\mathscr{D}_{\mathcal{CI}^{(1)}(A)}(\mathtt{i},\mathtt{i})$.
 A similar tensor structure appeared from a completely
different problem considered in \cite{He}.
\item From \eqref{ii1} we have a nice  combinatorial rule for composition of
indecomposable $1$-morphisms in our $2$-category: taking two vertices in the
quiver~\eqref{ii1}, by Lemma~\ref{lem13new}, the composition of the corresponding
indecomposable $1$-morphisms (which does not depend on the order in which we compose)
is the indecomposable $1$-morphism corresponding to the
the intersection of the horizontal line going through the higher of the two vertices
and the vertical line going through the rightmost of the two vertices, if this intersection
is inside our quiver. If the intersection happens to be outside of our quiver, then the
composition is zero.
\end{enumerate}
}
\end{remark}

\subsection{Simple transitive $2$-representations for $\mathscr{D}_{\mathcal{CI}^{(1)}(A)}$}

For each indecomposable $1$-morphism $G$, denote by $\mathcal{L}_{G}$ the corresponding
left cell (consisting of the isomorphism class of $G$). By definition, we have
\begin{multline*}
\mathbf{N}_G(\mathtt{i})=\mathrm{add}(\{F:
F\text{ runs through all $1$-morphisms corresponding to vertices in the }\\
\text{upper-right area of the vertex to which }
G \text{ corresponds in the quiver~\eqref{ii1}}\})
\end{multline*}
and the ideal $\mathbf{I}_G$ in $\mathbf{N}_G$ is generated by all $2$-morphisms
$\mathrm{id}_F$, where $F\in\mathbf{N}_G(\mathtt{i})$ and $F\not\cong G$.
By Lemma~\ref{lx1}, we have $\mathrm{End}(G)\cong \Bbbk \mathrm{id}_{G}$ in the quotient category $\mathbf{N}_G/\mathbf{I}_G(\mathtt{i})=\mathbf{C}_{\mathcal{L}_G}(\mathtt{i})$ and thus we obtain $\mathbf{C}_{\mathcal{L}_G}(\mathtt{i})\cong \Bbbk$-mod.

For any indecomposable $1$-morphism $F$, define $\mathcal{ST}_F$ to be the set
consisting of all indecomposable $1$-morphisms $H$, up to isomorphism,
such that $H\circ F\cong F$.

\begin{proposition}
For any two nonisomorphic indecomposable $1$-morphisms $F$ and $G$,
the cell $2$-representations $\mathbf{C}_{\mathcal{L}_F}$ and
$\mathbf{C}_{\mathcal{L}_G}$ are not equivalent.
\end{proposition}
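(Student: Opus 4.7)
The plan is to extract from each cell $2$-representation $\mathbf{C}_{\mathcal{L}_F}$ an equivalence invariant, namely the set $\mathcal{ST}_F$ itself, and then to show that this invariant determines $F$ up to isomorphism.

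First I would unpack the structure of $\mathbf{C}_{\mathcal{L}_F}$ for $F=F_{i,j}$. By Corollary~\ref{cr14}, the left cell $\mathcal{L}_F$ consists of the single isomorphism class $\{F\}$, so $\mathbf{N}_F(\mathtt{i})$ is the additive closure of those indecomposable $1$-morphisms $H$ with $H\geq_L F$, and the ideal $\mathbf{I}_F$ kills $\mathrm{id}_H$ for every such $H\not\cong F$. As already observed just before the proposition, $\mathbf{C}_{\mathcal{L}_F}(\mathtt{i})\simeq\Bbbk\text{-mod}$; let $X_F$ denote the indecomposable object corresponding to $F$. By Lemma~\ref{lem13new}, for any indecomposable $1$-morphism $H$ the composition $H\circ F$ is either zero or indecomposable, hence its image in the quotient is $X_F$ precisely when $H\circ F\cong F$ and zero otherwise. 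Thus the multiplicity matrix $[H]$ in $\mathbf{C}_{\mathcal{L}_F}$ (a $1\times 1$ matrix) equals $1$ if $H\in\mathcal{ST}_F$ and $0$ otherwise.

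Next, I would note that the collection $\{[H]\}_H$, indexed by isomorphism classes of indecomposable $1$-morphisms, is preserved under any equivalence of $2$-representations. Indeed, a $2$-natural equivalence $\Phi:\mathbf{C}_{\mathcal{L}_F}\to\mathbf{C}_{\mathcal{L}_G}$ gives an equivalence $\Phi_{\mathtt{i}}$ of categories which carries $X_F$ to an object isomorphic to $X_G$ and intertwines, up to natural isomorphism, the action of each $H$. In particular, $HX_F\neq 0$ if and only if $HX_G\neq 0$, which translates to the set-theoretic equality $\mathcal{ST}_F=\mathcal{ST}_G$.

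Finally, it remains to recover the pair $(i,j)$ from the set $\mathcal{ST}_{F_{i,j}}$. Using Remark~\ref{rr1}\eqref{rr1c}, one obtains the explicit description
\begin{displaymath}
\mathcal{ST}_{F_{i,j}}=\{F_{a,b}\,:\,1\leq a\leq i\text{ and }j\leq b\leq n\},
\end{displaymath}
from which $i=\max\{a\,:\,F_{a,b}\in\mathcal{ST}_{F_{i,j}}\text{ for some }b\}$ and $j=\min\{b\,:\,F_{a,b}\in\mathcal{ST}_{F_{i,j}}\text{ for some }a\}$. Consequently $\mathcal{ST}_F=\mathcal{ST}_G$ forces $F\cong G$, and the contrapositive yields the proposition. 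The argument is essentially bookkeeping; the only point demanding care is to verify that the multiplicity data genuinely is an equivalence invariant, which is immediate from the definition of equivalence of $2$-representations together with the Krull--Schmidt property in $\mathbf{C}_{\mathcal{L}_F}(\mathtt{i})$.
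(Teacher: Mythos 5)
Your proof is correct and takes essentially the same approach as the paper's: the paper's one-line argument also appeals to Remark~\ref{rr1}\eqref{rr1c} to produce an indecomposable $1$-morphism lying in one of $\mathcal{ST}_F$, $\mathcal{ST}_G$ but not the other, implicitly using that these sets are invariants of the cell $2$-representations. You spell out the details that the paper leaves to the reader — the explicit description $\mathcal{ST}_{F_{i,j}}=\{F_{a,b}\,:\,a\leq i,\ j\leq b\}$, the verification that a $2$-natural equivalence preserves this set, and the recovery of $(i,j)$ — and in doing so you also avoid the slight asymmetry in the paper's phrasing (it claims $H\in\mathcal{ST}_F\setminus\mathcal{ST}_G$, whereas when $[i',j']\subsetneq[i,j]$ the distinguishing $H$ lies on the other side; your $\mathcal{ST}_F=\mathcal{ST}_G\Rightarrow F\cong G$ formulation handles both directions cleanly).
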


\begin{proof}
It follows from Remark~\ref{rr1}~\eqref{rr1c} that, under our assumptions,
there is an  indecomposable $1$-morphism $H$ such that $H\in \mathcal{ST}_{F}$
and $H\not\in \mathcal{ST}_{G}$. The claim of the proposition follows.
\end{proof}

\begin{theorem}
Every simple transitive $2$-representation of $\mathscr{D}_{\mathcal{CI}^{(1)}(A)}$ is equivalent to $\mathbf{C}_{\mathcal{L}_G}$ for some indecomposable $1$-morphism $G$.
\end{theorem}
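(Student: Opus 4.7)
The plan is to follow the strategy of Theorem~\ref{th5}, replacing the ideal arithmetic there with the explicit combinatorics of Lemma~\ref{lem13new} and Remark~\ref{rr1}. Let $\mathbf{M}$ be a simple transitive $2$-representation of $\mathscr{D}_{\mathcal{CI}^{(1)}(A)}$ and set $\Sigma:=\{F_{i,j}\,:\,\mathbf{M}(F_{i,j})\neq 0\}$; this is non-empty because $\mathbbm{1}_{\mathtt{i}}=F_{1,n}\in\Sigma$. I would first pick $F_{i,j}\in\Sigma$ whose interval $[i,j]$ is minimal, with respect to inclusion, among intervals of elements of $\Sigma$. By Remark~\ref{rr1}~\eqref{rr1c}, this is precisely minimality for the left-cell preorder restricted to $\Sigma$.

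By Lemma~\ref{lem13new}, for any indecomposable $F_{k,l}$ the composition $F_{k,l}\circ F_{i,j}$ is either zero or equal to $F_{\max(k,i),\min(l,j)}$, and the resulting interval is automatically contained in $[i,j]$. By minimality of $[i,j]$ in $\Sigma$, whenever this composition has interval strictly smaller than $[i,j]$, the corresponding $1$-morphism lies outside $\Sigma$ and so acts as zero. Choosing any indecomposable $X\in\mathbf{M}(\mathtt{i})$ with $\mathbf{M}(F_{i,j})X\neq 0$, I then obtain $\mathbf{M}(\mathtt{i})=\mathrm{add}(\mathbf{M}(F_{i,j})X)$ from transitivity. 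To strengthen this to strict positivity of every entry of $[F_{i,j}]$, I would invoke the commutativity $F_{i,j}\circ G\cong G\circ F_{i,j}$ of indecomposable $1$-morphisms from Remark~\ref{rr1}~\eqref{rr1b}: if $\mathbf{M}(F_{i,j})X_s=0$ for some indecomposable $X_s$, then $\mathbf{M}(F_{i,j})\mathbf{M}(G)X_s\cong \mathbf{M}(G)\mathbf{M}(F_{i,j})X_s=0$ for every $G$, so $\mathbf{M}(F_{i,j})$ annihilates $\mathbf{G}_\mathbf{M}(\{X_s\})=\mathbf{M}(\mathtt{i})$, contradicting $F_{i,j}\in\Sigma$. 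Rerunning the absorption argument with $X_s$ in place of $X$ then shows each $X_s$ generates $\mathbf{M}(\mathtt{i})$ as $\mathrm{add}(\mathbf{M}(F_{i,j})X_s)$, which is exactly the desired positivity.

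Since $F_{i,j}\circ F_{i,j}\cong F_{i,j}$ by Remark~\ref{rr1}~\eqref{rr1a}, the matrix $[F_{i,j}]$ is a positive idempotent, so \cite[Proposition~6]{MM4} forces $[F_{i,j}]=(1)$, and hence $\mathbf{M}(\mathtt{i})$ has a unique indecomposable $X$ up to isomorphism with $\mathbf{M}(F_{i,j})X\cong X$. The standard radical argument (the radical of the local algebra $\mathrm{End}(X)$ generates a proper $\mathscr{D}_{\mathcal{CI}^{(1)}(A)}$-stable ideal in $\mathbf{M}$) combined with simple transitivity forces $\mathrm{End}(X)\cong\Bbbk$ and $\mathbf{M}(\mathtt{i})\simeq\Bbbk\text{-mod}$. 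To conclude, the unique $2$-natural transformation $\Psi:\mathbf{P}_{\mathtt{i}}\to\mathbf{M}$ sending $\mathbbm{1}_{\mathtt{i}}$ to $X$ satisfies $\Psi(F_{i',j'})=\mathbf{M}(F_{i',j'})X=0$ for every $F_{i',j'}$ with $[i',j']\subsetneq[i,j]$ (such $F_{i',j'}>_L F_{i,j}$ lie outside $\Sigma$ by minimality), so $\Psi$ restricts to $\mathbf{N}_{F_{i,j}}$, kills the defining ideal $\mathbf{I}_{F_{i,j}}$, and descends to an equivalence $\mathbf{C}_{\mathcal{L}_{F_{i,j}}}\simeq\mathbf{M}$. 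The main subtle point is the non-annihilation step in the second paragraph: without the bilateral absorption coming from commutativity of indecomposable $1$-morphisms (a feature special to this $2$-category), the reduction to a single indecomposable object would require a genuinely different argument.
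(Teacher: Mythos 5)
Your argument is correct and follows the paper's proof essentially step for step; the only real divergence is in the non-annihilation (positivity) step, where you invoke the commutativity $F_{i,j}\circ G\cong G\circ F_{i,j}$ from Remark~\ref{rr1}~\eqref{rr1b}, whereas the paper first shows that $\Sigma$ has a unique $\geq_L$-maximal element $G$ and then uses the absorption $G\circ F\cong G$ for $F\in\Sigma$ (a consequence of Remark~\ref{rr1}~\eqref{rr1c}). Beware also of a terminological slip: an $F_{i,j}\in\Sigma$ whose interval is minimal under inclusion is \emph{maximal}, not minimal, with respect to $\geq_L$ (since $F_{i',j'}\geq_L F_{i,j}$ precisely when $[i',j']\subseteq[i,j]$), although your argument correctly reasons via the interval picture throughout.
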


\begin{proof}
Let $\mathbf{M}$ be a simple transitive $2$-representation of $\mathscr{D}_{\mathcal{CI}^{(1)}(A)}$. Set
\begin{displaymath}
\Sigma:=\{F\in \mathcal{S}_{\mathscr{D}_{\mathcal{CI}^{(1)}(A)}}|\,\mathbf{M}(F)\neq 0\}.
\end{displaymath}
Since $\mathbf{M}(\mathbbm{1}_{\mathtt{i}})=\mathrm{id}_{\mathbf{M}(\mathtt{i})}\neq 0$,
we see that the set $\Sigma$ is non-empty. Let $G$ be a maximal element in $\Sigma$
with respect to $\geq_{L}$. Then the additive closure of $GX$,
where $X$ runs through all objects in $\mathbf{M}(\mathtt{i})$, is non-zero
since $G\in \Sigma$, and is closed under the action of
$\mathscr{D}_{\mathcal{CI}^{(1)}(A)}$ by maximality of $G$.
Transitivity of $\mathbf{M}$ hence implies that this additive
closure must coincide with the whole of $\mathbf{M}(\mathtt{i})$.
For any indecomposable $1$-morphism $F$,
from the maximality of $G$ it follows that $F$ acts as zero on
$\mathbf{M}(\mathtt{i})$ if and only if $F\circ G\not \cong G$.
In particular, if $F\circ G\not \cong G$, then none of the
direct summands of $F\circ G$ lies in $\Sigma$.

Assume that $X_1, X_2, \dots, X_n$ is a complete and irredundant
list of representatives of isomorphism classes of indecomposable
objects in $\mathbf{M}(\mathtt{i})$. Since $G\in \Sigma$, there
exists some $j$ such that $GX_j\neq 0$. Note that $0\neq\mathrm{add}(GX_j)$
is $\mathscr{D}_{\mathcal{CI}^{(1)}(A)}$-invariant. Due to transitivity
of $\mathbf{M}$, we obtain $\mathrm{add}(GX_j)=\mathbf{M}(\mathtt{i})$.

Now we claim that the set $\Sigma$ has a unique maximal element $G$
with respect to $\geq_{L}$. Indeed, if $H$ would be another such
maximal element, then maximality of both $G$ and $H$ would imply
$H\circ G\cong G\circ H\not\cong G$ which, by the above, would mean that
$H\not\in \Sigma$, a contradiction. Therefore, for any $H\in \Sigma$,
we have $H\leq_L G$ and hence $H\circ G\cong G\cong G\circ H$.

Next we claim that $GX_i\neq 0$ for all $i$. Indeed, assume $GX_{i}=0$
for some $i$. Then, for any $F\in \Sigma$, we have $0=GX_{i}=G\circ FX_{i}$
(for the second equality we use $G\circ F=G$ for $F\in \Sigma$ which was
established in the previous paragraph).
This means that $\mathbf{G}_\mathbf{M}(\{X_{i}\})$ is annihilated by $G$
and hence cannot coincide with $\mathbf{M}(\mathtt{i})$ since $G\in \Sigma$.
This, however, contradicts transitivity of $\mathbf{M}$. Therefore $GX_i\neq 0$ for all $i$,
moreover, $\mathrm{add}(GX_i)$ is $\mathscr{D}_{\mathcal{CI}^{(1)}(A)}$-invariant for each $i$,
and thus must coincide with $\mathbf{M}(\mathtt{i})$ due to transitivity of $\mathbf{M}$.
Consequently, all entries in the matrix $[G]$ are positive.

From Remark~\ref{rr1}~\eqref{rr1a}, we see that $G$ is an idempotent.
Following the proof of Theorem~\ref{th5}, we also get $[G]=(1)$ and thus
$\mathbf{M}(\mathtt{i})$ has only one indecomposable object up to isomorphism,
denoted by $X$. For any indecomposable $1$-morphism $F$, we have $F\circ GX\cong FX$.
Therefore, if $F\in\Sigma$, then $FX\cong X$  since $F\circ G\cong G$.
If $F\not \in\Sigma$, then $FX\cong 0$.
This implies that each $F$ induces an endomorphism of the algebra
$B:=\mathrm{End}(X)$. Since $X$ is indecomposable, the algebra $B$ is local and
its radical consists of all nilpotents in $B$. Note that the radical must be
preserved by all $F$ and thus it generates a
$\mathscr{D}_{\mathcal{CI}^{(1)}(A)}$-invariant ideal of $\mathbf{M}(\mathtt{i})$,
which does not contain any identity morphisms apart from the one for the zero object.
By the simple transitivity of $\mathbf{M}$, we have $\mathrm{Rad}\, B=0$.
This means that $B\cong \Bbbk$ and $\mathbf{M}(\mathtt{i})$ is equivalent to $\Bbbk$-mod.

Consider the unique $2$-natural transformation
$\Psi:\mathbf{P}_{\mathtt{i}}(\mathtt{i})\to \mathbf{M}(\mathtt{i})$
which sends $\mathbbm{1}_{\mathtt{i}}$ to $X$. Then $\Psi$ sends $G$ to $GX\cong X$
and all indecomposable $1$-morphisms $F$ satisfying $F>_{\mathcal{L}}G$ to zero since
$F\circ G\not\cong G$. Therefore the restriction of $\Psi$ to
$\mathbf{N}_G(\mathtt{i})$ gives a $2$-natural transformation from $\mathbf{N}_G$ to $\mathbf{M}$
which annihilates the ideal $\mathbf{I}_G$ in $\mathbf{N}_G$. Thus it induces a $2$-natural transformation
from $\mathbf{C}_{\mathcal{L}_G}$ to  $\mathbf{M}$ and the latter is an equivalence by construction.
This completes the proof.
\end{proof}

\subsection{The Drinfeld center of $\mathscr{D}_{\mathcal{CI}^{(1)}(A)}$}

It is easily checked by definition that for any indecomposable $1$-morphism
$F$ the pair $(F,\epsilon_F)$ is an object in the Drinfeld center of the $2$-category $\mathscr{D}_{\mathcal{CI}^{(1)}(A)}$. Before stating the main result of this subsection,
we start with some preparations. By Lemma~\ref{lx1}, we see that
\begin{displaymath}
\dim \mathrm{Hom}_{A\text{-}A}(M_{i,j},M_{i',j'})=1\Longleftrightarrow 1\leq i'\leq
i\leq j'\leq j\leq n.
\end{displaymath}
Assume that this homomorphism space is nonzero and
\begin{displaymath}
\dim M_{i,j'}=r\geq 0,\quad \dim M_{i,j}=s\geq r\quad\text{and}\quad \dim M_{i',j'}=t\geq r.
\end{displaymath}
Each $M_{s,t}$ has a natural basis consisting of paths. Let $\varsigma_{(M_{i,j},\,M_{i',j'})}$
be the non-zero element in $\mathrm{Hom}_{A\text{-}A}(M_{i,j},M_{i',j'})$
which has the following matrix with respect to these bases:
\begin{displaymath}
\left(
\begin{array}{c|c}
1_r &0 \\ \hline
0 &0
\end{array}\right)_{s\times t}.
\end{displaymath}

Now we give a full description of the Drinfeld center $\mathcal{Z}(\mathscr{D}_{\mathcal{CI}^{(1)}(A)})$.

\begin{theorem}\label{thm1}
Objects of the category $\mathcal{Z}(\mathscr{D}_{\mathcal{CI}^{(1)}(A)})$
are finite direct sums of copies of $(F,\epsilon^F)$, up to isomorphism,
where $F$ runs through all indecomposable $1$-morphisms
and each $\epsilon^F$ is defined as in Remark~\ref{rr1}~\eqref{rr1b}.

Moreover, we have
$\mathrm{End}_{\mathcal{Z}(\mathscr{D}_{\mathcal{CI}^{(1)}(A)})}((F,\epsilon^F))
=\Bbbk\mathrm{id}_{F}$ and
\begin{equation}\label{m1}
\mathrm{Hom}_{\mathcal{Z}(\mathscr{D}_{\mathcal{CI}^{(1)}(A)})}((F,\epsilon^F),(F',\epsilon^{F'}))
=\mathrm{Hom}_{\mathscr{D}_{\mathcal{CI}^{(1)}(A)}}(F,F'),
\end{equation}
for any two pairs $(F,\epsilon^F)$ and  $(F',\epsilon^{F'})$,
where $F$ and $F'$ are not isomorphic to each other.
The category $\mathcal{Z}(\mathscr{D}_{\mathcal{CI}^{(1)}(A)})$ is
biequivalent to the category $\mathscr{D}_{\mathcal{CI}^{(1)}(A)}(\mathtt{i},\mathtt{i})$.
\end{theorem}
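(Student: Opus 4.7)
The plan is to mimic the strategy used in Theorem~\ref{thm}, ultimately establishing that the forgetful functor $(F,\Theta)\mapsto F$ is an equivalence between $\mathcal{Z}(\mathscr{D}_{\mathcal{CI}^{(1)}(A)})$ and the tensor category $\mathscr{D}_{\mathcal{CI}^{(1)}(A)}(\mathtt{i},\mathtt{i})$. First, I reduce to the case that $F$ is indecomposable via Proposition~\ref{prop}. Given $(F,\Theta)$ with $F=\bigoplus_{k}F_{i_k,j_k}$, to verify condition~\eqref{eq1} I imitate the diagram chase that follows~\eqref{eqn1} in the proof of Theorem~\ref{thm}: I apply naturality of $\Theta$ against suitable canonical $\varsigma$-maps between the indecomposable summands (existing by Lemma~\ref{lx1}) and exploit $\pi_{F_l}\circ_1\iota_{F_k}=\delta_{kl}\mathrm{id}$, together with the fact that the relevant maps remain injective after tensoring on the right (using that $A$ is hereditary), to force the off-diagonal components of $\Theta$ to vanish.

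Next, for an indecomposable $F=F_{i,j}$, I classify the possible~$\Theta$. By Lemma~\ref{lem13new} and Lemma~\ref{lx1}, for each $F_{c,d}$ the morphism $\Theta(F_{c,d})\colon F_{i,j}\circ F_{c,d}\to F_{c,d}\circ F_{i,j}$ lives in a hom space of dimension at most one, hence equals $\lambda_{c,d}\,\epsilon^{F_{i,j}}(F_{c,d})$ for some scalar $\lambda_{c,d}$ (vacuous when the target is zero). The identity $\Theta(\mathbbm{1}_{\mathtt{i}})=\mathrm{id}$, enforced by Remark~\ref{re1}~\eqref{re12}, gives $\lambda_{1,n}=1$. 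Applying naturality of $\Theta$ to each basis $\varsigma$-arrow in the quiver~\eqref{ii1}, the one-dimensionality of the target hom spaces forces $\lambda_{c,d}=\lambda_{c',d'}$ whenever $F_{c,d}$ and $F_{c',d'}$ are joined by such an arrow. Since the underlying undirected graph of~\eqref{ii1} is connected, all $\lambda_{c,d}$ equal $1$, and $\Theta=\epsilon^{F_{i,j}}$.

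Finally, for hom spaces: if $F\not\cong F'$ are indecomposable, then $\dim\mathrm{Hom}_{\mathscr{D}_{\mathcal{CI}^{(1)}(A)}}(F,F')\le 1$ by Lemma~\ref{lx1}, and when nonzero the generator $\varsigma_{(F,F')}$ satisfies condition~\eqref{2c} by a naturality and one-dimensionality argument analogous to Step~2, yielding~\eqref{m1}. For endomorphisms, $\mathrm{End}_{\mathcal{Z}(\mathscr{D}_{\mathcal{CI}^{(1)}(A)})}((F,\epsilon^F))\subset\mathrm{End}_{\mathscr{D}_{\mathcal{CI}^{(1)}(A)}}(F)=\Bbbk\mathrm{id}_F$, and $\mathrm{id}_F$ trivially satisfies~\eqref{2c}. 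Combining this with Step~1 yields the asserted biequivalence. The main obstacle I anticipate is Step~2: one must verify that the canonical isomorphisms $\epsilon^{F_{i,j}}$ coming from Remark~\ref{rr1}~\eqref{rr1b} and Lemma~\ref{ll9} are coherent under the hexagon condition~\eqref{1c}, so that the scalar-propagation argument along the $\varsigma$-graph proceeds without hidden scalar corrections inherited from the choices in Lemma~\ref{ll9}.
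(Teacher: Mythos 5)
Your overall strategy mirrors the paper's, but there are two places where the description as written would not go through as cleanly as you suggest, and one place where you genuinely deviate in a workable but different way.

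\textbf{Step 1 (decomposition).} Imitating the diagram chase of Theorem~\ref{thm} verbatim cannot work here, because in $\mathscr{D}_A$ every indecomposable $1$-morphism $\mathrm{Dp}_J$ admits an inclusion $\mathrm{Dp}_J\hookrightarrow\mathbbm{1}_{\mathtt{i}}$, whereas in $\mathscr{D}_{\mathcal{CI}^{(1)}(A)}$ only the $F_{p,n}$ (corresponding to $J_p\subset A$) do; the remaining $F_{p,q}$ with $q<n$ are genuine subquotients of $\mathbbm{1}_{\mathtt{i}}$ and do not embed. The paper therefore runs a two-stage argument for condition~\eqref{eq2}: first against the inclusions $\iota_{(J_p,A)}:F_{p,n}\hookrightarrow\mathbbm{1}_{\mathtt{i}}$, using that $\mathrm{id}_{F_k}\circ_0\iota_{(J_p,A)}$ is injective because each $M_{p,q}$ is right projective; and then against the surjections $\varsigma_{(M_{p,n},M_{p,q})}:F_{p,n}\tto F_{p,q}$, using that $\varsigma_{(M_{p,n},M_{p,q})}\circ_0\mathrm{id}_{F_j}$ is surjective by right exactness of the tensor product. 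Your description mentions only the injectivity part; without the surjectivity step you only control $\Theta(F_{p,n})$, not $\Theta(F_{p,q})$ for $q<n$. Also, you speak of $\varsigma$-maps ``between the indecomposable summands'' of $F$, but the naturality is applied against $2$-morphisms between the test $1$-morphisms $K$, not between the summands of $F$; this wording suggests a confusion worth clearing up.

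\textbf{Step 2 (indecomposable $F$).} Here you do take a genuinely different route. The paper again uses the two composite maps $\iota_{(J_p,A)}$ and $\varsigma_{(M_{p,n},M_{p,q})}$, comparing $\Theta(F_{p,q})$ directly against $\Theta(\mathbbm{1}_{\mathtt{i}})=\mathrm{id}_F$ via explicit matrix identities. You propose instead to propagate a scalar $\lambda_{c,d}$ along single $\varsigma$-arrows of the quiver~\eqref{ii1}. This can be made to work, but you must (a)~check that the subset $\{(c,d):F_{i,j}\circ F_{c,d}\neq 0\}$, which by Lemma~\ref{lem13new} is the ``staircase'' region $\{c\leq j,\ d\geq i\}$, is connected in the underlying graph of~\eqref{ii1} and contains $(1,n)$, and (b)~check that for each edge of the quiver within this region the composition $\epsilon^{F}(H)\circ_1(\mathrm{id}_F\circ_0\alpha)$ is nonzero, which again requires both the injectivity argument (right projectivity) for the inclusion arrows and the surjectivity argument (right exactness) for the quotient arrows. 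The paper's choice of the two long composites sidesteps the connectivity question and gives the normalisation $\lambda=1$ in one step from $\Theta(\mathbbm{1}_{\mathtt{i}})=\mathrm{id}_F$.

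\textbf{Coherence of $\epsilon^F$.} Your final caveat is well placed and applies to both proofs: both implicitly rely on the family $\epsilon^F(G)$ of Remark~\ref{rr1}\eqref{rr1b} being a natural transformation satisfying the hexagon~\eqref{1c}. The paper asserts this with ``it is easily checked''; your scalar-propagation argument needs at least the naturality square of $\epsilon^F$ (so that the proportionality constant between the two paths around the square is $1$). This is a point that deserves explicit verification in either approach, but it is not a defect specific to yours.

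The hom-space and biequivalence parts (Step 3) are fine and match the paper.
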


\begin{proof}
Let $(F,\Theta)$ be an object in $\mathcal{Z}(\mathscr{D}_{\mathcal{CI}^{(1)}(A)})$.  
Assume that
\begin{displaymath}
F:=\bigoplus_{i=1}^mF_i,
\end{displaymath}
where each $F_i$ is an indecomposable $1$-morphism.
We would like to use Proposition~\ref{prop} to prove that $(F, \Theta)$ decomposes into a direct sum of
certain $\{(F_i,\Phi^{(i)}\}_{i=1}^n$ in 
$\mathcal{Z}(\mathscr{D}_{\mathcal{CI}^{(1)}(A)})$,
where each natural isomorphism $\Phi^{(i)}$ is given by:
\begin{displaymath}
\Phi^{(i)}(K):=(\mathrm{id}_K \circ_0 \pi_{F_i})
\circ_1\Theta(K)\circ_1(\iota_{F_i}\circ_0 \mathrm{id}_{K}), \quad K\in\mathscr{D}_{\mathcal{CI}^{(1)}(A)}(\mathtt{i},\mathtt{i}).
\end{displaymath}
It suffices to show that
condition~\eqref{eq2} holds, for any $j,k\in\{1,2,\dots,m\}$ and any $1$-morphism
$K\in\mathscr{D}_{\mathcal{CI}^{(1)}(A)}(\mathtt{i},\mathtt{i})$. 
This reads as follows:
for any $j\neq k$ and any $1$-morphism $K$, we have
\begin{equation}\label{eqn2}
(\pi_{F_k}\circ_0 \mathrm{id}_{K})
\circ_1(\Theta(K))^{-1}\circ_1(\mathrm{id}_K\circ_0\iota_{F_j})=0.
\end{equation}

Similarly, applying the commutative diagram~\eqref{eq3} to each $2$-morphism
$F_{p,n}\hookrightarrow \mathbbm{1}_{\mathtt{i}}$, where $1\leq p\leq n$,
induced by $\iota_{(J_p, A)}$,
we have the following commutative diagram:
\begin{displaymath}
\hspace{-.3cm}
\xymatrix@R=3.3pc@C=2.3pc{F_{p,n} \circ F_j\ar[rr]^{\mathrm{id}_{F_{p,n}}\circ_0 \iota_{F_j}\quad}
\ar[d]_{\iota_{(J_p, A)}\circ_0\mathrm{id}_{F_j}}
&&F_{p,n} \circ(\displaystyle\bigoplus_{i=1}^mF_i)
\ar[rr]^{(\Theta(F_{p,n}))^{-1}}\ar[d]_{\iota_{(J_p, A)}\circ_0\mathrm{id}_{F}}
&& (\displaystyle\bigoplus_{i=1}^mF_i)\circ F_{p,n}
\ar[d]_{\mathrm{id}_{F}\circ_0\iota_{(J_p, A)}}\ar[rr]^{\quad \pi_{F_k}\circ_0\mathrm{id}_{K}}
&&  F_k\circ F_{p,n}
\ar[d]_{\mathrm{id}_{F_k}\circ_0\iota_{(J_p, A)}}\\
F_j\ar[rr]^{\iota_{F_j}\quad}&&\displaystyle\bigoplus_{i=1}^mF_i
\ar[rr]^{(\Theta(\mathbbm{1}_{\mathtt{i}}))^{-1}}&&
\displaystyle\bigoplus_{i=1}^mF_i\ar[rr]^{\quad\pi_{F_k}}&& F_k.}
\end{displaymath}
Note that $(\Theta(\mathbbm{1}_{\mathtt{i}}))^{-1}=\mathrm{id}_F$.
Each $M_{p,q}$, where $1\leq p\leq q \leq n$ is right projective and hence 
the functor $\mathrm{id}_{F_{p,q}}\circ_0{}_-$ is exact. Therefore
$\mathrm{id}_{F_k}\circ_0\iota_{(J_p, A)}$ is injective.
Since $\pi_{F_k}\circ_1 \iota_{F_i}=\delta_{jk}\mathrm{id}_{F_j}$,
using the commutativity of the above diagram, we obtain that equation~\eqref{eqn2}, for each $F_{p,n}$,
follows from the injectivity of each $\mathrm{id}_{F_k}\circ_0\iota_{(J_p, A)}$.

Then, applying the commutative diagram~\eqref{eq3} to 
each $2$-morphism $F_{p,n}\tto F_{p,q}$, where $ 1\leq p\leq q\leq n$,
induced by the canonical map $\varsigma_{(M_{p,n}, M_{p,q})}: J_p\tto J_p/J_{q+1}$,
we have the following commutative diagram:
\begin{displaymath}
\hspace*{-1cm}
\xymatrix@R=3.3pc@C=2.3pc{ F_{p,n} \circ F_j\ar[rr]^{\mathrm{id}_{F_{p,n}}\circ_0 \iota_{F_j}\quad}
\ar[d]_{\varsigma_{(M_{p,n}, M_{p,q})}\circ_0\mathrm{id}_{F_j}}
&&F_{p,n} \circ(\displaystyle\bigoplus_{i=1}^mF_i)
\ar[rr]^{(\Theta(F_{p,n}))^{-1}}\ar[d]_{\varsigma_{(M_{p,n}, M_{p,q})}\circ_0\mathrm{id}_{F}}
&& (\displaystyle\bigoplus_{i=1}^mF_i)\circ F_{p,n}
\ar[d]_{\mathrm{id}_{F}\circ_0\varsigma_{(M_{p,n}, M_{p,q})}}\ar[rr]^{\quad \pi_{F_k}\circ_0\mathrm{id}_{F_{p,n}}}
&&  F_k\circ F_{p,n}
\ar[d]_{\mathrm{id}_{F_k}\circ_0\varsigma_{(M_{p,n}, M_{p,q})}}\\
F_{p,q} \circ F_j\ar[rr]^{\mathrm{id}_{F_{p,q}}\circ_0 \iota_{F_j}\quad}&&F_{p,q} \circ(\displaystyle\bigoplus_{i=1}^mF_i)
\ar[rr]^{(\Theta(F_{p,q}))^{-1}}&&
(\displaystyle\bigoplus_{i=1}^mF_i)\circ F_{p,q}\ar[rr]^{\quad\pi_{F_k}\circ_0\mathrm{id}_{F_{p,q}}}&& F_k\circ F_{p,q}.}
\end{displaymath}
Note that each $\varsigma_{(M_{p,n}, M_{p,q})}\circ_0\mathrm{id}_{F_j}$ is surjective as 
tensor functors are right exact.
From the previous paragraph and the commutativity of the above diagram, we 
obtain that equation~\eqref{eqn2}, for each $F_{p,q}$,
follows from the surjectivity of each $\varsigma_{(M_{p,n}, M_{p,q})}\circ_0\mathrm{id}_{F_j}$.
Therefore, by Proposition~\ref{prop}, now we only need to determine 
objects $(F,\Theta)$ in $\mathcal{Z}(\mathscr{D}_{\mathcal{CI}^{(1)}(A)})$, for all indecomposable $1$-morphisms~$F$. This
reduces the problem to the case $m=1$.

Assume that $F$ is indecomposable,  
then $\Theta$ is a natural isomorphism from the functor of
$F\circ {}_-$ to the functor ${}_-\circ F$ given by a family of isomorphisms
\begin{displaymath}
\Theta(K): F\circ K\to K\circ F, \quad K\in \mathscr{D}_{\mathcal{CI}^{(1)}(A)}(\mathtt{i},\mathtt{i}).
\end{displaymath}
By Remark~\ref{rr1}~\eqref{rr1b} and Lemma~\ref{lx1}, for any indecomposable $1$-morphism $K$
there exist some $p,q\in \{1,2,\dots, n\}$ such that $F\circ K\cong F_{p,q}\cong K\circ F$.

Assume that the $A$-$A$-bimodule identified with each $F\circ F_{p,n}\cong F_{p,n}\circ
F$ has dimension $s_p\in\mathbbm{Z}_{>0}$ and the $A$-$A$-bimodule identified with
$F$ has dimension $t\geq s_p$. We note that, as usual, the action of morphism on modules is the right action
and $\Theta(\mathbbm{1}_{\mathtt{i}})=\mathrm{id}_F$.
Applying the naturality of $\Theta$ to the $2$-morphism
each $2$-morphism $F_{p,n}\hookrightarrow \mathbbm{1}_{\mathtt{i}}$, where~$1\leq p\leq n$,
induced by $\iota_{(J_p, A)}$,
in an appropriate basis, we have the following matrix identity:
\begin{displaymath}
\mathtt{C}_{p,n}\cdot (1_{s_p}|0)_{s_p\times t}=(1_{s_p}|0)_{s_p\times t}
\end{displaymath}
where $\mathtt{C}_{p,n}$ is, of size $s_p\times s_p$, the matrix for the isomorphism $\Theta(F_{p,n})$.
Therefore we get $\mathtt{C}_{p,n}=1_{s_p}$ and $\Theta(F_{p,n})=\epsilon^{F}(F_{p,n})$ for all $1\leq p\leq n$.

Assume that the $A$-$A$-bimodule identified with each $F\circ F_{p,q}\cong F_{p,q}\circ
F$ has dimension $s_{pq}\in\mathbbm{Z}_{>0}$, we have $s_{pq}\leq s_p$.
Similarly, applying the naturality of $\Theta$ to
each $2$-morphism $F_{p,n}\tto F_{p,q}$, where~$1\leq p\leq q\leq n$,
induced by the canonical map $\varsigma_{(M_{p,n}, M_{p,q})}: J_p\tto J_p/J_{q+1}$,
in an appropriate basis, we have the following matrix identity:
\begin{displaymath}
 \mathtt{C}_{p,n}\cdot \left(
  \begin{array}{c}
  1_{s_{pq}}  \\ \hline
  0
  \end{array}\right)_{s_{p}\times s_{pq}}=\left(
  \begin{array}{c}
  1_{s_{pq}}  \\ \hline
  0
  \end{array}\right)_{s_{p}\times s_{pq}}\cdot \mathtt{C}_{p,q}
\end{displaymath}
where $\mathtt{C}_{p,q}$ is, of size $s_{pq}\times s_{pq}$, the matrix for the isomorphism $\Theta(F_{p,q})$.
Therefore we get $\mathtt{C}_{p,q}=1_{s_{pq}}$ and $\Theta(F_{p,q})=\epsilon^{F}(F_{p,q})$
for all $1\leq p\leq q\leq n$. Hence we have $\Theta=\epsilon^{F}$.

By definition, for any indecomposable $1$-morphism $F$, we have
\begin{displaymath}
\mathrm{End}_{\mathcal{Z}(\mathscr{D}_A)}((F,\epsilon^F))\subset
\mathrm{End}_{\mathscr{D}_{\mathcal{CI}^{(1)}(A)}}(F)=\Bbbk \mathrm{id}_{F}.
\end{displaymath}
It is clear that any scalar multiple of $\mathrm{id}_{F}$ satisfies the formula~\eqref{2c}.
Thus the above embedding is, in fact, an equality.
For any pair of nonisomorphic indecomposable $1$-morphisms $F$ and $F'$, we also have
\begin{displaymath}
\mathrm{Hom}_{\mathcal{Z}(\mathscr{D}_A)}((F,\epsilon^F),(F',\epsilon^{F'}))\subset \mathrm{Hom}_{\mathscr{D}_{\mathcal{CI}^{(1)}(A)}}(F,F'),
\end{displaymath}
where the right hand side has dimension at most $1$ by Lemma~\ref{lx1}. Because of commutativity of
composition of $1$-morphisms in $\mathscr{D}_{\mathcal{CI}^{(1)}(A)}(\mathtt{i},\mathtt{i})$,
it is sufficient to prove
\begin{displaymath}
\varsigma_{(M_{i,j},\,M_{i',j'})}\circ_0 \mathrm{id}_F=\mathrm{id}_F\circ_0\varsigma_{(M_{i,j},\,M_{i',j'})}
\end{displaymath}
for any indecomposable $1$-morphism $F$ and any $i'\leq i\leq j'\leq j$. This is easily checked
using the definition of $\varsigma_{(M_{i,j},\,M_{i',j'})}$. Therefore we get equality~\eqref{m1}.

The forgetful functor from the Drinfeld center $\mathcal{Z}(\mathscr{D}_{\mathcal{CI}^{(1)}(A)})$
to the original $2$-category $\mathscr{D}_{\mathcal{CI}^{(1)}(A)}(\mathtt{i},\mathtt{i})$, which sends $(F,\epsilon^F)$ to $F$, is fully faithful and thus a biequivalence.
\end{proof}

\section{Twisted identity bimodules for truncated polynomial algebras}\label{s5}

\subsection{The fiat $2$-category $\mathscr{D}$}

Let $k,d$ be two positive integers and $\zeta\in \mathds{C}$ be a primitive $d$-th root of unity.
Set $D=\mathds{C}[x]/(x^k)$. For each $i\in\mathbb{Z}_{\geq 0}$, denote by $\varphi_i$ the
algebra automorphism of $D$ sending $x$ to $\zeta^ix$. From the definition we have
$\varphi_i\varphi_j=\varphi_{i+j}=\varphi_j\varphi_i$ for any $i,j\in \mathbb{Z}_{\geq 0}$.
If $k=1$, then $D\cong \mathds{C}$ and its endomorphism algebra only consists of scalars
of the identity homomorphism. If $d=1$, then $\zeta=1$ and all $\varphi_i$ are equal to the
identity homomorphism. Note that $D$ is a natural $D$-$D$-bimodule via left and right multiplications.
Twisting the left multiplication by the automorphism $\varphi_i$,
we get a new $D$-$D$-bimodule structure of $D$ as follows:
\begin{displaymath}
u\cdot v \cdot w=\varphi_i(u)vw,
\end{displaymath}
where $u,v,w\in D$. Denote this new $D$-$D$-bimodule by ${}^{\varphi_i}D$.
If we have either $k=1$ or $d=1$, then ${}^{\varphi_i}D\cong D$ as $D$-$D$-bimodules.
Therefore, from now on we assume that both $k,d>1$.
Since the order of $\zeta$ is $d$, then we have $\varphi_i=\varphi_j$ if $i\equiv j\ (\mathrm{mod}\ d)$
and, moreover, ${}^{\varphi_i}D={}^{\varphi_j}D$ in this case.

For each $i\in\mathbb{Z}_{\geq 0}$, denote by $F_i$ the endofunctor of $D$-mod defined as follows:
given a $D$-module $M$, the module $F_i(M)$ is equal to $M$ as a vector space, while
the action of $D$ on $F_i(M)$ is twisted by $\varphi_i$:
\begin{displaymath}
u\cdot m:= \varphi_i(u)m,\quad\text{ where }\quad m\in M, u\in D.
\end{displaymath}
Note that $F_i$ is isomorphic to the functor ${}^{\varphi_i}D\otimes_{D}{}_-$.
We also note that the functor $F_1^d$ is equal (and not only isomorphic) to $F_0$.

Define the $2$-category $\mathscr{D}$ to have
\begin{itemize}
\item one object $\mathtt{i}$ (which we identify with $D$-mod);
\item as $1$-morphisms, all possible direct sums of the $F_i$'s;
\item as $2$-morphisms, all  natural transformations of functors.
\end{itemize}
The category $\mathscr{D}$ is, clearly, a finitary $2$-category.
In fact, it is a fiat $2$-category where the weak involution $\ast$ sends the
functor $F_i$ to its inverse (and hence also biadjoint) functor
$F_{d-i}$. This category is a non-trivial generalization of \cite[Subsection~3.2]{MM5} in the case
of a cyclic group.

\subsection{Quiver for the underlying algebra for the principal $2$-representation of $\mathscr{D}$}

For any $i\in \mathbb{Z}_{\geq 0}$, we denote by $q_i:{}^{\varphi_i}D\to {}^{\varphi_i}D$
the $D$-$D$-bimodule homomorphism sending $1$ to $x$.
For any $i\not\equiv j\ (\mathrm{mod}\ d)$, we denote by $p_{ij}:{}^{\varphi_i}D\to {}^{\varphi_j}D$
the $D$-$D$-bimodule homomorphism sending $1$ to $x^{k-1}$.
From the definition we immediately have $q_i^t=0$ for any positive integer
$t\geq k$ and, also,  $p_{ij} p_{st}=0$ whenever the composition makes sense.

\begin{lemma}\label{l7}
For any $i,j\in\mathbb{Z}_{\geq 0}$, we have
\begin{displaymath}
\mathrm{Hom}_{D\text{-}D}({}^{\varphi_i}D,{}^{\varphi_j}D)=\left\{\begin{array}{ll} D,
& if\ i\equiv j\ (\mathrm{mod}\ d);\\ \mathds{C} p_{ij}, & if\ i\not\equiv j\ (\mathrm{mod}\ d).
\end{array}\right.
\end{displaymath}
\end{lemma}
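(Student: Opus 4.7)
The plan is to exploit the fact that any $D$-$D$-bimodule map $f\colon {}^{\varphi_i}D\to {}^{\varphi_j}D$ is completely determined by $f(1)\in D$, and then translate the bimodule compatibility conditions into an explicit equation on $f(1)$. Writing $f(1)=a\in D$, the right $D$-linearity is automatic (since the right action on both sides is the usual one), so the only nontrivial constraint comes from left $D$-linearity.

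First I would unfold left $D$-linearity: for any $u\in D$ one has $u\cdot 1=\varphi_i(u)$ in ${}^{\varphi_i}D$ and $u\cdot a=\varphi_j(u)a$ in ${}^{\varphi_j}D$, so the condition $f(u\cdot 1)=u\cdot f(1)$ together with right $D$-linearity forces
\begin{displaymath}
f(w)=\varphi_{j-i}(w)\cdot a \quad\text{and}\quad f(w)=a\cdot w \qquad\text{for all } w\in D,
\end{displaymath}
(where in the first equation I substituted $w=\varphi_i(u)$, using that $\varphi_i$ is bijective). Consequently the existence of such an $f$ is equivalent to the single identity
\begin{displaymath}
\varphi_{j-i}(w)\,a = a\,w \qquad\text{for all } w\in D.
\end{displaymath}

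Next I would split into the two cases of the statement. If $i\equiv j\pmod d$ then $\varphi_{j-i}=\mathrm{id}$ and, since $D$ is commutative, the identity is automatic; hence $a$ can be any element of $D$ and the homomorphism space is $D$ in a canonical way. If $i\not\equiv j\pmod d$, specializing $w=x$ gives $(\zeta^{j-i}-1)\,xa=0$ in $D$, and since $\zeta^{j-i}\neq 1$ this forces $xa=0$, i.e.\ $a\in\mathrm{Ann}_D(x)=\mathds{C} x^{k-1}$. A quick check on monomials $w=x^s$ shows that $a=cx^{k-1}$ does satisfy the identity (both sides vanish for $s\geq 1$), so the homomorphism space is exactly $\mathds{C} x^{k-1}=\mathds{C} p_{ij}$.

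No step looks genuinely hard here; the only subtlety is being careful with the direction of the twist so that $\varphi_i$ on the source turns into $\varphi_{j-i}$ on the equation (and not $\varphi_{i-j}$), and noting that commutativity of $D$ is what makes the right $D$-linearity transparent. Given this, the conclusion matches the two cases of the lemma, with the chosen generator $p_{ij}$ (sending $1$ to $x^{k-1}$) providing the basis in the twisted case.
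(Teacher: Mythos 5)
Your proof is correct and takes essentially the same approach as the paper: reduce to the value $f(1)\in D$, unfold left $D$-linearity against the twist to get the constraint $\varphi_{j-i}(w)a=aw$, and specialize to $w=x$. The paper's version is marginally terser, checking only the single generator $x$ (sufficient since $D$ is generated by $x$ and ${}^{\varphi_i}D$ by $1$), but the argument is identical in substance, including the use of commutativity of $D$ and the identification of $\mathrm{Ann}_D(x)=\mathds{C}x^{k-1}$.
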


\begin{proof}
Let $f:{}^{\varphi_i}D\to {}^{\varphi_j}D$ be a non-zero $D$-$D$-bimodule
homomorphism. Note that ${}^{\varphi_i}D$ is generated by the identity element
as a $D$-$D$-bimodule.
Hence $f$ is uniquely determined by $f(1)$, which satisfies
\begin{equation}\label{i}
f(1)\zeta^i x=f(1\cdot \zeta^ix)=f(\zeta^ix)=f(x\cdot 1)=x\cdot f(1)=\zeta^jxf(1).
\end{equation}
Since $\zeta$ is a primitive $d$-th root of unity, we have $\zeta^i\neq\zeta^j$
for $i\not\equiv j\ (\mathrm{mod}\ d)$.
Then the equation \eqref{i} implies $xf(1)=0$, that is $f(1)\in \mathds{C} x^{k-1}$.
For $i\equiv j\ (\mathrm{mod}\ d)$, we have $\zeta^i=\zeta^j$ and equation~\eqref{i}
holds automatically in this case. As $f\neq 0$, we can choose all nonzero element in $D$ to be $f(1)$.
This claim follows.
\end{proof}

By this lemma, we know that the set $\{\mathrm{id}_{{}^{\varphi_i}D},q_i,q_i^2,\ldots,q_i^{k-1}\}$
forms a basis of the endomorphism algebra $\mathrm{End}_{D\text{-}D}({}^{\varphi_i}D)$. Therefore $\mathrm{End}_{D\text{-}D}({}^{\varphi_i}D)$ is generated by the identity
element $\mathrm{id}_{{}^{\varphi_i}D}$ and the nilpotent element $q_i$ of order $k$.

Now we can determine the quiver $\mathcal{Q}^D$ for the underlying algebra of the
principal $2$-representation $\mathbf{P}_{\mathtt{i}}$ of $\mathscr{D}$. The vertices
of $\mathcal{Q}^D$ are given by indecomposable $D$-$D$-bimodules ${}^{\varphi_i}D$,
where $0\leq i\leq d-1$. For any $0\leq i\neq j\leq d-1$, there is exactly one arrow
from ${}^{\varphi_i}D$ to ${}^{\varphi_j}D$ and this arrow corresponds to $p_{ji}$.
For each $i$, there is exactly one arrow from ${}^{\varphi_i}D$ to ${}^{\varphi_i}D$ and
this arrow corresponds to $q_i$. We also have to impose all the obvious relations
for our generating homomorphisms:
\begin{equation}\label{e1}
q_jp_{ij}=0=p_{ij}q_i,\quad q_i^k=0,\quad p_{ij} p_{si}=0=p_{jt} p_{ij},
\end{equation}
where $0\leq i,j,s,t \leq d-1$ are such that $i\neq j, s\neq i$ and $j\neq t$.
In particular, if $k=2$, these relations just say that all paths of
length two in our quiver equal zero.

\begin{example}
{\rm
If $k=2, d=3$, then the corresponding quiver $\mathcal{Q}^D$ has three vertices given
by the indecomposable $D$-$D$-bimodules ${}^{\varphi_0}D, {}^{\varphi_1}D, {}^{\varphi_2}D$,
which we denote by $0, 1, 2$, respectively. Then $\mathcal{Q}^D$ is given as follows:
\begin{displaymath}
\xymatrix@R=1.5pc@C=1.5pc{& &0\ar@(ul,ur)[]\ar@<-.45ex>[ddll]\ar@<-.45ex>[ddrr] &&\\
&&&&\\
1\ar@(dl,ul)[]\ar@<-.5ex>[rrrr]\ar@<-.45ex>[uurr] &&&& 2\ar@(ur,dr)[]\ar@<-.45ex>[llll]\ar@<-.45ex>[uull]}
\end{displaymath}
with the relations that all paths of length two equal zero.
}
\end{example}

\begin{example}
{\rm
If $k=2, d=4$, then the corresponding quiver $\mathcal{Q}^D$ has four vertices given by indecomposable $D$-$D$-bimodules ${}^{\varphi_0}D, {}^{\varphi_1}D, {}^{\varphi_2}D, {}^{\varphi_3}D$. Using similar notation, one gets the quiver $\mathcal{Q}^D$ given as follows:
\begin{displaymath}
\xymatrix@R=1.5pc@C=1.5pc{0\ar@(dl,ul)[]\ar@<-.45ex>[rrrr]\ar@<-.45ex>[dddrrrr]\ar@<-.45ex>[ddd]&&&& 3\ar@(ur,dr)\ar@<-.45ex>[ddd]\ar@<-.45ex>[llll]\ar@<-.45ex>[dddllll]|(.49)\hole|(.51)\hole\\
&&&&\\
&&&&\\
1\ar@(dl,ul)[]\ar@<-.45ex>[rrrr]\ar@<-.45ex>[uuu]\ar@<-.45ex>[uuurrrr]|(.49)\hole|(.51)\hole &&&& 2\ar@(ur,dr)[]\ar@<-.45ex>[uuullll]\ar@<-.45ex>[llll]\ar@<-.45ex>[uuu]}
\end{displaymath}
with the relations that all paths of length two equal zero.
}
\end{example}

\subsection{Simple transitive $2$-representations for $\mathscr{D}$}
Denote by $\mathscr{I}$ the $2$-ideal in $\mathscr{D}$ generated by all $p_{ij}, q_i$,
where $0\leq i\neq j\leq d-1$. Then the quotient $2$-category $\mathscr{D}/\mathscr{I}$
is exactly the $2$-category $\mathscr{G}_G$ from \cite[Subsection~3.2]{MM5}, where
$G\cong(\mathbb{Z}_d,+)$. Let $\Theta:\mathscr{D}\to \mathscr{D}/\mathscr{I}$ be the
quotient map.

Let $\mathcal{A}$ be a fixed small category equivalent to $\mathds{C}$-mod.
For $r\vert d$, consider the subgroup $H_r$ of $\mathbb{Z}_d$ generated by $r$.
Denote by $\mathbf{V}_{r}$ the $2$-representation of $\mathscr{D}$ obtained by pulling
back, via $\Theta$, the $2$-representation $\mathbf{M}_{H_r,\mathcal{A}}$ of
$\mathscr{D}/\mathscr{I}$ from \cite[Subsection~3.2]{MM5}.

\begin{theorem}
Every simple transitive $2$-representation of $\mathscr{D}$ is
equivalent to $\mathbf{V}_{r}$ for some positive integer $r|d$.
\end{theorem}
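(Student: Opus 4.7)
The strategy is to show that any simple transitive $2$-representation $\mathbf{M}$ of $\mathscr{D}$ descends along the quotient $\Theta:\mathscr{D}\to\mathscr{D}/\mathscr{I}\simeq\mathscr{G}_{\mathbb{Z}_d}$ to a simple transitive $2$-representation of the group $2$-category $\mathscr{G}_{\mathbb{Z}_d}$, and then to apply the classification of simple transitive $2$-representations of $\mathscr{G}_{G}$ recorded in \cite[Subsection~3.2]{MM5}. The crucial reduction is the vanishing $\mathbf{M}(\mathscr{I})=0$, which decomposes into checking that every $\mathbf{M}(q_i)$ and every $\mathbf{M}(p_{ij})$ is zero.

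First I would exploit the fiat structure: from $F_i\circ F_{d-i}\cong\mathbbm{1}_\mathtt{i}$ each $F_i$ acts on $\mathbf{M}(\mathtt{i})$ as an auto-equivalence, and hence every $1$-morphism of $\mathscr{D}$ preserves the Jacobson radical $\mathrm{Rad}(\mathbf{M}(\mathtt{i}))$ of the Krull--Schmidt category $\mathbf{M}(\mathtt{i})$. This radical is therefore a $\mathscr{D}$-stable ideal containing no identity of a non-zero object, so by simple transitivity of $\mathbf{M}$ it must vanish. Consequently $\mathbf{M}(\mathtt{i})$ is semisimple with $\mathrm{End}(X)\cong\mathds{C}$ for every indecomposable $X$, and transitivity of $\mathbf{M}$ identifies the induced permutation $\mathbb{Z}_d$-action on iso-classes of indecomposables with the coset action on $\mathbb{Z}_d/H_r$ for a unique divisor $r\mid d$. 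The vanishing $\mathbf{M}(q_i)=0$ is now immediate, since $\mathbf{M}(q_i)_X\in\mathrm{End}(F_iX)\cong\mathds{C}$ satisfies $\mathbf{M}(q_i)_X^k=0$.

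For $p_{ij}$ with $i\neq j$, the morphism $\mathbf{M}(p_{ij})_X\in\mathrm{Hom}(F_iX,F_jX)$ is automatically zero unless $F_iX\cong F_jX$, i.e.\ $j-i\in H_r$, in which case it is a scalar $\lambda_{ij,X}\in\mathds{C}$. A direct bimodule computation gives the horizontal-composition identity $\mathrm{id}_{F_\ell}\circ_0 p_{ij}=p_{\ell+i,\ell+j}$ in $\mathscr{D}$, which after applying $\mathbf{M}$ and choosing compatible isomorphisms $F_{i'}X\cong F_{j'}X$ yields the shift-invariance $\lambda_{\ell+i,\ell+j,X}=\lambda_{ij,X}$. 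Setting $s=2i-j$ we have $i-s=j-i$, so $\lambda_{si,X}=\lambda_{ij,X}$, and the vertical relation $p_{ij}\circ_1 p_{si}=0$ then forces $\lambda_{ij,X}^2=0$, hence $\lambda_{ij,X}=0$. Therefore $\mathbf{M}(p_{ij})=0$ for all $i\neq j$.

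Having established $\mathbf{M}(\mathscr{I})=0$, the $2$-representation $\mathbf{M}$ factors uniquely as $\mathbf{M}=\mathbf{M}'\circ\Theta$ for a simple transitive $2$-representation $\mathbf{M}'$ of $\mathscr{G}_{\mathbb{Z}_d}$. Since $\mathbf{M}(\mathtt{i})$ is equivalent to $(\mathds{C}\text{-mod})^{r}$ with $\mathbb{Z}_d$ acting by the coset action on $\mathbb{Z}_d/H_r$, the classification in \cite[Subsection~3.2]{MM5} identifies $\mathbf{M}'$ with $\mathbf{M}_{H_r,\mathcal{A}}$ for $\mathcal{A}\simeq\mathds{C}\text{-mod}$, and pulling back along $\Theta$ gives $\mathbf{M}\simeq\mathbf{V}_r$ as required. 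The main obstacle in this plan is the step producing the shift-invariance of $\lambda_{ij,X}$ and feeding it into the relation $p_{ij}\circ_1 p_{si}=0$ at $s=2i-j$: one must choose the isomorphisms $F_{i'}X\cong F_{j'}X$ throughout the orbit compatibly so that the two occurrences of the scalar in the composition genuinely match, allowing the quadratic-nilpotence argument $\lambda^{2}=0\Rightarrow\lambda=0$ to close the argument.
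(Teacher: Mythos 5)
Your proposal follows the same route as the paper: establish that the radical of $\mathbf{M}(\mathtt{i})$ is $\mathscr{D}$-stable (using that the $F_i$ are auto-equivalences) and hence zero by simple transitivity, conclude $\mathbf{M}(\mathtt{i})$ is semisimple, show the nilpotent generators $q_i$ and $p_{ij}$ die under $\mathbf{M}$, factor through $\Theta$, and invoke the $\mathscr{G}_G$-classification from \cite[Subsection~3.2]{MM5}. The paper disposes of the $p_{ij}$ in a single sentence (``nilpotent, hence mapped to zero''), whereas you spell out an argument via the shift identity $\mathrm{id}_{F_l}\circ_0 p_{ij}=p_{i+l,j+l}$ together with the relation $p_{ij}\circ_1 p_{si}=0$; this extra care is actually warranted, since a nilpotent off-diagonal morphism in a semisimple additive category need not vanish for free, so the paper's justification is terser than it perhaps ought to be.

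The ``main obstacle'' you flag at the end is not actually an obstacle, and you can discard the worry about choosing compatible isomorphisms along the orbit. You do not need to track scalars $\lambda_{ij,X}$ or have them literally match: it suffices to observe that $\mathbf{M}(p_{si})_X = \mathbf{M}(F_{i-j})\bigl(\mathbf{M}(p_{ij})_X\bigr)$ (with $s=2i-j$, so $l=s-i=i-j$), and since $\mathbf{M}(F_{i-j})$ is an equivalence, $\mathbf{M}(p_{si})_X$ is nonzero whenever $\mathbf{M}(p_{ij})_X$ is. But any nonzero morphism between isomorphic simple objects of a semisimple category is an isomorphism, so $\mathbf{M}(p_{ij})_X\circ\mathbf{M}(p_{si})_X$ would then be an isomorphism, contradicting $\mathbf{M}(p_{ij}\circ_1 p_{si})=0$. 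No normalization of the identifications is needed; the quadratic-nilpotence phrasing $\lambda^2=0\Rightarrow\lambda=0$ can simply be replaced by ``composite of isomorphisms is nonzero.''
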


\begin{proof}
Let $\mathbf{M}$ be a simple transitive $2$-representation of $\mathscr{D}$.
Since each $F_i$ is an equivalence, it maps an indecomposable object $X\in \mathbf{M}(\mathtt{i})$
to an indecomposable object $Y\in \mathbf{M}(\mathtt{i})$, moreover, the radical of the endomorphism
ring of $X$, being nilpotent, is mapped to the radical of the endomorphism ring of $Y$. This means that
the radical of $\mathbf{M}(\mathtt{i})$ is $\mathscr{D}$-stable. From the simple transitivity of
$\mathbf{M}$ it thus follows that $\mathbf{M}(\mathtt{i})$ is semi-simple.

Since all $p_{st}$ and  $q_i$, for $0\leq i,s,t \leq d-1, s\neq t$, are nilpotent, it follows
that $\mathbf{M}$ maps all these $2$-morphisms to zero. Therefore the representation $2$-functor
$\mathbf{M}$ factors through $\Theta$.
Thus the assertion of the theorem follows from \cite[Proposition~5]{MM5}.
\end{proof}

\subsection{The Drinfeld center of $\mathscr{D}$}

Note that each $F_i$ is isomorphic to the functor ${}^{\varphi_i}D\otimes_D{}_-$. If we identify the former with the latter, then we can interpret each equality $F_i\circ F_j= F_{i+j}$ as the corresponding $D$-$D$-bimodule isomorphism from ${}^{\varphi_i}D\otimes_D{}^{\varphi_j}D$ to
${}^{\varphi_{i+j}}D$, which sends $1\otimes 1$ to $1$.

\begin{lemma}\label{l10}
For any positive integer $s$ and any $i,j,l\in\mathbb{Z}_{\geq 0}$, where $i\not\equiv j\ (\mathrm{mod}\ d)$, we have
\begin{equation}\label{t2}
\begin{array}{rclcrcl}
\mathrm{id}_{F_l}\circ_0 q_i^s &=& q_{i+l}^s, && q_i^s\circ_0
\mathrm{id}_{F_l}&=&\zeta^{\,ls}q_{i+l}^s,\\
\mathrm{id}_{F_l}\circ_0 p_{ij} &=& p_{i+l,\,j+l},&& p_{ij}\circ_0
\mathrm{id}_{F_l}&=&\zeta^{\,l(k-1)}p_{i+l,\,j+l}.
\end{array}
\end{equation}

\end{lemma}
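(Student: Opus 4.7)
The plan is to work entirely inside the bimodule realization $F_i = {}^{\varphi_i}D\otimes_D{}_-$, in which each $2$-morphism is realized as a $D$-$D$-bimodule homomorphism, and to compute everything after pinning down the canonical isomorphism that witnesses the equality $F_l\circ F_i = F_{l+i}$. Concretely, first I would verify that the map
\begin{displaymath}
\mu_{l,i}\colon {}^{\varphi_l}D\otimes_D{}^{\varphi_i}D\longrightarrow {}^{\varphi_{l+i}}D,\qquad a\otimes b\longmapsto \varphi_i(a)\,b,
\end{displaymath}
is a well-defined $D$-$D$-bimodule isomorphism sending $1\otimes 1$ to $1$. Well-definedness over $D$ amounts to checking that $\mu_{l,i}(ad\otimes b)=\mu_{l,i}(a\otimes \varphi_i(d)b)$, which holds because ${}^{\varphi_i}D$ has its left action twisted by $\varphi_i$; the left $D$-linearity requires $\varphi_i\circ\varphi_l=\varphi_{l+i}$, which is exactly the defining relation of the $\varphi_\bullet$ family.

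Next, under this identification, the horizontal compositions $\mathrm{id}_{F_l}\circ_0\alpha$ and $\alpha\circ_0\mathrm{id}_{F_l}$ are simply $\mathrm{id}\otimes\alpha$ and $\alpha\otimes\mathrm{id}$ between the appropriate tensor products, and each of the four identities in \eqref{t2} can be verified by tracing the image of $1\otimes 1$ and applying the correct $\mu_{\bullet,\bullet}$. For instance, $\mathrm{id}_{F_l}\circ_0 q_i^s$ sends $1\otimes 1$ to $1\otimes x^s$, and $\mu_{l,i}(1\otimes x^s)=x^s$, giving $q_{l+i}^s$. By contrast, $q_i^s\circ_0\mathrm{id}_{F_l}$ sends $1\otimes 1$ to $x^s\otimes 1$, and $\mu_{i,l}(x^s\otimes 1)=\varphi_l(x^s)\cdot 1=\zeta^{ls}x^s$, producing $\zeta^{ls}q_{i+l}^s$. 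The same computation with $p_{ij}$ in place of $q_i^s$ (so $x^s$ is replaced by $x^{k-1}$ and the target bimodule changes) yields the last two identities, with the scalar $\zeta^{l(k-1)}$ coming from $\varphi_l(x^{k-1})=\zeta^{l(k-1)}x^{k-1}$.

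The main obstacle, and the only point that is not purely mechanical, is keeping track of which factor in the tensor product carries the twist, i.e.\ understanding why the scalars appear only when the endomorphism is whiskered on the \emph{left} by $\mathrm{id}_{F_l}$ in the sense of $\circ_0$, which under the bimodule translation becomes whiskering on the \emph{right} of the tensor product. Once the correct formula for $\mu_{l,i}$ is fixed, this asymmetry explains why only one of the two sides in each pair of identities picks up a power of $\zeta$. Everything else is a one-line calculation with $1\otimes 1$, and the four formulas in \eqref{t2} follow immediately.
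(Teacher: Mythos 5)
Your proof is correct and essentially identical to the paper's: both pass to the bimodule realization of each $F_i$ as ${}^{\varphi_i}D\otimes_D{}_-$, identify $F_i\circ F_j$ with ${}^{\varphi_{i+j}}D$ via the bimodule isomorphism sending $1\otimes 1\mapsto 1$, and compute each of the four $2$-morphisms by tracking the image of $1\otimes 1$; you merely make the isomorphism $\mu_{l,i}(a\otimes b)=\varphi_i(a)b$ explicit, whereas the paper instead slides $x^s$ across $\otimes_D$ (writing $x^s\otimes 1=1\otimes\zeta^{ls}x^s=\zeta^{ls}(1\otimes 1)\cdot x^s$), which yields the same scalars. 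One small slip in your closing commentary: the $\zeta$-powers appear when $\mathrm{id}_{F_l}$ is the \emph{right} argument of $\circ_0$ (i.e.\ in $q_i^s\circ_0\mathrm{id}_{F_l}$ and $p_{ij}\circ_0\mathrm{id}_{F_l}$), corresponding to the right tensor factor, not the left as you wrote -- though the computations you actually carry out are the correct ones.
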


\begin{proof}
Note that $q_i$ is the $D$-$D$-bimodule endomorphism of ${}^{\varphi_i}D$ sending $1$ to $x$.
Therefore the morphism $\mathrm{id}_{F_l}\circ_0 q_i^s:F_l\circ F_i\to F_l \circ F_i$ is
identified with the $D$-$D$-bimodule endomorphism of ${}^{\varphi_l}D\otimes_D{}^{\varphi_i}D$
sending $1\otimes 1$ to $1\otimes x^s=(1\otimes 1)\cdot x^s$. If we identify $F_l\circ F_i$
with $F_{i+l}$ through the corresponding $D$-$D$-bimodule,
then the morphism $\mathrm{id}_{F_l}\circ_0 q_i^s:F_{i+l}\to F_{i+l}$ is exactly
identified with the endomorphism $q_{i+l}^s$ of ${}^{\varphi_{i+l}}D$ sending $1$ to $x^s$.

The morphism $q_i^s\circ_0 \mathrm{id}_{F_l}:F_i\circ F_l\to F_i \circ F_l$ is identified
with the $D$-$D$-bimodule endomorphism of ${}^{\varphi_i}D\otimes_D{}^{\varphi_l}D$ sending
$1\otimes 1$ to $x^s\otimes 1$. As
\begin{displaymath}
x^s\otimes 1=1\cdot x^s\otimes 1=1\otimes x^s\cdot 1=1\otimes \zeta^{\,ls}x^s=\zeta^{\,ls}(1\otimes 1)\cdot x^s,
\end{displaymath}
we also obtain that the morphism $q_i^s\circ_0 \mathrm{id}_{F_l}:F_{i+l}\to F_{i+l}$ is exactly
identified with the endomorphism $\zeta^{\,ls}q_{i+l}^s$ of ${}^{\varphi_{i+l}}D$ sending $1$
to $\zeta^{\,ls}x^s$.

Note that $p_{ij}$ is the $D$-$D$-bimodule homomorphism from ${}^{\varphi_i}D$ to ${}^{\varphi_j}D$ sending $1$ to $x^{k-1}$. Similarly to the case of $q_i^s$, we obtain the corresponding relations which completes the proof.
\end{proof}

\begin{remark}\label{r1}
{\rm Since $F_0\cong\mathbbm{1}_{\mathtt{i}}$, we have
\begin{displaymath}
\mathrm{id}_{F_0}\circ_0 f= f= f\circ_0 \mathrm{id}_{F_0}
\end{displaymath}
for any morphism $f\in\mathrm{Hom}_{\mathscr{D}(\mathtt{i},\mathtt{i})}(F_i,F_j)$.}
\end{remark}

For any $\mathbf{b}=(b_1,b_2,\dots,b_k)\in\mathbb{C}^k$, set
\begin{displaymath}
\mathtt{M}_{\mathbf{b}}:=
\left(
\begin{array}{ccccc}
b_1&b_2&b_3&\dots&b_k\\
0&b_1&b_2&\dots&b_{k-1}\\
0&0&b_1&\dots&b_{k-2}\\
\vdots&\vdots&\vdots&\ddots&\vdots\\
0&0&0&\dots&b_1
\end{array}
\right)\quad\text{ and }\quad
\mathtt{D}_{\mathbf{b}}:=
\left(
\begin{array}{ccccc}
b_1&0&0&\dots&0\\
0&b_2&0&\dots&0\\
0&0&b_3&\dots&0\\
\vdots&\vdots&\vdots&\ddots&\vdots\\
0&0&0&\dots&b_k
\end{array}
\right).
\end{displaymath}
If $f_i\in \mathrm{End}_{\mathscr{D}(\mathtt{i},\mathtt{i})}(F_i)$ is such that
\begin{displaymath}
f_i:=a_0\mathrm{id}_{F_i}+a_1q_i+a_2q_i^2+\cdots+a_{k-1}q_i^{k-1},
\end{displaymath}
where $\mathbf{a}=(a_0, a_1,a_2,\ldots, a_{k-1})\in \mathds{C}^k$, then the matrix of $f_i$
with respect to the {\em standard basis}  $1$, $x$, $x^2\dots$, $x^{k-1}$ of
${}^{\varphi_i}D$ is $\mathtt{M}_{\mathbf{a}}$.

Similarly, for $i\not\equiv j$ (mod $d$), if $g_{ij}\in \mathrm{Hom}_{\mathscr{D}(\mathtt{i},\mathtt{i})}(F_i,F_j)$
is such that
\begin{displaymath}
g_{ij}:=c_{k-1}p_{ij},
\end{displaymath}
where $c_{k-1}\in \mathds{C}$, then the matrix of $g_{ij}$ with respect to the standard basis $1$, $x$, $x^2\dots$, $x^{k-1}$ of
${}^{\varphi_i}D$ (resp. ${}^{\varphi_j}D$) is $\mathtt{M}_{\mathbf{c}}$, where $\mathbf{c}=(0, 0,\ldots, 0, c_{k-1})\in \mathds{C}^k$.

Set
$\boldsymbol{\zeta}:=(1,\zeta,\zeta^2,\dots,\zeta^{k-1})\in\mathds{C}^k$.
To understand the Drinfeld center $\mathcal{Z}(\mathscr{D})$, we first try to describe all pairs $(F, \Phi)$
in $\mathcal{Z}(\mathscr{D})$ and the corresponding morphism spaces in the case of indecomposable $F$.
\vspace{1cm}

\begin{theorem}\label{th28}
{\hspace{2mm}}

\begin{enumerate}[$($i$)$]
\item \label{th28.1}
All objects of the category $\mathcal{Z}(\mathscr{D})$ with indecomposable first components are, up to isomorphism,
$(F_0,\Phi^{(0)})$, where $\Phi^{(0)}$ is uniquely determined by the isomorphism
\begin{displaymath}
\Phi^{(0)}(F_1):=\mathrm{id}_{F_1}+a_1q_1+a_2q_1^2+\cdots+a_{k-1}q_1^{k-1}\in \mathrm{End}_{\mathscr{D}(\mathtt{i},\mathtt{i})}(F_1),
\end{displaymath}
and $\mathbf{a}=\mathbf{a}_{\Phi}=(1, a_1,a_2,\ldots, a_{k-1})\in \mathds{C}^k$ is such that
\begin{equation}\label{eqc1}
(\mathtt{D}_{\boldsymbol{\zeta}}\cdot\mathtt{M}_{\mathbf{a}})^d=1_k.
\end{equation}
\item \label{th28.2}
If $k\leq d$, then equality~\eqref{eqc1} holds automatically. If $k>d$, then equality~\eqref{eqc1}
is equivalent to the fact that $\mathtt{D}_{\boldsymbol{\zeta}}\cdot\mathtt{M}_{\mathbf{a}}$ is diagonalizable.
\item \label{th28.3}
For any two objects $(F_0,\Phi^{(0)}),(F_0,\Psi^{(0)})$ in 
$\mathcal{Z}(\mathscr{D})$, the corresponding morphism space $\mathrm{Hom}_{\mathcal{Z}(\mathscr{D})}((F_0,\Phi^{(0)}),(F_0,\Psi^{(0)}))$ consists of elements of the form:
\begin{equation}\label{ee1}
f=l_0\mathrm{id}_{F_0}+l_1q_0+l_2q_0^2+\cdots+l_{k-1}q_0^{k-1} \in \mathrm{End}_{\mathscr{D}(\mathtt{i},\mathtt{i})}(F_0)
\end{equation}
for some $\mathbf{l}=\mathbf{l}_{f}=(l_0,l_1,l_2,\ldots, l_{k-1})\in \mathds{C}^k$ such that
\begin{equation}\label{eqc2}
\mathtt{D}_{\boldsymbol{\zeta}}\mathtt{M}_{\mathbf{a}_{\Phi}}\mathtt{M}_{\mathbf{l}}
=\mathtt{M}_{\mathbf{l}}\mathtt{D}_{\boldsymbol{\zeta}}\mathtt{M}_{\mathbf{a}_{\Psi}}.
\end{equation}
\end{enumerate}
\end{theorem}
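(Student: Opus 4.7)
The proof strategy is to translate the two axioms defining an object of the Drinfeld center (naturality of $\Phi$ with respect to $2$-morphisms and the hexagon \eqref{1c}) into explicit matrix equations, after reducing the data of $(F,\Phi)$ to the single invertible $2$-endomorphism $\Phi(F_1)$.

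For part \eqref{th28.1}, I would first suppose $F=F_i$ is indecomposable and identify both $F_i\circ F_j$ and $F_j\circ F_i$ with $F_{i+j}$ via the canonical bimodule isomorphism. By Lemma~\ref{l7}, each $\Phi(F_j)$ is then a polynomial in $q_{i+j}$. Applying naturality of $\Phi$ to the $2$-morphism $q_j\colon F_j\to F_j$ and using Lemma~\ref{l10} to compute the two horizontal compositions yields
\[
\Phi(F_j)\circ_1 q_{i+j}\;=\;\zeta^{\,i}\,q_{i+j}\circ_1\Phi(F_j).
\]
Since $\Phi(F_j)$ is a polynomial in $q_{i+j}$ it commutes with $q_{i+j}$, so, as $\Phi(F_j)$ is invertible and $k>1$, this forces $\zeta^{\,i}=1$, i.e.\ $i\equiv 0\pmod d$; hence $F\cong F_0$. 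Naturality with respect to $p_{jl}$ is then automatic because each positive power of $q$ annihilates, and is annihilated by, $p_{jl}$. Once $F=F_0$ is fixed, the hexagon \eqref{1c} (applied in its telescoped form \eqref{e5}) expresses each $\Phi(F_1^r)$ as a product of $r$ factors, each a conjugate of $\mathtt{M}_{\mathbf{a}}$ by an appropriate power of $\mathtt{D}_{\boldsymbol{\zeta}}$ (composition with $\mathrm{id}_{F_l}$ on the right multiplies the coefficient of $q^s$ by $\zeta^{ls}$ by Lemma~\ref{l10}, whereas composition on the left leaves the matrix unchanged). Setting $r=d$, using $F_1^d=F_0$ together with $\Phi(F_0)=\mathrm{id}_{F_0}$ from Remark~\ref{re1}, and simplifying the diagonal factors with $\mathtt{D}_{\boldsymbol{\zeta}}^{\,d}=1_k$ collapses the product to $(\mathtt{D}_{\boldsymbol{\zeta}}\mathtt{M}_{\mathbf{a}})^d=1_k$, which is exactly \eqref{eqc1}. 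The same telescoping shows that $\Phi^{(0)}$ is uniquely determined by $\Phi^{(0)}(F_1)$.

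For part \eqref{th28.2}, the matrix $\mathtt{D}_{\boldsymbol{\zeta}}\mathtt{M}_{\mathbf{a}}$ is upper triangular with diagonal entries $1,\zeta,\zeta^2,\ldots,\zeta^{k-1}$. If $k\leq d$ these entries are pairwise distinct (since $\zeta$ has order $d$), so the matrix is diagonalizable with eigenvalues that are $d$-th roots of unity, and $(\mathtt{D}_{\boldsymbol{\zeta}}\mathtt{M}_{\mathbf{a}})^d=1_k$ holds automatically. If $k>d$ the diagonal entries repeat; however, every eigenvalue still has order dividing $d$, so by the Jordan canonical form $(\mathtt{D}_{\boldsymbol{\zeta}}\mathtt{M}_{\mathbf{a}})^d=1_k$ is equivalent to all Jordan blocks having size one, i.e., to diagonalizability.

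For part \eqref{th28.3}, a morphism in $\mathcal{Z}(\mathscr{D})$ is by definition a $2$-morphism $f\in\mathrm{End}_{\mathscr{D}(\mathtt{i},\mathtt{i})}(F_0)$ satisfying \eqref{2c} for every $K$. Using the hexagons for both $\Phi^{(0)}$ and $\Psi^{(0)}$, a short induction on $j$ shows that \eqref{2c} for $K=F_1$ implies \eqref{2c} for every $K=F_1^{\,j}$, and hence for every $K$. Writing $f$ in the form \eqref{ee1} with matrix $\mathtt{M}_{\mathbf{l}}$ and translating $\mathrm{id}_{F_1}\circ_0 f$ and $f\circ_0\mathrm{id}_{F_1}$ into matrices by Lemma~\ref{l10}, the condition \eqref{2c} for $K=F_1$ becomes precisely \eqref{eqc2}. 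The main obstacle throughout is the bookkeeping: tracking on which side and with which exponent the diagonal factor $\mathtt{D}_{\boldsymbol{\zeta}}$ appears in each horizontal composition, and verifying that the telescoped iterated hexagon really collapses to $(\mathtt{D}_{\boldsymbol{\zeta}}\mathtt{M}_{\mathbf{a}})^d$ rather than to some twisted variant. Once the dictionary between Lemma~\ref{l10} and matrix multiplication is set up, each statement reduces to elementary linear algebra.
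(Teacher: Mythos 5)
Your proof follows essentially the same line of attack as the paper: reduce the data to the invertible $2$-endomorphism $\Phi(F_1)$, translate horizontal compositions into conjugations by powers of $\mathtt{D}_{\boldsymbol{\zeta}}$ via Lemma~\ref{l10}, and then combine the telescoped hexagon~\eqref{e5} with $F_1^d=F_0$ and $\Phi(F_0)=\mathrm{id}$ to obtain~\eqref{eqc1}. Your commutator argument forcing $F\cong F_0$ is a clean variant of the paper's coefficient comparison, and parts~\eqref{th28.2} and~\eqref{th28.3} match the paper's reasoning.

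However, there is a genuine gap in your proof of part~\eqref{th28.1}. You assert that once $F=F_0$ is fixed, naturality of $\Phi$ with respect to the $2$-morphisms $p_{jl}$ is ``automatic because each positive power of $q$ annihilates, and is annihilated by, $p_{jl}$.'' This is not correct: the $\mathrm{id}$-coefficient is not killed. Before normalization, write $\Phi^{(0)}(F_1)=a_0\mathrm{id}_{F_1}+a_1q_1+\cdots$ with $a_0\in\mathds{C}^{\times}$. The telescoping formula then makes the $\mathrm{id}$-coefficient of $\Phi^{(0)}(F_j)$ equal to $a_0^{\,j}$, which depends on $j$. Since $p_{jl}$ kills the positive powers of $q$ but not the identity term, the naturality square with $p_{jl}\colon F_j\to F_l$ yields $p_{jl}\circ_1\Phi^{(0)}(F_j)=a_0^{\,j}p_{jl}$ and $\Phi^{(0)}(F_l)\circ_1 p_{jl}=a_0^{\,l}p_{jl}$, so this naturality forces $a_0^{\,j}=a_0^{\,l}$ for all $j\neq l$ and hence $a_0=1$. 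This is exactly how the paper obtains the normalization $\mathbf{a}_{\Phi}=(1,a_1,\ldots,a_{k-1})$. In contrast, the hexagon condition~\eqref{eqc1} alone gives only $a_0^d=1$ (read off from the diagonal of $(\mathtt{D}_{\boldsymbol{\zeta}}\mathtt{M}_{\mathbf{a}})^d=1_k$), which does not pin down $a_0$. Your part~\eqref{th28.2} also implicitly uses $a_0=1$ when declaring the diagonal entries of $\mathtt{D}_{\boldsymbol{\zeta}}\mathtt{M}_{\mathbf{a}}$ to be $1,\zeta,\ldots,\zeta^{k-1}$. The rest of the argument is sound once $a_0=1$ is established; you just cannot dismiss the $p_{jl}$ naturality as trivial.
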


\begin{proof}
For each $0\leq i\leq d-1$, if $(F_i,\Phi^{(i)})$ is an object in $\mathcal{Z}(\mathscr{D})$,
then, by definition, $\Phi^{(i)}$ is a natural isomorphism from the endofunctor
$F_i\circ {}_-$ to the endofunctor ${}_-\circ F_i$ of the category $\mathscr{D}(\mathtt{i},\mathtt{i})$.
By \eqref{e5} and the fact that $F_s= F_1^s$ for any positive integer $s$, the natural isomorphism
$\Phi^{(i)}$ is uniquely determined by the isomorphism
\begin{displaymath}
\Phi^{(i)}(F_1):F_i\circ F_1\to F_1\circ F_i, \end{displaymath}
which lies in $\mathrm{End}_{\mathscr{D}(\mathtt{i},\mathtt{i})}(F_{i+1})$.
As $q_{i+1}^k=0$ and $\Phi^{(i)}(F_1)$ is an isomorphism, we may assume
\begin{displaymath}
\Phi^{(i)}(F_1)=a_0\mathrm{id}_{F_{i+1}}+a_1q_{i+1}+a_2q_{i+1}^2+\cdots+a_{k-1}q_{i+1}^{k-1},
\end{displaymath}
where $a_0\in\mathds{C}^\times$ and $a_i\in\mathds{C}$ for all other $i$.
By~\eqref{e5} and Lemma~\ref{l10}, we have
\begin{equation}\label{s1}
\begin{array}{rcl}
\Phi^{(i)}(F_s)&=&\Phi^{(i)}(\underbrace{F_1\circ F_1\circ \cdots \circ F_1}_{s \text{ times}})\\
&=&(\mathrm{id}_{F_1}\circ_0 \mathrm{id}_{F_1}\circ_0 \cdots \circ_0 \Phi^{(i)}(F_1) )\circ_1(\mathrm{id}_{F_1}\circ_0 \cdots \circ_0\Phi^{(i)}(F_1)\circ_0 \mathrm{id}_{F_1})\circ_1\\
&&\cdots\circ_1 (\Phi^{(i)}(F_1)\circ_0\cdots\circ_0 \mathrm{id}_{F_1}\circ_0 \mathrm{id}_{F_1})\\
&=&(a_0\mathrm{id}_{F_{i+s}}+a_1q_{i+s}+a_2q_{i+s}^2+\cdots+a_{k-1}q_{i+s}^{k-1})\circ_1\\
&&(a_0\mathrm{id}_{F_{i+s}}+a_1\zeta q_{i+s}+a_2\zeta^2 q_{i+s}^2+\cdots+a_{k-1}\zeta^{k-1} q_{i+s}^{k-1})\circ_1\\
&&\cdots\cdots\\
&&(a_0\mathrm{id}_{F_{i+s}}+a_1\zeta^{s-1} q_{i+s}+a_2\zeta^{2(s-1)} q_{i+s}^2+\cdots+a_{k-1}\zeta^{(k-1)(s-1)} q_{i+s}^{k-1}).
\end{array}
\end{equation}

The element $a_0\mathrm{id}_{F_{i+s}}+a_1\zeta^j q_{i+s}+a_2\zeta^{2j} q_{i+s}^2+\cdots+a_{k-1}\zeta^{(k-1)j}
q_{i+s}^{k-1}$
is given in the standard basis by the matrix
\begin{displaymath}
\left(\begin{array}{ccccc}
 a_0&\zeta^ja_1&\zeta^{2j}a_2&\dots& \zeta^{(k-1)j}a_{k-1}\\
 0&a_0&\zeta^ja_1&\dots& \zeta^{(k-2)j}a_{k-2}\\
 0&0&a_0&\dots& \zeta^{(k-3)j}a_{k-3}\\
 \vdots&\vdots&\vdots&\ddots&\vdots\\
 0&0&0&\dots&a_0 \end{array}
\right)
\end{displaymath}
which can be written as $\mathtt{D}_{\boldsymbol{\zeta}}^{-j}\cdot\mathtt{M}_{\mathbf{a}}\cdot
\mathtt{D}_{\boldsymbol{\zeta}}^j$. Therefore $\Phi^{(i)}(F_s)$ can be rewritten as follows:
\begin{equation} \label{ss1}
\mathtt{D}_{\boldsymbol{\zeta}}^{-s}\cdot(\mathtt{D}_{\boldsymbol{\zeta}}\cdot\mathtt{M}_{\mathbf{a}})^s. \end{equation}

Due to naturality of $\Phi^{(i)}$, we have the following commutative diagram
\begin{displaymath}
\xymatrix{F_i\circ F_1\ar[rr]^{\Phi^{(i)}(F_1)}\ar[d]_{\mathrm{id}_{F_i}\circ_0\, q_1}
&&F_1\circ F_i\ar[d]^{q_1\circ_0\, \mathrm{id}_{F_i}}\\
F_i\circ F_1\ar[rr]^{\Phi^{(i)}(F_1)}&&F_1\circ F_i,}
\end{displaymath}
that is, the equation $(q_1\circ_0\, \mathrm{id}_{F_i})\circ_1 \Phi^{(i)}(F_1)=\Phi^{(i)}(F_1)\circ_1(\mathrm{id}_{F_i}\circ_0\, q_1)$ holds. By Lemma~\ref{l10},
the left hand side of this equation is
\begin{displaymath}
(q_1\circ_0\, \mathrm{id}_{F_i})\circ_1 \Phi^{(i)}(F_1)=\zeta^iq_{i+1}\circ_1(a_0\mathrm{id}_{F_{i+1}}+a_1q_{i+1}+a_2q_{i+1}^2+\cdots+a_{k-1}q_{i+1}^{k-1})
\end{displaymath}
and the right hand side  of this equation is
\begin{displaymath}
\Phi^{(i)}(F_1)\circ_1(\mathrm{id}_{F_i}\circ_0\, q_1)=(a_0\mathrm{id}_{F_{i+1}}+a_1q_{i+1}+a_2q_{i+1}^2+\cdots+a_{k-1}q_{i+1}^{k-1})\circ_1 q_{i+1}.
\end{displaymath}
Comparing the coefficients of each term on both sides, we get $a_j=\zeta^i a_j$ for all
$0\leq j\leq k-2$. If $i\neq 0$, then $\zeta^i\neq1$ which implies $a_j=0$ for $0\leq j\leq k-2$.
Therefore for $0<i\leq d-1$ such natural isomorphisms $\Phi^{(i)}$ do not exist. If $i=0$, it is
clear that the left hand side coincides with the right hand side.

Now, consider a pair $(F_0,\Phi^{(0)})$ and assume that
\begin{equation}\label{e2}
\Phi^{(0)}(F_1)=a_0\mathrm{id}_{F_{1}}+a_1q_{1}+a_2q_{1}^2+\cdots+a_{k-1}q_{1}^{k-1}.
\end{equation}
Then, for each $i$, the endomorphism algebra $\mathrm{End}_{\mathscr{D}(\mathtt{i},\mathtt{i})}(F_i)$
is commutative since it is generated by $\mathrm{id}_{F_i}$ and $q_i$. For any morphism
$f\in\mathrm{Hom}_{\mathscr{D}(\mathtt{i},\mathtt{i})}(F_i,F_j)$, by Remark~\ref{r1} and the
fact that $F_t\circ F_{t'}= F_{t+t'}$, the commutativity of the following diagram
\begin{displaymath}
\xymatrix{F_0\circ F_i\ar[rr]^{\Phi^{(0)}(F_i)}\ar[d]_{\mathrm{id}_{F_0}\circ_0f}&&F_i\circ F_0\ar[d]^{f\circ_0\, \mathrm{id}_{F_0}}\\
F_0\circ F_j\ar[rr]^{\Phi^{(0)}(F_j)}&&F_j\circ F_0}
\end{displaymath}
is equivalent to the commutativity of the following diagram
\begin{equation}\label{d1}
\xymatrix{F_i\ar[rr]^{\Phi^{(0)}(F_i)}\ar[d]_{f}&&F_i\ar[d]^{f}\\
F_j\ar[rr]^{\Phi^{(0)}(F_j)}&&F_j}
\end{equation}
If $0\leq i=j\leq d-1$, the diagram~\eqref{d1} commutes as the endomorphism
algebra $\mathrm{End}_{\mathscr{D}(\mathtt{i},\mathtt{i})}(F_i)$ is commutative.
If $0\leq i\neq j\leq d-1$, by Lemma \ref{l7} we know that
$\mathrm{Hom}_{\mathscr{D}(\mathtt{i},\mathtt{i})}(F_i,F_j)=\mathds{C} p_{ij}$.
Using \eqref{e1} and \eqref{s1}, we have
\begin{displaymath}
p_{ij}\circ_1\Phi^{(0)}(F_i)=a_0^ip_{ij} \quad\text{and}\quad \Phi^{(0)}(F_j)\circ_1p_{ij}=a_0^jp_{ij}.
\end{displaymath}
The formula $p_{ij}\circ_1\Phi^{(0)}(F_i)=\Phi^{(0)}(F_j)\circ_1p_{ij}$ implies $a_0^i=a_0^j$
for all $0\leq i\neq j\leq d-1$. Since $a_0\neq 0$, we get $a_0=1$.

By Remark~\ref{re1}\,\eqref{re12} and the fact that $F_1^d= F_0$, we have
\begin{equation}\label{e6}
\Phi^{(0)}(\underbrace{F_1\circ F_1\circ\cdots\circ F_1}_{d\text{ times}})=\Phi^{(0)}(F_0)=\mathrm{id}_{F_{0}},
\end{equation}
due to \eqref{ss1}, which implies equation~\eqref{eqc1}. This completes the proof of claim~\eqref{th28.1}.

To prove claim~\eqref{th28.2}, we note that the diagonal entries of the upper triangular matrix
$\mathtt{D}_{\boldsymbol{\zeta}}\cdot\mathtt{M}_{\mathbf{a}}$ are exactly $1,\zeta,\zeta^{2},\dots,\zeta^{k-1}$.
If $d\geq k$, then all these diagonal entries are different and hence the matrix has a simple spectrum.
Since each diagonal entry (=eigenvalue) is a $d$-th root of unity, it follows that equation~\eqref{eqc1}
is an identity, for all $\mathbf{a}$. Similarly,
if $\mathtt{D}_{\boldsymbol{\zeta}}\cdot\mathtt{M}_{\mathbf{a}}$ is
diagonalizable (for any $d$ and $k$), then equation~\eqref{eqc1} is an identity. While, if
$\mathtt{D}_{\boldsymbol{\zeta}}\cdot\mathtt{M}_{\mathbf{a}}$ is not
diagonalizable, then equation~\eqref{eqc1} cannot hold. This proves claim~\eqref{th28.2}.

It remains to prove claim~\eqref{th28.3}.
Let $(F_0,\Psi^{(0)})$ be another object in $\mathcal{Z}(\mathscr{D})$, then we may assume
\begin{equation}
\Psi^{(0)}(F_1)=\mathrm{id}_{F_{1}}+a_1'q_{1}+a_2'q_{1}^2+\cdots+a_{k-1}'q_{1}^{k-1},
\end{equation}
where $\mathbf{a}_{\Psi}:=(1,a_1',a_2',\ldots,a_{k-1}')\in\mathds{C}^k$.
Now we consider the homomorphism space from $(F_0,\Phi^{(0)})$ to $(F_0,\Psi^{(0)})$ which is a subspace of $\mathrm{End}_{\mathscr{D}}(F_0)$ by definition.
If $f$ lies in this homomorphism space, assuming $f$ has the form~\eqref{ee1},  then it just need to satisfy
\begin{displaymath}
(\mathrm{id}_{F_1}\circ_0 f)\circ_1\Phi^{(0)}(F_1)=\Psi^{(0)}(F_1)\circ_1(f\circ_0 \mathrm{id}_{F_1}).
\end{displaymath}
Indeed, if this equation holds, then~\eqref{2c} automatically holds for all $F_i$ by~\eqref{s1}
and thus for any $1$-morphism $H$.
Using the matrix language and by Lemma~\ref{l10}, the above equation
is equivalent to
\begin{displaymath}
\mathtt{M}_{\mathbf{a}_{\Phi}}\mathtt{M}_{\mathbf{l}}=
\mathtt{D}_{\boldsymbol{\zeta}}^{-1} \mathtt{M}_{\mathbf{l}}\mathtt{D}_{\boldsymbol{\zeta}}
\mathtt{M}_{\mathbf{a}_{\Psi}},
\end{displaymath}
and hence to equation~\eqref{eqc2}. This completes the proof.
\end{proof}

\subsection{First example: $k=d=2$}
By Theorem~\ref{th28}, we see that indecomposable objects in the category $\mathcal{Z}(\mathscr{D})$ are
of the form $(F_0,\Phi^{(0)})$, up to isomorphism, where
\begin{displaymath}
\Phi^{(0)}(F_1):=\mathrm{id}_{F_1}+aq_1\in \mathrm{End}_{\mathscr{D}(\mathtt{i},\mathtt{i})}(F_1),\quad
\text{for }a\in \mathds{C}.
\end{displaymath}
Moreover, for any two indecomposable objects $(F_0,\Phi^{(0)})$ and $(F_0,\Psi^{(0)})$, the
corresponding homomorphism space is explicitly given by
\begin{equation}\label{eqnn32-2}
\mathrm{Hom}_{\mathcal{Z}(\mathscr{D})}((F_0,\Phi^{(0)}),(F_0,\Psi^{(0)}))\cong
\mathds{C}(\mathrm{id}_{F_0}+\frac{b-a}{2}q_0),
\end{equation}
where $\Phi^{(0)}(F_1):=\mathrm{id}_{F_1}+aq_1$ and $\Psi^{(0)}(F_1):=\mathrm{id}_{F_1}+bq_1$ with $a,b\in\mathds{C}$.

Indeed, in this case we have $\zeta=-1$. For any
$f\in\mathrm{Hom}_{\mathcal{Z}(\mathscr{D})}((F_0,\Phi^{(0)}),(F_0,\Psi^{(0)})$, we may assume
$f=l_0\,\mathrm{id}_{F_0}+l_1q_0$.
The condition~\eqref{eqc2} turns to
\begin{displaymath}
\left(\begin{array}{cc} 1 & a  \\ 0 & -1
\end{array}\right)
\left(\begin{array}{cc} l_0 & l_1  \\ 0 & l_0
\end{array}\right)=
\left(\begin{array}{cc} l_0 & l_1  \\ 0 & l_0
\end{array}\right)
\left(\begin{array}{cc} 1 & b  \\ 0 & -1
\end{array}\right).
\end{displaymath}
Thus we get $l_1=(b-a)l_0/2$ implying \eqref{eqnn32-2}.

\subsection{Second example: condition~\eqref{eqc1} for $d=2$}
We would like to discuss the condition~\eqref{eqc1} in the case $d=2$,
which reads
\begin{equation}\label{eqnn32}
\left(\begin{array}{ccccc}
1 & a_1 & a_2 & \dots & a_{k-1} \\
0 & -1 & -a_1 & \dots &
-a_{k-2}\\0 & 0 & 1 &  \dots& a_{k-3}\\
\vdots& \vdots & \vdots & \ddots & \vdots\\0& 0 &0  &\dots & (-1)^{k-1}
\end{array}\right)^2=1_k.
\end{equation}

For any integer $1\leq j\leq k-1$, the coefficient on the upper $j$-th diagonal
of the left hand side is $\displaystyle \sum_{i=0}^{j}(-1)^{i}a_ia_{j-i}$, which
must coincide  with the corresponding coefficient $0$ on the right hand side.
If $j$ is odd, then either $i$ or $j-i$ is odd for $0\leq i\leq j$. Thus the terms
$(-1)^ia_ia_{j-i}$ and $(-1)^{j-i}a_{j-i}a_i$ have different signs. Therefore the
sum $\displaystyle \sum_{i=0}^{j}(-1)^{i}a_ia_{j-i}$ always equals zero for odd $j$.
Therefore each parameter $a_{j}$ can be chosen freely for all odd $j$.
For even $j'$, the corresponding parameter $a_{j'}$ is then uniquely
determined by \eqref{eqnn32} and the choice of all parameters $a_{j}$ for odd $j$.

Finally we give a description of the whole Drinfeld center $\mathcal{Z}(\mathscr{D})$.

\begin{theorem}\label{th28'}
{\hspace{2mm}}

\begin{enumerate}[$($i$)$]
\item \label{th28'.1}
Objects of the category $\mathcal{Z}(\mathscr{D})$ are, up to isomorphism, pairs
$(F,\Phi)$, where $F$ is the direct sum of $s$ copies of $F_0$ for any positive integer $s$ and $\Phi$
is given in the standard bases by
\begin{displaymath}
\Phi(F_1):=\mathtt{M}_{\Phi}=\left(
\begin{array}{ccccc}
\mathtt{M}_{\mathbf{a}_{11}}&\mathtt{M}_{\mathbf{a}_{12}}&\mathtt{M}_{\mathbf{a}_{13}}&\dots&\mathtt{M}_{\mathbf{a}_{1s}}\\
\mathtt{M}_{\mathbf{a}_{21}}&\mathtt{M}_{\mathbf{a}_{22}}&\mathtt{M}_{\mathbf{a}_{23}}&\dots&\mathtt{M}_{\mathbf{a}_{2s}}\\
\mathtt{M}_{\mathbf{a}_{31}}&\mathtt{M}_{\mathbf{a}_{32}}&\mathtt{M}_{\mathbf{a}_{33}}&\dots&\mathtt{M}_{\mathbf{a}_{3s}}\\
\vdots&\vdots&\vdots&\ddots&\vdots\\
\mathtt{M}_{\mathbf{a}_{s1}}&\mathtt{M}_{\mathbf{a}_{s2}}&\mathtt{M}_{\mathbf{a}_{s3}}&\dots&\mathtt{M}_{\mathbf{a}_{ss}}
\end{array}
\right)\in \mathrm{End}_{\mathscr{D}(\mathtt{i},\mathtt{i})}(F),
\end{displaymath}
where $\mathbf{a}_{pp}=(1, a_1^{pp},a_2^{pp},\ldots, a_{k-1}^{pp})\in \mathds{C}^k$ and $\mathbf{a}_{pq}=(0, a_1^{pq},a_2^{pq},\ldots, a_{k-1}^{pq})\in \mathds{C}^k$ for all $1\leq p\neq q \leq s$, such that
\begin{equation}\label{eqc1'}
(\mathtt{D}_{s}\cdot\mathtt{M}_{\Phi})^d=1_{sk},
\end{equation}
where
\begin{equation}\label{di1}
\mathtt{D}_{s}=\left(
\begin{array}{ccccc}
\mathtt{D}_{\boldsymbol{\zeta}}&0&0&\dots&0\\
0&\mathtt{D}_{\boldsymbol{\zeta}}&0&\dots&0\\
0&0&\mathtt{D}_{\boldsymbol{\zeta}}&\dots&0\\
\vdots&\vdots&\vdots&\ddots&\vdots\\
0&0&0&\dots&\mathtt{D}_{\boldsymbol{\zeta}}
\end{array}
\right)_{sk\times sk}.
\end{equation}
\item \label{th28'.2}
The condition~\eqref{eqc1'} is equivalent to the fact that $\mathtt{D}_{s}\cdot\mathtt{M}_{\Phi}$ is diagonalizable.
\item \label{th28'.3}
For any two objects $(F,\Phi),(G,\Psi)$ in $\mathcal{Z}(\mathscr{D})$, where
\begin{displaymath}
F:=\underbrace{F_0\oplus F_0\oplus \cdots \oplus F_0}_{s \text{ times}} \quad\text{and}\quad G:=\underbrace{F_0\oplus F_0\oplus \cdots \oplus F_0}_{t \text{ times}}\quad\text{for some } s,t \in \mathbb{Z}_{>0},
\end{displaymath}
 the corresponding morphism space $\mathrm{Hom}_{\mathcal{Z}(\mathscr{D})}((F,\Phi),(G,\Psi))$ consists of elements
 given in the standard bases by the following matrices:
\begin{equation}\label{ee1'}
\mathtt{M}_f=\left(
\begin{array}{ccccc}
\mathtt{M}_{\mathbf{l}_{11}}&\mathtt{M}_{\mathbf{l}_{12}}&\mathtt{M}_{\mathbf{l}_{13}}&\dots&\mathtt{M}_{\mathbf{l}_{1t}}\\
\mathtt{M}_{\mathbf{l}_{21}}&\mathtt{M}_{\mathbf{l}_{22}}&\mathtt{M}_{\mathbf{l}_{23}}&\dots&\mathtt{M}_{\mathbf{l}_{2t}}\\
\mathtt{M}_{\mathbf{l}_{31}}&\mathtt{M}_{\mathbf{l}_{32}}&\mathtt{M}_{\mathbf{l}_{33}}&\dots&\mathtt{M}_{\mathbf{l}_{3t}}\\
\vdots&\vdots&\vdots&\ddots&\vdots\\
\mathtt{M}_{\mathbf{l}_{s1}}&\mathtt{M}_{\mathbf{l}_{s2}}&\mathtt{M}_{\mathbf{l}_{s3}}&\dots&\mathtt{M}_{\mathbf{l}_{st}}
\end{array}
\right)
\end{equation}
for some $\mathbf{l}_{pq}:=(l_0^{pq}, l_1^{pq}, l_2^{pq},\ldots, l_{k-1}^{pq})\in \mathds{C}^k$, where $1\leq p\leq s, 1\leq q\leq t$, such that
\begin{equation}\label{eqc2'}
\mathtt{D}_{s}\mathtt{M}_{\Phi}\mathtt{M}_{f}=
\mathtt{M}_{f}\mathtt{D}_{t}\mathtt{M}_{\Psi}.
\end{equation}
\end{enumerate}
\end{theorem}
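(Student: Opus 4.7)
The plan is to extend the analysis of Theorem~\ref{th28} to accommodate multiplicities in the first component. Unlike the setting of Theorem~\ref{thm1}, a reduction to the indecomposable case via Proposition~\ref{prop} is \emph{not} available here, since indecomposable objects of $\mathcal{Z}(\mathscr{D})$ may well have underlying $1$-morphism of the form $F_0^{\oplus s}$ with $s>1$, as noted in the example following Remark~\ref{re1}. The first move is thus to pin down the shape of $F$ directly; afterwards the analysis proceeds by matrix calculations in the multiplicity space $M_s(D)$, mirroring Theorem~\ref{th28}.

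Write $F\cong\bigoplus_{i=0}^{d-1}F_i^{\oplus s_i}$, so that both $F\circ F_1$ and $F_1\circ F$ decompose as $\bigoplus_i F_{i+1}^{\oplus s_i}$, and view $\Phi(F_1)$ as a block matrix whose $(i,j)$-block lies in $\mathrm{Hom}(F_{j+1},F_{i+1})^{s_i\times s_j}$. Apply naturality of $\Phi$ to $q_1:F_1\to F_1$. Using Lemma~\ref{l10} (in particular $\mathrm{id}_{F_i}\circ_0 q_1 = q_{i+1}$ and $q_1\circ_0\mathrm{id}_{F_i}=\zeta^i q_{i+1}$), one obtains $\Phi(F_1)_{ij}\,q_{j+1}=\zeta^i q_{i+1}\,\Phi(F_1)_{ij}$ entrywise. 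For the diagonal block $(i,i)$ with $i\not\equiv 0\pmod d$, commutativity of $D$ forces $(1-\zeta^i)\,q_{i+1}\,\Phi(F_1)_{ii}=0$, so each entry of $\Phi(F_1)_{ii}$ belongs to the annihilator $\mathds{C}\,q_{i+1}^{k-1}$; in particular the coefficient of $\mathrm{id}_{F_{i+1}}$ vanishes. Passing to the semisimple quotient $\prod_i M_{s_i}(\mathds{C})$ of $\mathrm{End}(F_1\circ F)$, the off-diagonal blocks (being multiples of the nilpotent morphism $p_{j+1,i+1}$) die, and the $(i,i)$-block with $i\neq 0$ also dies, so invertibility of $\Phi(F_1)$ forces $s_i=0$ for every $i\neq 0$. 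Hence $F\cong F_0^{\oplus s}$ with $s:=s_0$. Next, apply naturality of $\Phi$ to each $p_{0,j}:F_0\to F_j$ with $j\neq 0$; together with $\Phi(F_0)=\mathrm{id}_F$ from Remark~\ref{re1}\,\eqref{re12} and the annihilation $p_{0j}\cdot q_1=0$, this forces the leading coefficient matrix of each $\Phi(F_j)$ to equal $1_s$. By the matrix version of formula~\eqref{s1}, this leading coefficient is the $j$-th power of the leading coefficient of $\Phi(F_1)$; the case $j=1$ therefore gives $a_0^{pp}=1$ and $a_0^{pq}=0$ for $p\neq q$, which is the asserted shape of $\mathtt{M}_\Phi$. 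Condition~\eqref{eqc1'} then follows from $\Phi(F_d)=\Phi(F_0)=\mathrm{id}_F$ via the matrix analogue $\Phi(F_d)=\mathtt{D}_s^{-d}(\mathtt{D}_s\mathtt{M}_\Phi)^d$ of~\eqref{ss1}.

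For part~\eqref{th28'.2}, a reordering of the standard basis that groups the $p$-th ``level'' vectors of all $s$ copies of $F_1$ before the $(p{+}1)$-st level transforms $\mathtt{M}_\Phi$ into a $k\times k$ block upper triangular matrix with all diagonal $s\times s$ blocks equal to $1_s$, while $\mathtt{D}_s$ becomes $\mathrm{diag}(1_s,\zeta 1_s,\ldots,\zeta^{k-1}1_s)$. Consequently $\mathtt{D}_s\mathtt{M}_\Phi$ is block upper triangular with eigenvalues $1,\zeta,\ldots,\zeta^{k-1}$, each of multiplicity $s$, all being $d$-th roots of unity; the identity $(\mathtt{D}_s\mathtt{M}_\Phi)^d=1_{sk}$ is thus equivalent to the minimal polynomial of $\mathtt{D}_s\mathtt{M}_\Phi$ dividing the separable polynomial $x^d-1$, that is, to diagonalizability. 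For part~\eqref{th28'.3}, a morphism $f:(F,\Phi)\to(G,\Psi)$ lives in $\mathrm{Hom}(F_0^{\oplus s},F_0^{\oplus t})\cong M_{s\times t}(D)$ and has, in the standard basis of $F_0$, a matrix $\mathtt{M}_f$ of the asserted form. Imposing the compatibility~\eqref{2c} for $K=F_1$ and expanding $f\circ_0\mathrm{id}_{F_1}$ via Lemma~\ref{l10} (which gives $q_0^i\circ_0\mathrm{id}_{F_1}=\zeta^i q_1^i$, i.e., conjugation by $\mathtt{D}_{\boldsymbol{\zeta}}$ block-wise) and comparing with $\mathrm{id}_{F_1}\circ_0 f$ yields precisely~\eqref{eqc2'}; that \eqref{2c} for $K=F_1$ alone suffices follows, as in the proof of Theorem~\ref{th28}, from $F_1^d=F_0$ and the iterated formula~\eqref{s1}.

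The main obstacle is the first step, namely ruling out the appearance of $F_i$-summands with $i\neq 0$ in $F$. The subtlety is that Proposition~\ref{prop} does not directly apply, so one must rely on the semisimple-quotient argument combined with the specific nilpotent structure of $\mathrm{End}(F_{i+1})\cong D$ (namely $\mathrm{Ann}_D(q_{i+1})=\mathds{C}\,q_{i+1}^{k-1}$) and the nilpotent nature of the inter-class Hom-spaces $\mathds{C}\,p_{j+1,i+1}$. Once $F\cong F_0^{\oplus s}$ is established, the remaining assertions become straightforward block-matrix refinements of the corresponding steps in the proof of Theorem~\ref{th28}.
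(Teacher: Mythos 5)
Your proof is correct and arrives at the same conclusions, but the route to $F\cong F_0^{\oplus s}$ is genuinely different from the paper's. The paper first applies naturality with respect to $p_{10}$ to derive $a_0^{pp}=\zeta^{-i_p(k-1)}\neq 0$ together with $a_0^{pq}=0$ (this is the paper's equation \eqref{bi3}), and then applies naturality with respect to $q_1$ to obtain $a_0^{pp}(\zeta^{i_p}-1)=0$; the two constraints together force $\zeta^{i_p}=1$ for every $p$. You instead invoke only $q_1$-naturality, observe that it drives the scalar part of the $(i,i)$-block into $\mathds{C}\,q_{i+1}^{k-1}$ (hence to zero) whenever $\zeta^i\neq 1$, and then use the semisimple quotient $\prod_i M_{s_i}(\mathds{C})$ of $\mathrm{End}(F_1\circ F)$ together with invertibility of $\Phi(F_1)$ to rule out the unwanted summands; only afterwards do you use $p_{01}$-naturality and $\Phi(F_0)=\mathrm{id}_F$ to pin the leading-coefficient matrix down to $1_s$. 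Both routes are valid; yours is more conceptual (the semisimple-quotient argument cleanly encodes the ``what must an invertible element look like'' intuition), while the paper's is more explicitly computational. Your preliminary remark that Proposition~\ref{prop} is \emph{not} available here is accurate and worth noting, although the paper's proof already avoids it. For part (ii), your basis-reordering argument making $\mathtt{D}_s\mathtt{M}_\Phi$ visibly block upper triangular with constant scalar diagonal blocks $\zeta^{m}1_s$ is a welcome clarification of the paper's somewhat terse claim about eigenvalue multiplicities. Part (iii) follows the paper's approach. One minor point: the phrase ``the annihilation $p_{0j}\cdot q_1=0$'' should rather read $p_{0j}\cdot q_0=0$ (from \eqref{e1}), which is the relevant relation ensuring that composing with $p_{0j}$ extracts the leading coefficient; this is a slip of notation, not a gap.
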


\begin{proof}
Let $(F,\Phi)$ be an object in $\mathcal{Z}(\mathscr{D})$. Assume that
\begin{displaymath}
F:=F_{i_1}\oplus F_{i_2}\oplus \cdots \oplus F_{i_s}
\end{displaymath}
where $0\leq i_1\leq i_2\leq\ldots\leq i_s\leq d-1$ and  $s $ is a positive integer.
Then, by definition, $\Phi$ is a natural isomorphism from the endofunctor
$F\circ {}_-$ to the endofunctor ${}_-\circ F$ of the category $\mathscr{D}(\mathtt{i},\mathtt{i})$.
By \eqref{e5} and the fact that $F_i= F_1^i$, the natural isomorphism
$\Phi$ is uniquely determined by the isomorphism $\Phi(F_1)$ in
$\mathrm{End}_{\mathscr{D}(\mathtt{i},\mathtt{i})}(F_{i_1+1}\oplus F_{i_2+1}\oplus \cdots \oplus F_{i_s+1})$.

With respect to the standard basis, we may assume that $\Phi(F_1)$ is given by
\begin{displaymath}
\mathtt{M}_{\Phi}=\left(
\begin{array}{ccccc}
\mathtt{M}_{\mathbf{a}_{11}}&\mathtt{M}_{\mathbf{a}_{12}}&\mathtt{M}_{\mathbf{a}_{13}}&\dots&\mathtt{M}_{\mathbf{a}_{1s}}\\
\mathtt{M}_{\mathbf{a}_{21}}&\mathtt{M}_{\mathbf{a}_{22}}&\mathtt{M}_{\mathbf{a}_{23}}&\dots&\mathtt{M}_{\mathbf{a}_{2s}}\\
\mathtt{M}_{\mathbf{a}_{31}}&\mathtt{M}_{\mathbf{a}_{32}}&\mathtt{M}_{\mathbf{a}_{33}}&\dots&\mathtt{M}_{\mathbf{a}_{3s}}\\
\vdots&\vdots&\vdots&\ddots&\vdots\\
\mathtt{M}_{\mathbf{a}_{s1}}&\mathtt{M}_{\mathbf{a}_{s2}}&\mathtt{M}_{\mathbf{a}_{s3}}&\dots&\mathtt{M}_{\mathbf{a}_{ss}}
\end{array}
\right)
\end{displaymath}
where $\mathbf{a}_{pq}=(a_0^{pq}, a_1^{pq},a_2^{pq},\ldots, a_{k-1}^{pq})\in \mathds{C}^k$ for all $1\leq p,q \leq s$.

By~\eqref{e5} and Lemma~\ref{l10}, similarly to the proof of Theorem~\ref{th28}, for any positive integer $i$ we can write $\Phi(F_i)$ as follows:
\begin{equation} \label{33s}
\mathtt{D}_{s}^{-i}\cdot(\mathtt{D}_{s}\cdot \mathtt{M}_{\Phi})^i,
 \end{equation}
where $\mathtt{D}_s$ is given by equation~\eqref{di1}.

Applying the naturality of $\Phi$ to the $2$-morphism $p_{ij}$, we have the following commutative diagram
\begin{equation}\label{cm1}
\xymatrix{F_{i_1+i}\oplus F_{i_2+i}\oplus \cdots \oplus F_{i_s+i}
\ar[rr]^{\Phi(F_i)}\ar[d]_{\mathrm{id}_{F}\circ_0\, p_{ij}}
&&F_{i_1+i}\oplus F_{i_2+i}\oplus \cdots \oplus F_{i_s+i}\ar[d]^{p_{ij}\circ_0\,
\mathrm{id}_{F}}\\
F_{i_1+j}\oplus F_{i_2+j}\oplus \cdots \oplus F_{i_s+j}\ar[rr]^{\Phi(F_j)}&&
F_{i_1+j}\oplus F_{i_2+j}\oplus \cdots \oplus F_{i_s+j}}
\end{equation}
Due to~\eqref{t2} and~\eqref{33s}, the matrix translation of this commutative diagram is
\begin{multline}\label{di2}
\mathtt{D}_{s}^{-i}\cdot(\mathtt{D}_{s}\cdot \mathtt{M}_{\Phi})^i \cdot
\left(
\begin{array}{ccccc}
\zeta^{i_1(k-1)}\mathtt{M}_{\underline{\boldsymbol{1}}_{k-1}}&0&0&\dots&0\\
0&\zeta^{i_2(k-1)}\mathtt{M}_{\underline{\boldsymbol{1}}_{k-1}}&0&\dots&0\\
0&0&\zeta^{i_3(k-1)}\mathtt{M}_{\underline{\boldsymbol{1}}_{k-1}}&\dots&0\\
\vdots&\vdots&\vdots&\ddots&\vdots\\
0&0&0&\dots&\zeta^{i_s(k-1)}\mathtt{M}_{\underline{\boldsymbol{1}}_{k-1}}
\end{array}
\right)_{sk\times sk}  \\
=\left(
 \begin{array}{ccccc}
 \mathtt{M}_{\underline{\boldsymbol{1}}_{k-1}}&0&0&\dots&0\\
 0&\mathtt{M}_{\underline{\boldsymbol{1}}_{k-1}}&0&\dots&0\\
 0&0&\mathtt{M}_{\underline{\boldsymbol{1}}_{k-1}}&\dots&0\\
 \vdots&\vdots&\vdots&\ddots&\vdots\\
 0&0&0&\dots&\mathtt{M}_{\underline{\boldsymbol{1}}_{k-1}}
 \end{array}
 \right)_{sk\times sk}\cdot \mathtt{D}_{s}^{-j}\cdot(\mathtt{D}_{s}\cdot \mathtt{M}_{\Phi})^j
\end{multline}
where $\underline{\boldsymbol{1}}_{k-1}=(0,0,\ldots,0,1)$.

When $i=1, j=0$,  equation~\eqref{di2} implies
\begin{equation}
a_0^{pp}=\zeta^{-i_p(k-1)}\neq 0  \text{ for all } 1\leq p\leq s \quad\text{ and }\quad
a_0^{pq}=0 \text{ for all } 1\leq p\neq q\leq s.\label{bi3}
\end{equation}

Applying the naturality of $\Phi$ to the $2$-morphism $q_1$, we have the following commutative diagram
\begin{equation}\label{cm2}
\xymatrix{F_{i_1+1}\oplus F_{i_2+1}\oplus \cdots \oplus F_{i_s+1}
\ar[rr]^{\Phi(F_1)}\ar[d]_{\mathrm{id}_{F}\circ_0\, q_{1}}
&&F_{i_1+1}\oplus F_{i_2+1}\oplus \cdots \oplus F_{i_s+1}\ar[d]^{q_{1}\circ_0\,
\mathrm{id}_{F}}\\
F_{i_1+1}\oplus F_{i_2+1}\oplus \cdots \oplus F_{i_s+1}\ar[rr]^{\Phi(F_1)}&&
F_{i_1+1}\oplus F_{i_2+1}\oplus \cdots \oplus F_{i_s+1}}
\end{equation}
Using~\eqref{t2} and~\eqref{33s}, this gives
\begin{displaymath}
 \mathtt{M}_{\Phi}\cdot
 \left(
 \begin{array}{ccccc}
 \zeta^{i_1}\mathtt{M}_{\underline{\boldsymbol{1}}_{1}}&0&0&\dots&0\\
 0&\zeta^{i_2}\mathtt{M}_{\underline{\boldsymbol{1}}_{1}}&0&\dots&0\\
 0&0&\zeta^{i_3}\mathtt{M}_{\underline{\boldsymbol{1}}_{1}}&\dots&0\\
 \vdots&\vdots&\vdots&\ddots&\vdots\\
 0&0&0&\dots&\zeta^{i_s}\mathtt{M}_{\underline{\boldsymbol{1}}_{1}}
 \end{array}
 \right)_{sk\times sk}=
 \left(
 \begin{array}{ccccc}
 \mathtt{M}_{\underline{\boldsymbol{1}}_{1}}&0&0&\dots&0\\
 0&\mathtt{M}_{\underline{\boldsymbol{1}}_{1}}&0&\dots&0\\
 0&0&\mathtt{M}_{\underline{\boldsymbol{1}}_{1}}&\dots&0\\
 \vdots&\vdots&\vdots&\ddots&\vdots\\
 0&0&0&\dots&\mathtt{M}_{\underline{\boldsymbol{1}}_{1}}
 \end{array}
 \right)_{sk\times sk}\cdot \mathtt{M}_{\Phi}
 \end{displaymath}
 where $\underline{\boldsymbol{1}}_{1}=(0,1,0,\ldots,0,0)$. The above equation yields
 \begin{displaymath}
 a_t^{pq}\zeta^{i_q}=a_t^{pq} \text{ for all }1\leq p,q \leq s,\ 0\leq t\leq k-2.
 \end{displaymath}
 When $p=q$, using~\eqref{bi3}, we obtain $\zeta^{i_p}=1$ for all $1\leq p\leq s$ and hence $d|i_p$.
 Therefore we get $F_{i_p}=F_0$ for all $1\leq p\leq s$ and therefore $F=\underbrace{F_0\oplus
 F_0\oplus\cdots \oplus F_0}_{s \text{ times}}$. Moreover, it follows from~\eqref{bi3} that $a_0^{pp}=1$
for all $1\leq p\leq s$ and thus equation~\eqref{di2} holds automatically.
In fact, by~\eqref{e5}, we see that the naturality of $\Phi$ follows from the two
commutative diagrams~\eqref{cm1} and~\eqref{cm2}.

Note that $F_d=F_0$. Thus we have $\Phi(F_d)=\Phi(F_0)=\mathrm{id}_{F}$
due to~\eqref{33s}, that is, we have \eqref{eqc1'}. The latter identity
also implies invertibility of $\mathtt{M}_{\Phi}$ and completes the proof of claim~\eqref{th28'.1}.

Since the algebra of upper triangular matrixes with zero diagonal is nilpotent,
the matrix $\mathtt{M}_{\Phi}-1$ is nilpotent.
This means that $1$ is the only eigenvalue of $\mathtt{M}_{\Phi}$.
Thus all eigenvalues of $\mathtt{D}_{s}\cdot \mathtt{M}_{\Phi}$ are $1,
\zeta,\zeta^{2},\ldots,\zeta^{k-1}$,
each of which has multiplicity $s$. This implies claim~\eqref{th28'.2}.

It remains to prove claim~\eqref{th28'.3}.
Let $(G,\Psi)$ be another object in $\mathcal{Z}(\mathscr{D})$,
where $G$ is the direct sum of $t\in\mathbbm{Z}_{>0}$ copies of $F_0$.
Now we consider the homomorphism space $\mathrm{Hom}_{\mathcal{Z}(\mathscr{D})}((F,\Phi),(G,\Psi))$
which is a subspace of $\mathrm{Hom}_{\mathscr{D}}(F,G)$ by definition.
If $f$ lies in this homomorphism space, we may assume that $f$ has the form~\eqref{ee1'}.
Then it is sufficient to prove
\begin{displaymath}
  \xymatrix{{\overbrace{F_{1}\oplus F_{1}\oplus \cdots \oplus F_{1}}^{s \text{ times}}}
  \ar[rr]^{\Phi(F_1)}\ar[d]_{f\circ_0\, \mathrm{id}_{F_{1}}}
  &&{\overbrace{F_{1}\oplus F_{1}\oplus \cdots \oplus F_{1}}^{s \text{ times}}} \ar[d]^{\mathrm{id}_{F_1}\circ_0\,
  f}\\
  {\underbrace{F_{1}\oplus F_{1}\oplus \cdots \oplus F_{1}}_{t \text{ times}}}
  \ar[rr]^{\Psi(F_1)}&& {\underbrace{F_{1}\oplus F_{1}\oplus \cdots \oplus F_{1}}_{t \text{ times}}}}
 \end{displaymath}
 If this diagram commutes, then~\eqref{2c} automatically holds for all $F_i$ by~\eqref{e5}
 and thus for any $1$-morphism $H$.
 Using the matrix language and by Lemma~\ref{l10}, the above commutative diagram
 is equivalent to
 \begin{displaymath}
\mathtt{M}_{\Phi}\mathtt{M}_{f}=
\mathtt{D}_{s}^{-1}\mathtt{M}_{f}\mathtt{D}_{t}\mathtt{M}_{\Psi},
\end{displaymath}
and hence to equation~\eqref{eqc2'}. This completes the proof.
\end{proof}

\noindent
Department of Mathematics, East China Normal University, Minhang District,
Dong Chuan Road 500, Shanghai, 200241, PR China,
E-mail address: {\tt scropure\symbol{64}126.com}

\end{document}